\documentclass[11pt,reqno]{amsart}

\usepackage{amssymb,amsthm, amsmath,amstext,amsfonts,amscd,xcolor}
\usepackage{mathtools}
\usepackage{dsfont}
\usepackage{enumitem}
\usepackage[ansinew]{inputenc}
\usepackage{graphicx}
\usepackage[mathscr]{eucal}
\usepackage[colorlinks, 
    linkcolor={black!10!red},
	citecolor={black!20!blue},
	urlcolor={black!30!blue}
]{hyperref}
\usepackage[margin=1.5in]{geometry}

\newtheorem{theorem}{Theorem}[section]
\newtheorem*{Theorem*}{Theorem 1.1}
\newtheorem{Lemma}{Lemma}[section]
\newtheorem{Proposition}{Proposition}[section]
\newtheorem{Corollary}{Corollary}[section]
\newtheorem{remark}{Remark}[section]
\newtheorem{definition}{Definition}[section]
\newtheorem{Claim}{Claim}
\numberwithin{equation}{section}

\begin{document}
	
	\title[Arithmetic $k$-PDL for $k$ Q-P Long-Range Operators on $\mathbb{Z}^d$]{Arithmetic $k$-Polynomial Dynamical Localization for $k$ Power-Law Quasi-Periodic Long-Range Operators on $\ell^2(\mathbb{Z}^d)$}
	
	\author[Ao Cai]{Ao Cai}
	\address[Ao Cai]{
		School of Mathematical Sciences, Soochow University, Suzhou, Jiangsu, China.
	}
	\email{acai@suda.edu.cn}
	
	\author[Huihui Lv]{Huihui Lv}
	\address[Huihui Lv]{
		School of Mathematical Sciences, Soochow University, Suzhou, Jiangsu, China.
	}
	\email{20254007009@stu.suda.edu.cn}
	
	\author[Yuan Shan]{Yuan Shan}
	\address[Yuan Shan]{
		Department of Mathematics, Nanjing Audit University, Nanjing, China.
	}
	\email{shanyuan@nau.edu.cn}
	
	\author[Xueyin Wang]{Xueyin Wang}
	\address[Xueyin Wang]{
		Department of Mathematics, Texas A\&M University, College Station, TX, 77843, USA.
	}
	\email{xueyin@tamu.edu}

	\begin{abstract}
	We establish a criterion of arithmetic $k$-polynomial spectral localization and $k$-polynomial dynamical localization in expectation for quasi-periodic long-range operators on $\ell^2(\mathbb{Z}^{d})$ with power-law hopping based on the quantitative $C^{k}$-reducibility of the dual Schr\"odinger cocycle. As the application, we prove both localization properties for power-law long-range perturbations of the Almost Mathieu Operators with sufficiently large couplings and Diophantine frequencies. 
    \end{abstract}
	
	\maketitle
	
	\section{Introduction}
    In this paper, we study spectral and dynamical localization for the quasi-periodic long-range operator on $\ell^{2}(\mathbb{Z}^{d})$, 
	\begin{equation}\label{longrange}
		(L_{\lambda,V,\alpha,\theta} \psi)(n) = \sum_{m \in \mathbb{Z}^{d}} \widehat{V}_{m} \psi(n - m) + 2 \lambda\cos 2\pi(\theta + \langle n, \alpha \rangle) \psi(n), \quad n \in \mathbb{Z}^d, 
	\end{equation}
	where $\lambda>0$ is the coupling constant, $\widehat{V}_{m}$ denotes the $m$-th Fourier coefficient of $V \in C^{k}(\mathbb{T}^{d}, \mathbb{R})$, $\theta \in \mathbb{T}$ is the phase, and $\alpha \in \mathbb{T}^{d}$ is a frequency vector such that $(1, \alpha)$ is rationally independent. In particular, when $V(x)=2\cos 2\pi x$ and $d = 1$, the operator \eqref{longrange} reduces to the well-known Almost Mathieu Operator (AMO), 
	\begin{equation}\label{amo}
		(H_{\lambda,\cos,\alpha,\theta}\psi)(n)=\psi(n+1)+\psi(n-1)+2\lambda \cos 2\pi(\theta+n\alpha)\psi(n). 
	\end{equation}
    In the light of Aubry duality, $\{\lambda^{-1}L_{\lambda,V,\alpha,\theta}\}_{\theta\in\mathbb{T}}$ is dual to the following multi-frequency quasi-periodic Schr\"odinger operators $\{H_{\lambda^{-1},V,\alpha,x}\}_{x \in \mathbb{T}^{d}}$, 
	\begin{equation*}
		(H_{\lambda^{-1},V,\alpha,x}u)(n)=u(n+1)+u(n-1)+ \lambda^{-1}V(x+n\alpha)u(n), \quad n\in\mathbb{Z}, 
	\end{equation*}
	where $x\in\mathbb{T}^{d}$. For simplicity, we set $\lambda = 1$ and omit the dependence on $\lambda$ when no confusion arises. 
    
    Let $\{\delta_{n}\}_{n \in \mathbb{Z}^{d}}$ be the standard orthonormal basis of $\ell^{2}(\mathbb{Z}^{d})$. For a finitely supported initial state $\phi$ with $\|\phi\|_{\ell^{2}}=1$, define $\psi(t)=e^{-itL_{V,\alpha,\theta}}\phi$. To describe the dynamical behavior of $\psi(n,t)$, we use the position operator defined by
	\begin{equation*}
		(X \psi)(n) = n \psi(n).
	\end{equation*}
	The $p$-th moment ($p > 0$) of the position operator at time $t$ is then given by
	\begin{equation}\label{pmom}
		\langle \psi(t), |X|^{p} \psi(t) \rangle = \sum_{n \in \mathbb{Z}^{d}} |n|^{p} |\langle \delta_{n}, \psi(t) \rangle|^{2}, 
	\end{equation}
    understood in the quadratic form sense and allowed to be infinite. We establish arithmetic polynomial spectral localization and polynomial dynamical localization in expectation for this family.

	\subsection{Polynomial dynamical localization}
	Since the pioneering work of P. Anderson \cite{PhysRev.109.1492}, the phenomenon of localization has been extensively studied. We say that an operator $H$ exhibits \emph{spectral localization} if it has pure point spectrum, which implies all eigenfunctions belong to $\ell^{2}(\mathbb{Z}^{d})$. If, in addition, every eigenfunction decays exponentially, we say that $H$ exhibits \emph{Anderson localization}. 
	
	For quasi-periodic operators, the localization property depends not only on the potential but also on the arithmetic properties of the frequency $\alpha$ and the phase $\theta$.
	
	Recall that $\alpha \in\mathbb{T}^d$ is called {\it Diophantine} if there exist $\kappa>0$ and $\tau>d$ such that $\alpha \in {\rm DC}_d(\kappa,\tau)$, where
	\begin{equation}\label{dio1}
		\mathrm{DC}_d(\kappa,\tau) \coloneq \big\{\alpha \in\mathbb{T}^d: \inf_{j\in \mathbb{Z}}|\langle n,\alpha \rangle -j|
		> \frac{\kappa}{|n|^{\tau}},\quad \forall \, n\in\mathbb{Z}^d{\setminus}\{0\} \big\}.
	\end{equation}
	Here we denote $|n|=|n_1|+|n_2|+\cdots+|n_d|$ and $\langle n,\alpha \rangle=n_1\alpha_1+n_2\alpha_2+\cdots+n_d\alpha_d$. Denote $\mathrm{DC}_d=\cup_{\kappa>0} \mathrm{DC}_d(\kappa,\tau)$, which is of full Lebesgue measure. 
	
	A phase $\phi \in \mathbb{T}$ is called {\it Diophantine} with respect to $\alpha$ if it satisfies $\phi \in \Theta_\gamma^{\tau}$, where $\gamma>0$, $\tau>d$ and
	\begin{equation}\label{dio2}
		\Theta_\gamma^{\tau} \coloneq \big\{\phi \in \mathbb{T}: \inf_{j \in \mathbb{Z}}|2\phi-\langle n,\alpha \rangle -j|
		\geqslant \frac{\gamma}{(|n|+1)^{\tau}}, \quad \forall \, n \in \mathbb{Z}^d \big\}.
	\end{equation}
	Denote $\Theta^{\tau}=\bigcup_{\gamma>0} \Theta_\gamma^{\tau}$, which is of full measure in $\mathbb{T}$. A phase $\phi \in \mathbb{T}$ is called rational with respect to  $\alpha$ if $2\phi = \langle n_0,\alpha \rangle$ mod $\mathbb{Z}$ for some $n_0 \in \mathbb{Z}^d$.
	
	After decades of extensive study, the spectral behavior of the AMO is now well understood in terms of the arithmetic properties of $\alpha$ and $\theta$. The frequency and phase resonances play important roles in the spectral transitions, see \cite{MR2521117, avila2017sharp, Jitomirskaya1999, MR3779957,MR4756946,MR4836219,MR4686650}.
	
	For the analytic Schr\"odinger operator $H_{V,\alpha,x}$ in the positive Lyapunov exponent regime, Bourgain and Goldstein \cite{MR1815703} proved that for fixed $x \in \mathbb{T}^{d}$ with $d=1,2$, the operator $H_{V,\alpha,x}$ exhibits Anderson localization for almost every $\alpha \in \mathrm{DC}_{d}$ (excluding a frequency subset depending on $V$), see also \cite{MR2100420}. Their method relies on Green's function estimates and multi-scale analysis, originally developed for random Schr\"odinger operators. Fr\"ohlich and Spencer \cite{frohlich1983absence} first systematically introduced the multiscale analysis method, laying the crucial foundation for rigorous proofs of Anderson localization. 
	
	Related localization results have been established for Schr\"{o}dinger operators on the strip \cite{MR1796713}, for the long-range operator \eqref{longrange} with $\cos 2\pi(\cdot)$ replaced by an arbitrary analytic function \cite{MR2100420}, for operators with power-law hopping and analytic cosine type potentials \cite{shi2024green}, for Schr\"odinger operators with Gevrey smooth potentials \cite{MR2108112,MR3291922}, for operators on $\mathbb{Z}^{d}$ with $d \geqslant 2$ \cite{MR2346272, MR1947458, MR4108613}, and for Schr\"odinger operators with interactions \cite{MR3925103} or with background potentials of low complexity \cite{MR4546503}.
	
	Note that the above multi-scale analysis approach generally requires removing an unknown set of frequencies to eliminate the dependence of the large deviation exceptional set on energy. Consequently, this method usually cannot be applied to any fixed $\alpha \in \mathrm{DC}_d$ directly. A major breakthrough came from \cite{Jitomirskaya1999, MR1298941, MR1328253}, where Jitomirskaya proved Anderson localization for the supercritical AMO, with $\lambda>1$ in the normalization of $\eqref{amo}$, for all $\alpha \in \mathrm{DC}_{1}$ and $\theta \in \Theta^{\tau}$, by introducing Lagrange interpolation polynomials to analyze the distribution of resonances. Her method was later extended to one-dimensional long-range operators \eqref{longrange}, where the Fourier coefficients $\widehat{V}_{m}$ decay exponentially \cite{MR1908056}. 
    
	In our setting, the Fourier coefficients $\widehat{V}_{m}$ satisfy a polynomial decay bound for $V \in C^{k}(\mathbb{T}^{d}, \mathbb{R})$, making it natural to consider a polynomial version of spectral localization.
	
	\begin{definition}\label{def1.2}
		The family $\{L_{V,\alpha,\theta}\}_{\theta\in\mathbb{T}}$ is said to exhibit \emph{$s$-polynomial spectral localization} ($s$-PSL), if $L_{V,\alpha,\theta}$ has pure point spectrum  for all $\theta\in\Theta$ where $|\mathbb{T}\setminus \Theta|=0$, and every eigenfunction satisfies
		\begin{equation*}
			|\psi(n)| \leqslant C (1+|n|)^{-s}
		\end{equation*}
		for some constant $C > 0$. Moreover, if there is a precise arithmetic description for $\Theta$, we say $\{L_{V,\alpha,\theta}\}_{\theta\in\mathbb{T}}$ has \emph{arithmetic $s$-PSL}. 
	\end{definition}
	
	Spectral localization concerns only the eigenfunctions and does not involve time evolution. To study the time evolution of the moments defined in \eqref{pmom}, we introduce the notion of \emph{dynamical localization}. One standard definition is that the operator has pure point spectrum and, for every compactly supported initial state and every $p > 0$,
	\begin{equation}\label{dl}
		\sup_{t \in \mathbb{R}} \sum_{n \in \mathbb{Z}^{d}} |n|^{p} |\langle \delta_{n}, \psi(t) \rangle|^{2} < \infty.
	\end{equation}
    For a quasi-periodic operator family $\{L_{V,\alpha,\theta}\}_{\theta \in \mathbb{T}}$, the bound in \eqref{dl} depends on the phase $\theta$. Moreover, dynamical localization is known to be sensitive to $\theta$. More precisely, even for the AMO, dynamical localization fails for a Baire generic set of phases $\theta \in \mathbb{T}$ \cite{JitoSimonSC}.
	
	Therefore, it is natural to consider dynamical localization \emph{in expectation} to phase $\theta$. More precisely, we say that the family of operators $\{L_{V,\alpha,\theta}\}_{\theta\in \mathbb{T}}$ exhibits \emph{exponential dynamical localization in expectation} if 
	\begin{equation*}
		\int_{\mathbb{T}} \sup_{t \in \mathbb{R}}|\langle \delta_{n}, e^{itL_{V,\alpha,\theta}} \delta_{m} \rangle| \, \mathrm{d}\theta \leqslant Ce^{-\gamma|m-n|}
	\end{equation*}
	for some constant $\gamma>0$. 
	
	For the AMO with $\lambda > 1$, it is known that \eqref{amo} exhibits exponential dynamical localization in expectation for $\alpha \in \mathrm{DC}_1$ \cite{MR3079334, MR4216568}, where the decay rate satisfies $\gamma = \log \lambda - \varepsilon$. Their proof relies on the sharp phase transition result established in \cite{MR4756946}. For the long-range family $\{L_{\lambda,V,\alpha,\theta}\}_{\theta \in \mathbb{T}}$, Ge-You-Zhou \cite{ge2019exponential} proved that exponential dynamical localization in expectation holds for $\alpha \in \mathrm{DC}_{d}$, provided $\widehat{V}_m$ decays exponentially and $\lambda$ is sufficiently large. 
	
	In the setting where $V \in C^{k}(\mathbb{T}^{d}, \mathbb{R})$, to quantify the decay rate of localization for \eqref{longrange}, we introduce the following definition.
	
	\begin{definition}\label{def1.1}
		The family $\{L_{V,\alpha,\theta}\}_{\theta\in\mathbb{T}}$ is said to exhibit \emph{$s$-polynomial dynamical localization in expectation} ($s$-PDL), if $L_{V,\alpha,\theta}$ has pure point spectrum for all $\theta \in \Theta$ with $|\mathbb{T} \setminus \Theta| = 0$ and there exists a constant $C > 0$ such that  
		\begin{equation}\label{ineq1}
		    \begin{split}
		        \int_{\mathbb{T}}\sup_{t \in \mathbb{R}}|\langle\delta_n, e^{-itL_{V,\alpha,\theta}}\delta_m\rangle|\mathrm{d}\theta 
                & =\int_{\Theta}\sup_{t \in \mathbb{R}}|\langle\delta_n, e^{-itL_{V,\alpha,\theta}}\delta_m\rangle|\mathrm{d}\theta  \\ 
		          & \leqslant \frac{C}{(1+|n-m|)^{s}}.
		    \end{split}
		\end{equation}
		Moreover, if $\Theta$ admits a precise arithmetic description, we say that $\{L_{V,\alpha,\theta}\}_{\theta \in \mathbb{T}}$ has \emph{arithmetic $s$-PDL}.
	\end{definition}
	
	Our first result establishes a criterion for arithmetic $s$-PSL and $s$-PDL for long-range operators on $\ell^2(\mathbb{Z}^{d})$ based on the reducibility of the associated Schr\"odinger cocycle. Define
	\begin{equation*}
		S_{E}^{V}(x) \coloneq \begin{pmatrix}
			E - V(x) & -1 \\
			1 & 0
		\end{pmatrix} \in \mathrm{SL}(2, \mathbb{R}).
	\end{equation*}
	We say that the Schr\"odinger cocycle $(\alpha, S_{E}^{V})$ is \emph{$C^{k}$-reducible} if there exist $B \in C^{k}(2\mathbb{T}^{d}, \mathrm{SL}(2, \mathbb{R}))$ and a constant matrix $A \in \mathrm{SL}(2, \mathbb{R})$ such that
	\begin{equation*}
		B(x + \alpha)^{-1} S_{E}^{V}(x) B(x) = A.
	\end{equation*}
    If, in addition, the conjugacy and the constant normal form satisfy uniform quantitative estimates specified in both Theorem~\ref{APSL} and Theorem~\ref{pdl}, we say that the Schr\"odinger cocycle is \emph{quantitatively $C^{k}$-reducible}. These two theorems give the precise quantitative assumptions required for arithmetic polynomial spectral localization and arithmetic polynomial dynamical localization, respectively. Let $\rho(E)$ denote the rotation number of the cocycle $(\alpha, S_{E}^{V})$, see Sect.\ref{sect2.2} for the precise definition.
	
	We now state the main result in a concise form, the detailed version can be found in Theorem~\ref{APSL} and Theorem~\ref{pdl} respectively.  
	
	\begin{theorem}\label{main1}
		Let $\alpha \in {\rm DC}_d(\kappa,\tau)$ and $k>d$. Suppose that the Schr\"odinger cocycle $(\alpha, S_{E}^{V})$ is quantitatively $C^{k}$-reducible for all $E \in \Sigma_{V,\alpha}$ with $\rho(E) \in \Theta^{2\tau}$. Then 
		\begin{enumerate}[label=(\Alph*)]
			\item \label{item:psl} The operator $L_{V, \alpha, \theta}$ exhibits arithmetic $k$-PSL for every $\theta \in \Theta^{\tau}$. 
			\item \label{item:pdl} The family $\{L_{V, \alpha, \theta}\}_{\theta \in \mathbb{T}}$ exhibits arithmetic $k$-PDL.
		\end{enumerate}
	\end{theorem}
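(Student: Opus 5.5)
The plan is to use Aubry duality to turn the quantitative $C^k$-reducibility of $(\alpha,S_E^V)$ into polynomially decaying eigenfunctions of $L_{V,\alpha,\theta}$, in the spirit of the exponential case of Ge--You--Zhou, but with analytic norms replaced by $C^k$ norms. The dual object is a Bloch wave. For $E\in\Sigma_{V,\alpha}$ with $\rho(E)\in\Theta^{2\tau}$, the quantitative reducibility gives $B\in C^k(2\mathbb{T}^d,\mathrm{SL}(2,\mathbb{R}))$ with $\|B\|_{C^k}$ controlled and constant $A$. Since the rotation number is Diophantine, $A$ cannot be parabolic with a nontrivial resonance, so it is elliptic, $A\sim R_{\rho}$ with $\rho=\rho(E)$ up to a resonance shift $\langle n_0,\alpha\rangle/2$. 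Write the first column of $B$ in complex form, $u(x)=B_{11}(x)+iB_{21}(x)$. Then $u(x+\alpha)+u(x-\alpha)+V(x)u(x)=2\cos 2\pi\rho\, u(x)$ after adjusting the phase factor $e^{2\pi i\langle n_0,x\rangle/2}$, and Fourier transform yields a solution $\psi(n)=\widehat{u}(n)$ of $L_{V,\alpha,\theta}\psi=E\psi$ with $\theta=\rho$ (mod the shift). Because $u\in C^k$ with a controlled norm, $|\widehat u(n)|\le C\|B\|_{C^k}(1+|n|)^{-k}$, so $\psi$ is $k$-polynomially decaying. Here $k>d$ ensures $\psi\in\ell^2$, and even $\ell^1$.

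For part \ref{item:psl}, fix $\theta\in\Theta^\tau$. The task is to show that every generalized eigenfunction of $L_{V,\alpha,\theta}$ arises this way, so that by Shnol/Berezanskii the spectral measures are supported on eigenvalues whose eigenvectors decay like $(1+|n|)^{-k}$. The route goes through the dual operator. If $\theta\in\Theta^\tau$ is the rotation number of some energy, then $\rho(E)=\theta$ lies in $\Theta^{2\tau}$ automatically, up to the resonance shift. Reducibility then says the dual cocycle is $C^k$-conjugate to a rotation. Next, a polynomially bounded solution of $L_{V,\alpha,\theta}\psi=E\psi$ is dualized into a distributional solution of the cocycle equation over $\mathbb{T}^d$. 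Conjugating by $B$ reduces it to a constant rotation equation, which forces its Fourier support onto the Bloch wave. This gives simplicity and the identification $\psi=c\,\widehat u$. The step I expect to be hardest is exactly here: the generalized eigenfunction is only a tempered distribution, and $B$ is only $C^k$. I would first try to control the product with $B^{-1}$ using $k>d$ and Sobolev-type bounds. Failing that, I would use the completeness argument of Ge--You--Zhou, which shows that the Bloch waves $\{\widehat u_{E}(\cdot-m)\}$ form an orthonormal basis. That argument uses the integrated density of states, $\rho\mapsto N$, and the fact that $\Theta^{\tau}$ has full measure.

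For part \ref{item:pdl}, for each $\theta$ in the full-measure set, expand in the orthonormal eigenbasis $\{\psi_{\theta,j}\}$:
\[
\sup_t|\langle\delta_n,e^{-itL}\delta_m\rangle|\le\sum_j|\psi_{\theta,j}(n)||\psi_{\theta,j}(m)|.
\]
By covariance, $L_{\theta+\langle l,\alpha\rangle}$ is unitarily equivalent to $L_\theta$ via translation by $l$. The eigenfunctions are therefore indexed by $\ell\in\mathbb{Z}^d$ through $\psi(\cdot-\ell)$ with phase $\theta-\langle\ell,\alpha\rangle$ corresponding to rotation number $\rho$. Changing variables $\theta\mapsto\rho$, the $\theta$-integral becomes an integral over $E$ against $dN(E)$, summed over $\ell$:
\[
\int_{\mathbb{T}}\sup_t|\cdots|\,d\theta\le\sum_{\ell}\int |\widehat u_E(n-\ell)||\widehat u_E(m-\ell)|\,dN(E).
\]
Here we use that $\rho\mapsto E$ pushes Lebesgue measure to $dN$. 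The uniform bound $\|B_E\|_{C^k}\le C$, which is the second quantitative assumption in Theorem~\ref{pdl}, may be allowed to depend polynomially on $\gamma$ where $\rho(E)\in\Theta^{2\tau}_\gamma$. Inserting it gives the pointwise bound $C(1+|n-\ell|)^{-k}(1+|m-\ell|)^{-k}$. Summing over $\ell$ with $k>d$ gives $C(1+|n-m|)^{-k}$. Integrating over $\gamma$-level sets, whose measure is $O(\gamma)$, absorbs any $\gamma^{-c}$ growth, provided the quantitative assumption is arranged that way. The remaining technical point is normalization. The Bloch wave $\widehat u$ must be normalized in $\ell^2$, and the lower bound on $\|u\|_{L^2}$ follows from $\det B=1$.
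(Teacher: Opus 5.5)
Your proposal has a genuine gap in each part.

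\textbf{Part (A).} The claim is pure point spectrum for \emph{every} $\theta\in\Theta^{\tau}$, and neither route you describe reaches it.

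The Shnol/Berezanskii route is circular. To conjugate the distributional section dual to a generalized eigenfunction of $L_{V,\alpha,\theta}$ at energy $E$, you need $(\alpha,S_E^V)$ to be reducible, i.e.\ $\rho(E)\in\Theta^{2\tau}$. Nothing is assumed at energies with non-Diophantine rotation number, and showing that $\mu_{\theta,\delta_n}$ gives them no weight is exactly what has to be proved. Your observation that $\theta\in\Theta^\tau$ forces $\rho(E)\in\Theta^{2\tau}$ applies only to the energies $E(T^m\theta)$ already known to be eigenvalues, not to spectrally typical $E$.

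The fallback Fubini/IDS argument gives $\sum_m|u_{m,\theta}(n)|^2=1$ only for Lebesgue-a.e.\ $\theta$. That is the non-arithmetic statement, and it also breaks the ``arithmetic'' in part (B), since Definition~\ref{def1.1} requires pure point spectrum on the arithmetic set.

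The missing step is the passage from a.e.\ $\theta$ to every $\theta\in\Theta^\tau$. The paper does this with the $\mathcal R$-measure $\nu_{\theta,\delta_n}$:
\begin{enumerate}
\item $\nu_{\theta,\delta_n}(\cdot)\le\mu^{\mathrm{pp}}_{\theta,\delta_n}(\mathcal E^{2\tau}\cap\cdot)$ for \emph{every} $\theta\in\Theta^{2\tau}$, with full mass for a.e.\ $\theta$ (Lemma~\ref{dominate}).
\item A fixed $\theta\in\Theta^\tau_\gamma$ is approximated by full-mass phases in $\Theta^{2\tau}_{\bar\gamma}$ (homogeneity, Lemma~\ref{hom}).
\item The tails $\sum_{|m|>M}|u_{m,\theta}(n)|^2$ are bounded uniformly on $\Theta^{2\tau}_{\bar\gamma}$ (Lemma~\ref{uniform}). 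This is where \eqref{fulluniformcond} enters, since $T^m\theta$ only lies in $\Theta^{2\tau}_{\gamma_m}$ with $\gamma_m\sim(1+|m|)^{-2\tau}\bar\gamma$.
\item The finitely many remaining terms are continuous in $\theta$, by compactness of the uniformly bounded $C^k$ conjugacies and uniqueness of $u_E$ up to a phase (Lemma~\ref{continuous}).
\end{enumerate}
Together these force $\nu_{\theta,\delta_n}(\mathcal E^{2\tau})=1$, hence $\mu^{\mathrm{pp}}_{\theta,\delta_n}=\mu_{\theta,\delta_n}$.

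\textbf{Part (B).} You use the one-center bound \eqref{decayuE}, $|u_E(n)|\lesssim\gamma^{-a}(1+|n|)^{-k}$ with $a=c_3+c_4k+c_1k$. Summing over $\gamma$-strata of measure $O(\gamma)$ then converges only if $2a<1$, and this is not among the hypotheses. Indeed \eqref{fulluniformcond} only yields $c_1<1/(4\tau)$, so $2c_1k$ can exceed $1$ once $k>2\tau$. In Corollary~\ref{cor3.1}, $c_1=2/(\sigma(k'-1))$ while $k$ may be a fixed fraction of $k'$, so $c_1k$ is of order $\sigma^{-1}$. The growth is therefore not absorbed, and optimizing a truncation only gives the exponent $k/(2a)<k$.

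The loss $\gamma^{-c_1k}$ comes from collapsing the resonant center $-n^*$, with $|n^*|\le\gamma^{-c_1}$, onto the origin. The missing idea is to keep both centers. From $B=\widetilde B R_{\langle n^*,x\rangle/2}e^{Y}$, $u_E$ is $(k,n^*,C,C_{|n^*|})$-good with $C\lesssim\gamma^{-(c_3+c_4k)}$ and $C_{|n^*|}\lesssim\gamma^{-1}(1+|n^*|)^{-\min(c_2,c_5-2\tau)}$; the second bound comes from the smallness of $\|Y\|_k$ and of $|\nu|/|\rho(\bar A)|$. In $\sum_\ell|u(p-\ell)u(q-\ell)|$:
\begin{itemize}
\item the same-center terms give $(1+|p-q|)^{-k}$ with no loss;
\item the cross terms lose $(1+|n^*|)^k$ but carry the factor $C_{|n^*|}$;
\item \eqref{A2} bounds $C^2(1+C_{|n^*|}(1+|n^*|)^k)$ by $\gamma^{-c}$ with $c<1$, independent of $c_1$.
\end{itemize}
Stratifying by $\gamma_i=10^{-i}$ and $|n^*|=h$ then gives $\sum_i\gamma_i^{1-c}<\infty$ and the full exponent $k$.

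A smaller slip: the Bloch wave is not $B_{11}+iB_{21}$, and it does not solve $u(x+\alpha)+u(x-\alpha)+V(x)u(x)=2\cos2\pi\rho\,u(x)$. The right object is the $(1,1)$ entry $b_{11}$ of $BM^{-1}U^{-1}$, where $U$ diagonalizes $M\bar AM^{-1}$. It solves $(E-V(x))b_{11}(x)=e^{2\pi i\rho(\bar A)}b_{11}(x+\alpha)+e^{-2\pi i\rho(\bar A)}b_{11}(x-\alpha)$, and its Fourier coefficients, after the twist by $e^{-\pi i\langle \deg B,x\rangle}$, give the eigenfunction of $L_{V,\alpha,\rho(E)}$.
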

	
    Regarding spectral localization, Theorem~\ref{main1}\ref{item:psl} establishes $k$-PSL for every $\theta \in \Theta^{\tau}$ under the stated quantitative reducibility assumptions. This gives an explicit arithmetic full-measure set of localization phases, whereas \cite{MR4482246} establishes polynomial spectral localization for almost every phase. 
    
    Regarding dynamical localization, Theorem~\ref{main1}\ref{item:pdl} yields arithmetic $k$-PDL with the phase set $\Theta^{\tau}$, providing the arithmetic description not obtained in \cite{shan2025dynamical}, as noted in \cite[Remark~1.2]{shan2025dynamical}. For sufficiently small $C^{k'}$ potentials, our reducibility estimates allow $k=\lfloor\eta k'\rfloor$ for every fixed $0<\eta<1/4$ and sufficiently large $k'$. Thus, the proportion of regularity retained can be arbitrarily close to $1/4$, compared with $1/400$ in \cite{shan2025dynamical}. Our dynamical localization criterion then yields $k$-PDL without further loss of this remaining exponent.

    For sufficiently small potentials of high finite regularity, we establish the quantitative reducibility estimates required by the preceding theorem and obtain the following localization result.

    \begin{Corollary}\label{main2}
        Let $\alpha \in \mathrm{DC}_{d}(\kappa,\tau)$ and let $k \in \mathbb{N}$ satisfy $k>d$. There exists an integer $k'=k'(\tau,d,k)>4k$ and $\varepsilon'=\varepsilon'(\kappa,\tau,d,k)>0$ such that, if $V \in C^{k'}(\mathbb{T}^{d},\mathbb{R})$ and $\|V\|_{k'} \leqslant \varepsilon'$, then $L_{V,\alpha,\theta}$ has arithmetic $k$-PSL for every $\theta \in \Theta^{\tau}$, and the family $\{L_{V,\alpha,\theta}\}_{\theta \in \mathbb{T}}$ has arithmetic $k$-PDL.
    \end{Corollary}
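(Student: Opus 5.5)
The plan is to deduce Corollary~\ref{main2} from Theorem~\ref{main1}. For that we need to verify its hypothesis: for a small $C^{k'}$ potential $V$, the Schr\"odinger cocycle $(\alpha,S_E^V)$ must be quantitatively $C^{k}$-reducible for every $E\in\Sigma_{V,\alpha}$ with $\rho(E)\in\Theta^{2\tau}$. The estimates have to be of the uniform form required in Theorems~\ref{APSL} and~\ref{pdl}. Concretely, we want a conjugacy $B_E\in C^k(2\mathbb{T}^d,\mathrm{SL}(2,\mathbb{R}))$ with $\|B_E\|_k$ bounded by a polynomial in $\gamma^{-1}$, where $\rho(E)\in\Theta^{2\tau}_\gamma$. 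The constant normal form should be elliptic, namely a rotation $R_{\pm\rho(E)}$ up to the $\langle m,\alpha\rangle/2$ shift. The bounds must be independent of $E$ once $\gamma$ is fixed. Given this, Theorem~\ref{main1} yields both arithmetic $k$-PSL on $\Theta^\tau$ and arithmetic $k$-PDL.

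To prove the reducibility, I would run a finite-regularity KAM scheme in the spirit of Eliasson and Hou--You, combined with Zehnder-type analytic smoothing. First, approximate $V$ by trigonometric polynomials or analytic functions $V_j$ on shrinking strips. This uses the standard estimate $\|V-V_j\|_{C^0}\lesssim N_j^{-k'}\|V\|_{k'}$ and $|V_j|_{h_j}\lesssim\|V\|_{k'}$ with $h_j\sim N_j^{-1}$. Then, at step $j$, perform a KAM step for $(\alpha,A_j e^{F_j})$. The homological equation has small divisors $\langle n,\alpha\rangle$ and $2\rho_j-\langle n,\alpha\rangle$. Nonresonant modes are eliminated using $\alpha\in\mathrm{DC}_d(\kappa,\tau)$ and truncation at $|n|\le N_j$. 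Resonant modes $|n|\le N_j$ with $\|2\rho_j-\langle n,\alpha\rangle\|$ small are handled by a rotation $x\mapsto R_{\langle n,\alpha\rangle x/2}$, which costs a factor $N_j$ per derivative. This is where the $2\mathbb{T}^d$ periodicity enters. Because $\rho(E)\in\Theta^{2\tau}_\gamma$, only finitely many resonances occur, and their number and sizes are controlled by $\gamma$. After the last resonance, all remaining steps are nonresonant and the conjugacies converge in $C^k$.

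The bookkeeping for the derivatives is as follows. Each conjugation has $C^k$-norm bounded by $N_j^{k+C\tau}\varepsilon_j^{1/2}$, and $\varepsilon_{j+1}\lesssim \varepsilon_j^{3/2}+N_j^{-k'}$. For the product $B=\prod B_j$ to converge in $C^k$ with polynomial dependence on $\gamma^{-1}$, we need $k'$ roughly exceeding $4k$ plus a $\tau,d$-dependent term. That is the source of $k'(\tau,d,k)>4k$. The threshold $\varepsilon'(\kappa,\tau,d,k)$ is chosen small enough to start the iteration uniformly in $E$. Finally, I would check that the resulting normal form is a rotation matching $\rho(E)$, using the rotation number's behavior under conjugation. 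I would also check that the $E$-uniformity and quantitative form match exactly the hypotheses listed in Theorems~\ref{APSL} and~\ref{pdl}.

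The main obstacle is the quantitative control through the resonant steps. Each resonance removal inflates $C^k$ norms by $N_j^k$, and we must still have $\|B\|_k\le C\gamma^{-c}$ with $c$ depending only on $\tau,d,k$. I would first try to show that the resonance at step $j$ forces $N_j\gtrsim(\gamma/\varepsilon_j)^{1/2\tau}$. This bounds the final accumulated norm by a fixed power of $\gamma^{-1}$, together with a summable product over nonresonant steps.
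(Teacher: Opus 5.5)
Your overall route is the paper's: reduce to Theorem~\ref{main1}, and verify its hypothesis with a KAM scheme plus Zehnder smoothing, using $\rho(E)\in\Theta^{2\tau}_\gamma$ to get finitely many resonances (Theorem~\ref{Ckreducibility} fed into Theorems~\ref{APSL} and~\ref{pdl}). The gap is in what you say must be verified. The target ``$\|B_E\|_k\le C\gamma^{-c}$ with $c=c(\tau,d,k)$, normal form a rotation'' is not the hypothesis of Theorem~\ref{main1}, and it is not sufficient. The criterion needs the structured form \eqref{BRY}--\eqref{est} together with the exponent conditions \eqref{fulluniformcond} and \eqref{cond}, and these are not cosmetic. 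In the PDL proof one sums $C_{\gamma_i}^2\,\mu(\mathbb{T}\setminus\Theta^\tau_{\gamma_{i-1}})\approx C_{\gamma_i}^2\gamma_i$ over $\gamma_i=10^{-i}$. So with a one-center bound $C_\gamma\approx\|B\|_0\|B\|_k\lesssim\gamma^{-c}$ you need $c<1/2$. In Lemma~\ref{uniform}, $T^m\theta$ lies only in $\Theta^{2\tau}_{\gamma_m}$ with $\gamma_m\approx|m|^{-2\tau}\bar\gamma$. The decay in $m$ is then only $(1+|m|)^{-k+2\tau c}$, which must stay square-summable.

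Your insistence on a rotation normal form already breaks this. Near band edges $|\rho(\bar A)|$ can be of order $\gamma$. Conjugating a nearly parabolic constant matrix (e.g.\ $A_E$ with $E\approx 2$) to $R_\rho$ costs a constant matrix of norm $\approx|\rho|^{-1/2}$. The eigenfunction bound obtained through such a $B$ then has $C_\gamma\gtrsim\gamma^{-1}$, and $\sum_i C_{\gamma_i}^2\gamma_i$ diverges. The paper avoids this in three ways. It keeps $\bar A$ in the form \eqref{red}. It uses a unitary Schur triangularization, since only the first column of $B_1$ enters the eigenfunction. It invokes Lemma~\ref{lem4.2} only when $4|\nu|<|\rho(\bar A)|$.

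Second, even with the correct target, your sketch lacks the mechanism that makes the $\gamma$-exponents small. The conjugacy contains the last resonant rotation $R_{\langle n^*,x\rangle/2}$, whose $C^k$ norm is of order $|n^*|^k$, and earlier resonances contribute similar factors. One must show $|n^*|\le\gamma^{-c_1}$, $\|\widetilde B\|_0\|\widetilde B\|_k\le\gamma^{-c_3}$ and $|\deg\widetilde B|\le\gamma^{-c_4}$ with $c_1,c_3,c_4=O(\tau/(\sigma^2k'))$, i.e.\ exponents that shrink as $k'$ grows. In the paper this has two ingredients.
\begin{enumerate}
\item Inverting the resonance relation. From $\gamma\lesssim\epsilon_{l_{j_m}}^{\sigma}N_{l_{j_m}}^{2\tau}$ with $\epsilon_l\approx l^{-D\tau}$ and $D\asymp k'/\tau$, one gets the upper bound $l_{j_m}\le\gamma^{-1/((\sigma D-2-2\delta)\tau)}$. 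Your lower bound $N_j\gtrsim(\gamma/\varepsilon_j)^{1/2\tau}$ is only useful once turned into this.
\item Separation of consecutive resonances, $l_{j_{i-1}}\le l_{j_i}^{(1+\lambda D)/(\sigma D)}$ (Lemma~\ref{adjacentres}). This, not a summable product over nonresonant steps, is what controls the ``$N_j^k$ per resonance'' loss.
\end{enumerate}

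Finally, with a single-norm bound the factor $|n^*|^k\le\gamma^{-2k/(\sigma(k'-1))}$ forces $k'\gtrsim k/\sigma$. That is harmless for the existential statement, but it is not the ``$k'$ roughly $4k$'' you claim. Reaching $k\approx k'/4$ requires the two-center good eigenfunction of Theorem~\ref{goodeig}. One splits $B=\widetilde BR_{\langle n^*,x\rangle/2}e^Y$ with $\|Y\|_k\le(1+|n^*|)^{-k'/2}$ and $|\nu|\le(1+|n^*|)^{-k'/2-2\tau}$, so that the weight at the second center absorbs $(1+|n^*|)^k$. One then checks \eqref{fulluniformcond} and \eqref{cond} with $c_1=\frac{2}{\sigma(k'-1)}$, $c_2=\frac{k'}{2}$, $c_3=\frac{2\tau k'}{\sigma^2(k'-1)^2}$, $c_4=\frac{10\tau}{\sigma^2(k'-1)^2}$, $c_5=\frac{k'}{2}+2\tau$, over a nonempty range of $k$.
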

    \begin{remark}
        Note that the restriction to a full-measure arithmetic subset of phases is, in general, unavoidable, since exceptional phases may exhibit singular continuous spectrum. For more quantitative growth estimates of \eqref{dl} that are uniform in $\theta\in\mathbb{T}$, see \cite{MR4288185,JPo,prema, liu2026quantum, MR4564259,  MR4604835}. 
    \end{remark}
    \begin{remark}
        The present paper focuses on arithmetic PSL and PDL in expectation for the long-range family $\{L_{V,\alpha,\theta}\}_{\theta \in \mathbb{T}}$. Strong ballistic transport for the Aubry dual quasi-periodic Schr\"odinger family $\{H_{V,\alpha,x}\}_{x \in \mathbb{T}^{d}}$ will be studied in a forthcoming paper, using the quantitative reducibility and localization estimates obtained here. 
    \end{remark}

	\subsection{Novelty of the proof}
	Let us first focus on arithmetic polynomial spectral localization. As previously mentioned, the proofs in \cite{MR2100420, MR1815703, MR1796713} are based on multi-scale analysis, which establishes localization but lacks an arithmetic characterization of the frequency $\alpha$. In contrast, the results in \cite{MR1908056, Jitomirskaya1999} provide arithmetic localization for both frequency $\alpha$ and phase $\theta$, but are confined to operators on $\ell^2(\mathbb{Z}^1)$.
	
	Recently, a new approach to proving localization via Aubry duality and reducibility was developed in \cite{avila2017sharp,  ge2020arithmetic,MR3512893}. A key advantage of this method is its robustness: it remains effective across different dimensions and does not critically depend on the smoothness of the potential, see also the survey \cite{MR3966844}. As a result, one can establish arithmetic localization for both higher-dimensional operators and potentials with low regularity.
	
	More precisely, Aubry duality links the reducibility energies $E$ of the Schr\"odinger cocycle to the phases of the dual long-range operator through the rotation number $\rho(E)$. The Fourier coefficients of the conjugating transformation $B$ yield a sufficiently ``good'' eigenfunction (see Definition \ref{def3.1}) for $L_{V,\alpha,\rho(E)}$, with decay rate $|n|^{-k}$. By applying shifts to this eigenfunction, one constructs a family of eigenfunctions. Following the strategy of \cite{ge2020arithmetic}, we introduce an auxiliary measure to demonstrate that this family forms a basis of $\ell^{2}(\mathbb{Z}^{d})$. This establishes Theorem \ref{main1}\ref{item:psl}.
	
	Regarding polynomial dynamical localization, we employ the eigenfunction expansion to reduce the problem to quantitative estimates for the eigenfunctions of $L_{V,\alpha,\rho(E)} u_E = E u_E$. Since uniform localization of eigenfunctions across the spectrum does not hold \cite{jitomirskaya2024sharp}, we decompose $\Theta^{\tau}$ into disjoint subsets $\Theta^{\tau}=\cup_i \Theta_i$ and establish stratified quantitative estimates for eigenfunctions corresponding to $E$ with $\rho(E) \in \Theta_i$. The two-center estimate in Theorem~\ref{goodeig}, together with the quantitative bound \eqref{A2}, balances the possible growth of the localization center. Combining this estimate with the measure bound for the arithmetic strata reduces the proof to the convergence of a series of the form $\sum_{i \geqslant 1} \gamma_i^{1-c}$, where $c<1$. Consequently, the full polynomial decay exponent $k$ is retained in the dynamical localization estimate, which proves Theorem~\ref{main1}\ref{item:pdl}. 

    The paper is organized as follows. Section~2 collects the preliminaries. Section~3 constructs good eigenfunctions from quantitative reducibility. Sections~4 and~5 establish the criteria for arithmetic polynomial spectral localization and polynomial dynamical localization in expectation, respectively. Section~6 proves quantitative $C^{k}$-reducibility and applies these criteria to finitely differentiable potentials. Appendices~A and~B contain the continuity argument and the analytic KAM estimates, respectively.

\section{Preliminaries}
	Recall that $\mathrm{sl}(2,\mathbb{R})$ is the set of $2\times2$ matrices of the form 
    \begin{equation*}
        \begin{pmatrix}
            x & y-z \\
            y+z & -x
        \end{pmatrix}
    \end{equation*}
    where $x,y,z \in \mathbb{R}$. It is well-known that $\mathrm{sl}(2,\mathbb{R})$ is isomorphic to $\mathrm{su}(1,1)$, the set of matrices of the form
    \begin{equation*}
        \begin{pmatrix}
            it & \nu \\
            \bar{\nu} & -it
        \end{pmatrix}
    \end{equation*}
	where $t\in\mathbb{R}, \nu \in \mathbb{C}$. The isomorphism between $\mathrm{sl}(2,\mathbb{R})$ and $\mathrm{su}(1,1)$ is given by $A \rightarrow MAM^{-1}$ where 
    \begin{equation*}
        M=\frac{1+i}{2}\begin{pmatrix}
            1 & i \\ 
            1 & -i
        \end{pmatrix} \in \mathrm{SL}(2,\mathbb{C}).
    \end{equation*}
	Direct calculation shows that 
    \begin{equation*}
        M\begin{pmatrix}
            x & y-z \\
            y+z & -x
        \end{pmatrix}M^{-1}=\begin{pmatrix}
            iz & x+iy \\
            x-iy & -iz
        \end{pmatrix}.
    \end{equation*}

\subsection{Conjugation and Reducibility}

Let
\begin{equation*}
    \mathcal{S}_h \coloneq \{x = (x_1, \dots, x_d) \in \mathbb{C}^d : |\Im x_i| < h, \ \text{for all } 1 \leqslant i \leqslant d\}.
\end{equation*}
For any bounded analytic (possibly matrix-valued) function $F(x)$ defined on $\mathcal{S}_h$, define the norm
\begin{equation*}
    |F|_h \coloneq \sup_{x \in \mathcal{S}_h} \|F(x)\|.
\end{equation*}
We denote by $C_h^\omega(\mathbb{T}^d,\mathbb{R})$ the space of real analytic functions on $\mathbb{T}^d$ admitting a bounded holomorphic extension to $\mathcal{S}_h$ and taking real values on $\mathbb{T}^d$. Similarly, $C_h^\omega(\mathbb{T}^d,\mathrm{sl}(2,\mathbb{R}))$ denotes the space of real analytic maps from $\mathbb{T}^d$ to $\mathrm{sl}(2,\mathbb{R})$ admitting a bounded holomorphic extension to $\mathcal{S}_h$ with values in $\mathrm{sl}(2,\mathbb{C})$. Finally, $C_h^\omega(\mathbb{T}^d,\mathrm{SL}(2,\mathbb{R}))$ denotes the space of real analytic maps from $\mathbb{T}^d$ to $\mathrm{SL}(2,\mathbb{R})$ admitting a bounded holomorphic extension to $\mathcal{S}_h$ with values in $\mathrm{SL}(2,\mathbb{C})$. We also define
\begin{equation*}
    C^\omega(\mathbb{T}^d,*) \coloneq \bigcup_{h>0}C_h^\omega(\mathbb{T}^d,*).
\end{equation*}
Let $C^k(\mathbb{T}^d,*)$ denote the space of $k$-times continuously differentiable $*$-valued functions. Define the $C^{k}$-norm
\begin{equation*}
    \|F\|_k \coloneq \sup_{|\beta| \leqslant k,\ x \in \mathbb{T}^d} \|\partial^{\beta} F(x)\|.
\end{equation*}   
By standard Cauchy estimate, for any $F \in C^{\omega}_{h}(\mathbb{T}^{d},*)$, 
\begin{equation}\label{norm}
    \|F\|_{k} \leqslant \frac{C(k,d)}{(\min\{h,1\})^k}|F|_{h}. 
\end{equation}

For $F \in C^k(\mathbb{T}^d, *)$ with $k \geqslant 0$, the Fourier coefficients of $F$ are defined by
\begin{equation*}
    \widehat{F}(n) \coloneq \int_{\mathbb{T}^d} F(x) e^{-2\pi i \langle n, x \rangle} \, \mathrm{d}x, \quad \forall n \in \mathbb{Z}^d.
\end{equation*}
By integrating by parts, for any $n \in \mathbb{Z}^{d}$,
\begin{equation}\label{decay}
    \|\widehat{F}(n)\| \leqslant C(k,d)\|F\|_k (1+|n|)^{-k}.
\end{equation}
Indeed, for $n \neq 0$, choosing $1 \leqslant j \leqslant d$ such that $|n_j| \geqslant |n|/d$ and integrating by parts $k$ times in the $x_j$ variable gives \eqref{decay}. 
	
For two cocycles $(\alpha,A_1)$, $(\alpha,A_2)\in \mathbb{T}^d  \times C^{\ast}(\mathbb{T}^d,\mathrm{SL}(2,\mathbb{R}))$ ($\ast$ represents $\omega$ or $k$),  we can say that they are $C^{*}$ conjugated if there exists $Z \in C^{*}(2\mathbb{T}^d,  \mathrm{SL}(2,\mathbb{R}))$, such that
    \begin{equation*}
        Z^{-1}(x+\alpha)A_1(x)Z(x)=A_2(x).
    \end{equation*}
Notably, we allow conjugacies on $2\mathbb{T}^d=\mathbb{R}^d/(2\mathbb{Z})^d$. 
	
We say that a finitely differentiable cocycle $(\alpha,A) \in \mathbb{T}^d \times C^k(\mathbb{T}^d,\mathrm{SL}(2,\mathbb{R}))$ is $C^{k_1}$(or $C^{k,k_1}$) almost reducible if there exist $B_j \in C^{k_1}(2\mathbb{T}^d,\mathrm{SL}(2,\mathbb{R}))$ and a constant matrix $\bar{A} \in \mathrm{SL}(2,\mathbb{R})$ such that
\begin{equation*}
    B_j^{-1}(x+\alpha)A(x)B_j(x) \to \bar{A}
\end{equation*}
in the $C^{k_1}$ topology. We say that $(\alpha,A)$ is $C^{k_1}$(or $C^{k,k_1}$) reducible if there exist $B \in C^{k_1}(2\mathbb{T}^d,\mathrm{SL}(2,\mathbb{R}))$ and $\bar{A} \in \mathrm{SL}(2,\mathbb{R})$ such that
\begin{equation*}
    B^{-1}(x+\alpha)A(x)B(x)=\bar{A}.
\end{equation*}

\subsection{Rotation number and degree}\label{sect2.2}
	
Throughout this subsection, we assume that the base rotation $x \mapsto x+\alpha$ is uniquely ergodic, this is the case, in particular, when $\alpha$ is rationally independent.

Suppose that $A \in C^0(\mathbb{T}^d,\mathrm{SL}(2,\mathbb{R}))$ is homotopic to identity. Then the projective skew-product $F_A:\mathbb{T}^d \times \mathbb{S}^1 \rightarrow \mathbb{T}^d \times \mathbb{S}^1$ defined by 
\begin{equation*}
	F_A(x,\omega) \coloneq \bigg(x+\alpha, \frac{A(x)\cdot \omega}{|A(x) \cdot \omega|}\bigg),
\end{equation*}
is also homotopic to the identity. Identifying $\mathbb{S}^1$ of the form $\mathbb{R}/\mathbb{Z}$, we choose a lift $\widetilde{F}_A:\mathbb{T}^d\times \mathbb{R}\rightarrow \mathbb{T}^d\times \mathbb{R}$ with  $\widetilde{F}_A(x,y)=(x+\alpha,y+\psi(x,y))$, where $\psi:\mathbb{T}^d \times \mathbb{R} \rightarrow \mathbb{R}$ is continuous and $\mathbb{Z}$-periodic in $y$. The function $\psi$ is called a lift of $A$. 

By its $\mathbb{Z}$-periodicity in $y$, $\psi$ induces a continuous function on $\mathbb{T}^d \times \mathbb{S}^1$, which is still denoted by $\psi$. Let $\mu$ be any $F_A$-invariant probability measure on $\mathbb{T}^d \times \mathbb{S}^1$. Since the projection of $\mu$ onto the first coordinate is invariant under the base rotation $x \mapsto x+\alpha$, the unique ergodicity of the base rotation implies that this projection is the Lebesgue measure. The number
\begin{equation}\label{rot1}
    \rho(\alpha,A) \coloneq \int_{\mathbb{T}^d \times \mathbb{S}^1}\psi(x,\omega) \, \mathrm{d} \mu(x,\omega) \bmod \mathbb{Z}
\end{equation}
is independent of the choices of the lift $\psi$ and the invariant measure $\mu$. It is called the \emph{fibered rotation number} of the cocycle $(\alpha,A)$, see \cite{johnson1982rotation} for more details.
   
	
For $(\alpha, Ae^{f}) \in C^{0}(\mathbb{T}^{d}, \mathrm{SL}(2, \mathbb{R}))$ homotopic to the identity, where $A \in \mathrm{SL}(2, \mathbb{R})$ is a constant matrix and $f \in C^0(\mathbb{T}^d, \mathrm{sl}(2, \mathbb{R}))$ is sufficiently small, we have
\begin{equation}\label{con}
    \|\rho(\alpha, Ae^{f})-\rho(\alpha, A)\|_{\mathbb{R}/\mathbb{Z}} \leqslant \tilde{c}\|f\|_0^{1/2},
\end{equation}
where $\tilde{c}=\tilde{c}(\|A\|)$.
    
Let 
\begin{equation*}
    R_{\phi} \coloneq \begin{pmatrix} 
    \cos 2\pi\phi & -\sin 2\pi\phi \\ 
    \sin 2\pi\phi & \cos 2\pi\phi 
    \end{pmatrix}.
\end{equation*}
Let $B \in C^0(2\mathbb{T}^d,\mathrm{SL}(2,\mathbb{R}))$. If $B$ is homotopic on $2\mathbb{T}^d$ to $x \mapsto R_{\langle n,x\rangle/2}$ for some $n \in \mathbb{Z}^d$, then we call $n$ the \emph{degree} of $B$ and denote it by $\deg B$. In particular, if $B$ is $\mathbb{T}^d$-periodic and is homotopic on $\mathbb{T}^d$ to $x \mapsto R_{\langle n,x \rangle}$, then its degree as a map on $2\mathbb{T}^d$ is $2n$. Furthermore, 
\begin{equation}\label{deg1}
	\deg(AB)=\deg A+\deg B.
\end{equation}

Throughout the paper, all reducibility conjugacies defined on $2\mathbb{T}^{d}$ are understood as admissible lifts of $\mathrm{PSL}(2,\mathbb{R})$-valued conjugacies. More precisely, if $B \in C^{k}(2\mathbb{T}^{d},\mathrm{SL}(2,\mathbb{R}))$ and $\deg B=n \in \mathbb{Z}^{d}$, then
\begin{equation}\label{admissibleparity}
	B(x+l)=(-1)^{\langle n,l \rangle}B(x), \quad l \in \mathbb{Z}^{d}.
\end{equation}
The same convention applies to complex conjugacies obtained from real conjugacies by constant complex changes of coordinates, and their degrees are inherited from the corresponding real conjugacies. All conjugacies produced by the KAM scheme below satisfy \eqref{admissibleparity}.
    
Note that the fibered rotation number remains invariant under real conjugacies that are homotopic to the identity map. Generally speaking, when the cocycle $(\alpha,A_1)$ is conjugated to $(\alpha,A_2)$ by $B \in C^0(2\mathbb{T}^d, \mathrm{SL}(2,\mathbb{R}))$, i.e. $B^{-1}(x +\alpha)A_1(x)B(x)=A_2(x)$, we have
\begin{equation}\label{rot2}
	\rho(\alpha,A_2)=\rho(\alpha,A_1)-\frac{\langle \deg B,\alpha\rangle}{2} \bmod \mathbb{Z}.
\end{equation}
	
For a Schr\"odinger cocycle, we write
\begin{equation*}
    S_E^V(x) \coloneq \begin{pmatrix}
                        E-V(x) & -1 \\
                        1 & 0
                      \end{pmatrix}.
\end{equation*}
In the following text, we abbreviate $\rho(\alpha,S_{E}^{V})=\rho(E)$ and $\rho(\alpha,A)=\rho(A)$. When $\rho(A)$ appears in absolute value, we take its representative in $[-1/2,1/2]$.

\begin{remark}\label{normalformconvention}
    Throughout the paper, normal-form statements and calculations are written for constant matrix $A$ with nonnegative trace, using the principal logarithm. For $\operatorname{tr}A \geqslant 0$, $|\rho(A)|\in[0,1/4]$ and $\|2\rho(A)\|_{\mathbb{R}/\mathbb{Z}}=2|\rho(A)|$. The negative-trace case is included by applying the same calculations and absolute rotation-number estimates to $-A$ and restoring the negative sign in the conjugacy equation, with $\rho(-A)=\rho(A)+\frac{1}{2}\bmod\mathbb{Z}$.
\end{remark}

\subsection{Hyperbolicity and integrated density of states}
For a cocycle $(\alpha,A)$, define
\begin{equation*}
    \begin{split}
        & A_n(x)=A(x+(n-1)\alpha) \cdots A(x), \quad n \geqslant 1, \\ 
        & A_{-n}(x)=A_n(x-n\alpha)^{-1}, \quad n \geqslant 1.
    \end{split}
\end{equation*}
We call the cocycle $(\alpha,A)$ {\it uniformly hyperbolic} if for every $x \in \mathbb{T}^d$, there exists a continuous decomposition $\mathbb{C}^2=E^s(x) \oplus E^u(x)$ such that for some constants $C>0$, $c>0$ and every $n \geqslant 0$,
	\begin{equation*}
	    \begin{split}
	        & |A_n(x)v| \leqslant Ce^{-cn}|v|, \quad v \in E^s(x), \\ 
            & |A_{-n}(x)v| \leqslant Ce^{-cn}|v|, \quad v \in E^u(x). 
	    \end{split}
	\end{equation*}
	This decomposition is invariant by the dynamics, which means that for any $x \in \mathbb{T}^d$, $A(x)E^*(x) = E^*(x + \alpha)$, for $*= s, u$. In the $C^0$ topology, the set of uniformly hyperbolic cocycles is an open set. Specifically, in the case of quasi-periodic Schr\"{o}dinger operators, the cocycle $(\alpha, S_E^V)$ is uniformly hyperbolic if and only if $E \notin \Sigma_{V,\alpha}$, or in other words, if the energy lies within a spectral gap \cite{johnson1986exponential}.
	
Let's consider the Schr\"{o}dinger operators
\begin{equation*}
    (H_{V,\alpha,x}u)(n)=u(n+1)+u(n-1)+V(x+n\alpha)u(n), \quad n \in \mathbb{Z}.
\end{equation*}
The integrated density of states (IDS) is the function $N_{V,\alpha} \colon \mathbb{R} \rightarrow [0,1]$ defined by 
	\begin{equation*}
	    N_{V,\alpha}(E)=\int_{\mathbb{T}^d}\mu_{V,\alpha,x}(-\infty,E]\mathrm{d}x, 
	\end{equation*}
where $\mu_{V,\alpha,x}=\mu^{\delta_0}_{V,\alpha,x}$ is the spectral measure of $H_{V,\alpha,x}$. 
	
	Choose $\rho(E) \in [0,1/2]$, it is well known that 
	\begin{equation}\label{ids}
		N_{V,\alpha}(E)=1-2\rho(E). 
	\end{equation}
	
	For any Borel set $\Delta \subseteq \mathbb{R}$, we also define 
	\begin{equation}\label{nv}
		N(\Delta)=\{N_{V,\alpha}(E) \colon E \in \Delta\}, \ \rho(\Delta)=\{\rho(E) \colon E \in \Delta\}. 
	\end{equation}

	\subsection{Analytic approximation}\label{AA}
	Let $f \in C^{k}(\mathbb{T}^{d},\mathrm{sl}(2,\mathbb{R}))$. By Zehnder \cite{zehnder1975generalized}, there is a sequence $\{f_{j}\}_{j \geqslant 1}$, $f_{j} \in C_{\frac{1}{j}}^{\omega}(\mathbb{T}^{d},\mathrm{sl}(2,\mathbb{R}))$ and a constant $C'(k,d)>0$, such that
    \begin{equation}\label{aa}
        \begin{split}
            &\| f_{j}-f \|_{k} \xrightarrow{j \rightarrow \infty} 0, \\
            &|f_{j}|_{\frac{1}{j}} \leqslant C' \|f\|_{k}, \\
            &|f_{j+1}-f_{j}|_{\frac{1}{j+1}} \leqslant C'j^{-k}\|f\|_{k}.
        \end{split}
    \end{equation}
	Furthermore, if $k \leqslant k'$ and $f \in C^{k'}$, the properties \eqref{aa} still hold with $k'$ instead of $k$. This implies that the sequence can be constructed from $f$ irrespective of its regularity (since $f_{j}$ is achieved by convolving $f$ with a map that does not depend on $k$).

\subsection{Aubry duality}

Assume that, for some $E \in \mathbb{R}$, the quasi-periodic Schr\"odinger equation
\begin{equation*}
    H_{V,\alpha,x}u=Eu,
\end{equation*}
admits a quasi-periodic Bloch wave of the form $u(n)=e^{2\pi i n\theta}\psi(x+n\alpha)$ for some $\psi \in C^*(\mathbb{T}^d, \mathbb{C})$ (where $*$ denotes $\omega$ or $k$) and $\theta \in \mathbb{T}$. Let $\hat{\psi}(m)$ denote the $m$-th Fourier coefficient of $\psi$. Then it is straightforward to verify that the sequence $\{\hat{\psi}(m)\}_{m \in \mathbb{Z}^d}$ is an eigenfunction of the following long-range operator:
\begin{equation*}
    (L_{V,\alpha,\theta} \hat{\psi})(n)=\sum_{m \in \mathbb{Z}^d} \widehat{V}_m \hat{\psi}(n-m)+2\cos2\pi(\theta+\langle n, \alpha \rangle) \hat{\psi}(n), \quad n \in \mathbb{Z}^d.
\end{equation*}
We refer to $L_{V,\alpha,\theta}$ as the \emph{dual operator} of $H_{V,\alpha,x}$.

\section{Good eigenfunctions of dual operators}
In this section, we construct good eigenfunctions for the long-range operator $L_{V, \alpha, \rho(E)}$ under the assumption of quantitative $C^k$-reducibility for Schr\"odinger cocycle $(\alpha, S_{E}^{V})$. Throughout the text, we always assume that the parameters satisfy $\kappa, \gamma \in (0,1)$ and $\tau>d$.

We begin by introducing the definition of a good eigenfunction.

\begin{definition}\label{def3.1}
    A normalized eigenfunction $u \in \ell^2(\mathbb{Z}^d)$ is called \emph{$(s, n^*, C, C_{|n^*|})$-good} if there exist $s \in \mathbb{Z}^+$, $C > 0$, $n^* \in \mathbb{Z}^d$, and $C_{|n^*|} \in [0,1]$ such that
    \begin{equation*}
        |u(n)| \leqslant C((1+|n|)^{-s}+C_{|n^*|}(1+|n+n^*|)^{-s}), \quad \forall n \in \mathbb{Z}^d.
    \end{equation*}
\end{definition}	

\begin{theorem}\label{goodeig}
    Let $\alpha \in {\mathrm{DC}}_d(\kappa, \tau)$. Suppose that $(\alpha, S_{E}^{V})$ is $C^k$-reducible for $E \in \Sigma_{V,\alpha}$ with $\rho(E) \in \Theta^{\tau}_{\gamma}$. That is, there exist $B \in C^k(2\mathbb{T}^d, \mathrm{SL}(2,\mathbb{R}))$ and $\bar{A} \in \mathrm{SL}(2,\mathbb{R})$ such that
    \begin{equation}\label{red}
        B^{-1}(x+\alpha)S_E^V(x)B(x)=\bar{A}
        =M^{-1}\exp\begin{pmatrix}
        it & \nu \\
        \bar{\nu} & -it
        \end{pmatrix}M.
    \end{equation}
    Moreover, suppose that there exist $\widetilde{B} \in C^k(2\mathbb{T}^d, \mathrm{SL}(2,\mathbb{R}))$, 
    $Y \in C^k(\mathbb{T}^d, \mathrm{sl}(2,\mathbb{R}))$, and $n^* \in \mathbb{Z}^d$ such that
    \begin{equation}\label{BRY}
        B(x)=\widetilde{B}(x) \, R_{\frac{\langle n^*, x \rangle}{2}} \, e^{Y(x)},
    \end{equation}
    with the following estimates:
    \begin{equation}\label{est}
        \begin{split}
            & |n^{*}| \leqslant \gamma^{-c_{1}}, \quad \|Y\|_k \leqslant (1+|n^{*}|)^{-c_{2}}, \\
            & \|\widetilde{B}\|_{0}\|\widetilde{B}\|_{k} \leqslant \gamma^{-c_3}, \quad |\deg \widetilde{B}| \leqslant \gamma^{-c_4}, \\
            & |\nu| \leqslant (1+|n^*|)^{-c_5}, \, \text{if }n^{*} \neq 0, \quad |\rho(\bar{A})| \geqslant \frac{\gamma }{16^{\tau}(1+|n^*|)^{\tau}}. 
        \end{split}
    \end{equation}
    where $c_{i}>0,i=1,2,3,4,5$, are fixed constants independent of $E$ and $\gamma$. Then the following hold.  
    \begin{enumerate}[label=(\Alph*)]
        \item \label{item:res2}
        $L_{V, \alpha, \rho(E)}$ admits a $(k, n^*, C, C_{|n^*|})$-good eigenfunction $u_{E}$, where
        \begin{equation}\label{CC}
            \begin{split}
                & C \leqslant C_1(k,d)\gamma^{-(c_{3}+c_{4}k)}, \\
                & C_{|n^*|} \leqslant \min \bigg\{1, \frac{100^{\tau}}{\gamma} (1+|n^{*}|)^{-\min(c_{2}, c_{5}-\tau)}\bigg\}.
            \end{split}
        \end{equation}
        Moreover, if 
        \begin{equation}\label{cond}
            c_{5}>\tau \quad \text{and}\quad c \coloneq \frac{k}{\min\{c_2, c_5-\tau\}}+2(c_3+c_4k)<1,
        \end{equation}
        then
        \begin{equation}\label{A2}
            C^{2}(1+C_{|n^{*}|}(1+|n^{*}|)^{k}) \lesssim_{k,d,\tau}\gamma^{-c}.
        \end{equation}
        
        \item \label{item:res1} In particular, $u_{E}$ satisfies 
        \begin{equation}\label{decayuE}
            |u_{E}(n)| \leqslant C_2(k,d)\gamma^{-(c_{3}+c_{4}k+c_{1}k)} (1+|n|)^{-k}, \quad n\in \mathbb{Z}^d. 
        \end{equation}   
    \end{enumerate}
\end{theorem}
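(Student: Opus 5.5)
The construction follows the standard Aubry duality mechanism of \cite{avila2017sharp, ge2020arithmetic}. The reducibility relation \eqref{red} produces a Bloch-type solution of the Schr\"odinger equation, and its Fourier coefficients give an eigenfunction of $L_{V,\alpha,\rho(E)}$.

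\textbf{Step 1: removing $\nu$.} Conjugate $\bar A$ further so that the relevant conjugacy becomes (complex) diagonal. Write $\bar A = M^{-1}\exp\begin{pmatrix} it & \nu\\ \bar\nu & -it\end{pmatrix}M$. Since $|\rho(\bar A)|\geqslant \gamma 16^{-\tau}(1+|n^*|)^{-\tau}$ is bounded below (the elliptic case, with $t^2-|\nu|^2=(2\pi\rho(\bar A))^2>0$), there is a constant $P\in \mathrm{SL}(2,\mathbb{C})$ with $P^{-1}M\bar AM^{-1}P=\operatorname{diag}(e^{2\pi i\rho},e^{-2\pi i\rho})$. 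Moreover $\|P-I\|\lesssim |\nu|/|\rho(\bar A)|\lesssim 16^\tau\gamma^{-1}(1+|n^*|)^{\tau-c_5}$ when $n^*\neq 0$, and $P=I$ (or unitary) when $n^*=0$.

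\textbf{Step 2: extracting the eigenfunction.} Set $W(x)=B(x)M^{-1}P$. The first column $(w_1,w_2)^T$ of $W$ gives a solution $u(n)=e^{2\pi i n\rho}w_1(x+n\alpha)$ up to the usual shift identification $w_2(x)=w_1(x-\alpha)e^{-2\pi i \rho}$. Here $2\rho=2\rho(\bar A)=2\rho(E)-\langle\deg B,\alpha\rangle$ by \eqref{rot2}. Using the parity \eqref{admissibleparity}, $e^{\pi i\langle \deg B,x\rangle}w_1(x)$ is $\mathbb{T}^d$-periodic. After shifting the index by the degree, Aubry duality (the last subsection of the preliminaries) shows that its Fourier coefficients form an eigenfunction of $L_{V,\alpha,\rho(E)}$. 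Nonvanishing in $\ell^2$ and normalization come from $\det W=1$, which gives a lower bound $\|w_1\|_{L^2}\gtrsim \|\widetilde B\|_0^{-1}$. This lower bound is absorbed into the $\|\widetilde B\|_0\|\widetilde B\|_k$ factor of $c_3$.

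\textbf{Step 3: good decay \ref{item:res2}.} Expand using \eqref{BRY}:
\[
B=\widetilde B R_{\langle n^*,x\rangle/2}(I+(e^{Y}-I)).
\]
Write $R_{\langle n^*,x\rangle/2}M^{-1}=M^{-1}\operatorname{diag}(e^{\pi i\langle n^*,x\rangle},e^{-\pi i\langle n^*,x\rangle})$ (up to convention). The main term, $\widetilde B M^{-1}\operatorname{diag}(\cdots)P$ with $P$ replaced by $I$, is the periodic factor $\widetilde B$ times a single exponential. Its Fourier coefficients therefore equal those of $\widetilde B$ shifted by a degree-type vector, which is fixed once and for all by renormalizing the index so the main center sits at $0$. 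The shift contributes the $\gamma^{-c_4k}$ factor via $(1+|n+\deg|)^{-k}\lesssim (1+|\deg|)^{k}(1+|n|)^{-k}$. By \eqref{decay}, this piece gives $C(1+|n|)^{-k}$ with $C\lesssim\gamma^{-(c_3+c_4k)}$.

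The off-diagonal contributions, coming from $P-I$ and from $e^{Y}-I$, carry the opposite exponential $e^{-\pi i\langle n^*,x\rangle}$. Their Fourier coefficients are therefore centered at $-n^*$, with size bounded by $\|P-I\|+\|Y\|_k$ times the same $C$. This yields $C_{|n^*|}\leqslant \min\{1,100^\tau\gamma^{-1}(1+|n^*|)^{-\min(c_2,c_5-\tau)}\}$. Since the $\|Y\|_k$ contribution times $\widetilde B$ norms is handled by the Leibniz rule in $C^k$, it is absorbed into $C$.

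\textbf{Step 4: \eqref{A2} and \ref{item:res1}.} Bound $C_{|n^*|}(1+|n^*|)^k$ by taking the minimum. If $(1+|n^*|)^{\min(c_2,c_5-\tau)}\leqslant\gamma^{-1}$, use $C_{|n^*|}\le 1$ together with $(1+|n^*|)^k\leqslant \gamma^{-k/\min}$. Otherwise the product is at most $100^\tau\gamma^{-1}(1+|n^*|)^{k-\min}$; if $k>\min$ this is again controlled by interpolating the two cases, giving $\lesssim\gamma^{-k/\min}$. Multiplying by $C^2\lesssim\gamma^{-2(c_3+c_4k)}$ gives $\gamma^{-c}$, which is \eqref{A2}.

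For \eqref{decayuE}, apply $(1+|n+n^*|)^{-k}\leqslant (1+|n^*|)^k(1+|n|)^{-k}$ and $|n^*|\leqslant\gamma^{-c_1}$.

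\textbf{Main obstacle.} I expect the main obstacle to be the bookkeeping in Step 3: tracking the degree and parity conventions on $2\mathbb{T}^d$ so that the two centers are exactly $0$ and $-n^*$ with respect to the phase $\rho(E)$, rather than some shifted phase. I would handle this first by fixing conventions via \eqref{rot2} and \eqref{admissibleparity} and checking the case $Y=0$, $\nu=0$.
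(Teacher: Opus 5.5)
Your architecture matches the paper's. You complexify the reducing conjugacy by a constant matrix and read off the scalar equation \eqref{dualfun} for the first entry. You pass to $z_{11}=e^{-\pi i\langle \deg B,x\rangle}b_{11}$ via \eqref{admissibleparity} and get decay from \eqref{decay} with the factor $(1+|n_0|)^k$. You bound $\|b_{11}\|_{L^2}$ from below through $\det=1$ and $b_{21}(x)=b_{11}(x-\alpha)e^{-2\pi i\rho}$ (Lemma~\ref{lem3.1}). Finally you split the first column into a part centred at $0$ and a part carrying $e^{-2\pi i\langle n^*,x\rangle}$, centred at $-n^*$.

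\textbf{The gap is in Step~1.} The bound $\|P-I\|\lesssim|\nu|/|\rho(\bar A)|$ gives $\|P\|=O(1)$ only when $|\nu|\lesssim|\rho(\bar A)|$ (this is Lemma~\ref{lem4.2}). The hypotheses do not put you in that regime. For $n^*=0$ nothing bounds $\nu$, and for small $n^*\neq0$ the bound $(1+|n^*|)^{-c_5}$ can be far larger than $\gamma 16^{-\tau}(1+|n^*|)^{-\tau}$.

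When $|\nu|\gg|\rho(\bar A)|$, the two eigenvectors of $M\bar AM^{-1}$ make an angle of order $|\rho(\bar A)|/|\nu|$. Hence every diagonalizer of determinant one has $\|P\|^2\gtrsim|\nu|/|\rho(\bar A)|$. In particular it is not unitary, so ``$P=I$ (or unitary) when $n^*=0$'' is unjustified.

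$P$ enters both the numerator $|\hat w_1(n)|$ and, through $\|W\|_0\leqslant\|B\|_0\|P\|$, your lower bound on $\|w_1\|_{L^2}$. Your estimates therefore lose a factor of at least order $|\nu|/|\rho(\bar A)|$ in $C$. This is up to $\gamma^{-1}$ when $n^*\neq0$ and $c_5>\tau$, and is not controlled by the data at all when $n^*=0$. That breaks \eqref{CC} and \eqref{A2}, and hence the summability of $\sum_i\gamma_i^{1-c}$ used in Section~\ref{spdl}. The ``$1$'' in $\min\{1,\cdot\}$ is also not justified by your argument.

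\textbf{The missing idea is that you never need to diagonalize;} only the first column has to be an eigenvector. The paper takes a unitary $U$ with $UM\bar AM^{-1}U^{-1}$ upper triangular (Schur form). Since the $(2,1)$ entry vanishes, $B_1=BM^{-1}U^{-1}$ still yields \eqref{dualfun}. Meanwhile $|U_{ij}|\leqslant1$ and $\|B_1\|_0\leqslant 3\|\widetilde B\|_0$ give $C\lesssim\gamma^{-(c_3+c_4k)}$ and $C_{|n^*|}\leqslant1$ with no loss. Only then does one split:
\begin{itemize}
\item If $4|\nu|<(1-\|Y\|_k)|\rho(\bar A)|$, Lemma~\ref{lem4.2} provides $U$ with $\|U-\mathrm{Id}\|\leqslant|\nu|/|\rho(\bar A)|$. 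The off-centre coefficient $U_{11}Y_{21}+U_{21}Y_{22}$ is then $\lesssim\|Y\|_k+4|\nu|/|\rho(\bar A)|$.
\item Otherwise $\|Y\|_k+4|\nu|/|\rho(\bar A)|\geqslant1$, and $C_{|n^*|}\leqslant1$ is all that \eqref{CC} asks.
\end{itemize}
Equivalently, you could normalize the eigenvector $v$ to length one and use the pointwise bound $|BM^{-1}v|\geqslant\|MB^{-1}\|^{-1}$ together with $\|w_2\|_{L^2}=\|w_1\|_{L^2}$.

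Two smaller points:
\begin{itemize}
\item Under \eqref{cond} one has $k<\min\{c_2,c_5-\tau\}$, not $k>\min$. That is exactly why your second case in Step~4 gives $\lesssim\gamma^{-k/\min\{c_2,c_5-\tau\}}$ directly.
\item The main centre sits at $0$ not by ``renormalizing the index'', which would change the phase $\rho(E)$. It sits there because $e^{-\pi i\langle n_0,x\rangle}B_{11}$ is $\mathbb{T}^d$-periodic with $C^k$-norm $\lesssim(1+|n_0|)^k\|\widetilde B\|_k$, which your Peetre-type estimate already expresses.
\end{itemize}
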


\begin{remark}
    In Theorem~\ref{goodeig}, the vector $n^* \in \mathbb{Z}^{d}$ corresponds to the resonant site associated with the final step in reducibility. For further details, see Theorem~\ref{Ckreducibility}. 
\end{remark}
	
\begin{proof} 
By \eqref{rot2}, \eqref{red} and \eqref{BRY}, we have
\begin{equation}\label{rot3}
    \rho(\bar{A})=\rho(E)-\frac{\langle n_0+n^*, \alpha \rangle}{2} \bmod \mathbb{Z},
\end{equation}
where $n_0=\deg \widetilde{B}$. 

Since $E \in \Sigma_{V,\alpha}$, the constant cocycle $(\alpha,\bar{A})$ is not uniformly hyperbolic. The last estimate in \eqref{est} excludes the parabolic cases. Hence $\bar{A}$ is elliptic. The principal elliptic logarithm specified in Remark~\ref{normalformconvention} satisfies $\rho(\bar{A})t>0$.

We choose unitary matrix $U \in \mathrm{SL}(2,\mathbb{C})$ such that
    \begin{equation*}
        U M\bar{A} M^{-1}U^{-1}
        =\begin{pmatrix}
            e^{2\pi i\rho(\bar{A})} & c \\
            0 & e^{-2\pi i\rho(\bar{A})}
        \end{pmatrix}.
    \end{equation*}
Define $B_1(x) \coloneq B(x)M^{-1}U^{-1} \in C^{k}(2\mathbb{T}^d, \mathrm{SL}(2,\mathbb{C}))$, then
\begin{equation}\label{expansion}
    S_E^V(x) \, B_1(x)=B_1(x+\alpha)
    \begin{pmatrix}
        e^{2\pi i\rho(\bar{A})} & c \\
        0 & e^{-2\pi i\rho(\bar{A})}
    \end{pmatrix}.
\end{equation}
Note that $B_1$ is an admissible complex conjugacy induced by the real conjugacy $B$, and we define its inherited degree by $\deg B_1 \coloneq \deg B$. 

Let
\begin{equation*}
    B_1(x)=\begin{pmatrix} 
        b_{11}(x) & b_{12}(x) \\ 
        b_{21}(x) & b_{22}(x) 
    \end{pmatrix}.
\end{equation*}
By expanding \eqref{expansion}, we obtain the following system:
\begin{equation*}
    \begin{cases}
        (E-V(x)) \, b_{11}(x) = b_{21}(x)+b_{11}(x+\alpha) \, e^{2\pi i \rho(\bar{A})}, \\
        b_{11}(x)=b_{21}(x+\alpha) \, e^{2\pi i \rho(\bar{A})}.
    \end{cases}
\end{equation*}
Substituting the second equation into the first yields
\begin{equation}\label{dualfun}
    (E-V(x))b_{11}(x)=b_{11}(x-\alpha)e^{-2\pi i \rho(\bar{A})}+b_{11}(x+\alpha)e^{2\pi i \rho(\bar{A})}.
\end{equation}

Define
\begin{equation}\label{z11}
    z_{11}(x) \coloneq e^{-2\pi i \langle n_0+n^*, x \rangle/2} b_{11}(x).
\end{equation}
Applying \eqref{rot3} and taking the Fourier transform on both sides of \eqref{dualfun}, we obtain
\begin{equation}\label{eigenz}
    \sum_{m \in \mathbb{Z}^d} \widehat{V}_{m} \hat{z}_{11}(n-m)+2\cos 2\pi(\rho(E)+\langle n, \alpha\rangle)\hat{z}_{11}(n)=E\hat{z}_{11}(n).
\end{equation}
By the standing admissibility convention \eqref{admissibleparity} and $\deg B_{1}=n_{0}+n^{*}$, 
one has
\begin{equation*}
	\begin{split}
		z_{11}(x+l) & =e^{-\pi i\langle n_{0}+n^{*},x+l\rangle}b_{11}(x+l) \\
		& =e^{-\pi i\langle n_{0}+n^{*},x\rangle}(-1)^{\langle n_{0}+n^{*},l\rangle}(-1)^{\langle n_{0}+n^{*},l\rangle}b_{11}(x)=z_{11}(x).
	\end{split}
\end{equation*}
Therefore, $z_{11}$ is $\mathbb{T}^{d}$-periodic, and its Fourier coefficients are indexed by $\mathbb{Z}^{d}$.

In the following, we analyze the structure of $\hat{z}_{11}(n)$ and establish decay estimates for $|\hat{z}_{11}(n)|$. For convenience, we denote
\begin{equation*}
    \begin{split}
        & \widetilde{B}(x)M^{-1}= \begin{pmatrix} 
            B_{11}(x) & B_{12}(x) \\ 
            B_{21}(x) & B_{22}(x) 
        \end{pmatrix}, \\
        & Me^{Y(x)}M^{-1}= \begin{pmatrix} 
            Y_{11}(x) & Y_{12}(x) \\ 
            Y_{21}(x) & Y_{22}(x) 
        \end{pmatrix}, \quad 
        U^{-1}= \begin{pmatrix} 
            U_{11} & U_{12} \\ 
            U_{21} & U_{22} 
        \end{pmatrix}.
    \end{split}
\end{equation*}
Then $B_1(x)$ can be rewritten as
\begin{equation}\label{B1x}
    B_1(x) = \widetilde{B}(x)M^{-1} \cdot M R_{\langle n^*, x \rangle/2} M^{-1} \cdot M e^{Y(x)} M^{-1} \cdot U^{-1}.
\end{equation}
Since
    \begin{equation*}
        MR_{\langle n^*,x\rangle/2}M^{-1}
        =\begin{pmatrix}
            e^{\pi i\langle n^*,x\rangle} & 0 \\
            0 & e^{-\pi i\langle n^*,x\rangle}
        \end{pmatrix}.
    \end{equation*}
A direct computation gives  
\begin{equation*}
    \begin{split}
        z_{11}(x) &= e^{-2\pi i \langle n_0 + n^*, x \rangle/2} b_{11}(x) \\
        &= (U_{11} Y_{11}(x) + U_{21} Y_{12}(x)) B_{11}(x) e^{-\pi i \langle n_0, x \rangle} \\
        &\quad\ + (U_{11} Y_{21}(x) + U_{21} Y_{22}(x)) B_{12}(x) e^{-\pi i \langle n_0, x \rangle} e^{-2\pi i \langle n^*, x \rangle}. 
    \end{split}
\end{equation*}
By \eqref{decay}, \eqref{est}, and \eqref{B1x} we have 
\begin{equation*}
    \begin{split}
        |\hat{z}_{11}(n)| &\lesssim_{k,d}  \|U\| \|\widetilde{B}\|_{k}\|e^{-\pi i \langle n_0, x \rangle}\|_{k} (1+|n|)^{-k}\\
        &\quad\ + \|U_{11} Y_{21} + U_{21} Y_{22} \|_{k} \|\widetilde{B}\|_{k} \|e^{-\pi i \langle n_0, x \rangle}\|_{k} (1+|n+n^{*}|)^{-k} \\
        &\lesssim_{k,d} \|U\| \|\widetilde{B}\|_{k}(1+|n_{0}|)^{k} (1+|n|)^{-k} \\
        &\quad\ + \|U_{11} Y_{21} + U_{21} Y_{22} \|_{k} \|\widetilde{B}\|_{k} (1+| n_{0}|)^{k}(1+|n+n^{*}|)^{-k},
    \end{split}
\end{equation*}

Now we can construct the normalized eigenfunctions. Let
\begin{equation*}
    \mathcal{E}_{\gamma}^{\tau}=\{E \in \Sigma_{V,\alpha} \colon \rho(E) \in \Theta_{\gamma}^{\tau}\}. 
\end{equation*}
For any $E \in \mathcal{E}_{\gamma}^{\tau}$, we define
\begin{equation}\label{uEn}
    u_E(n)=\frac{\hat{z}_{11}(n)}{\|\hat{z}_{11}\|_{\ell^2}}, \quad n \in \mathbb{Z}^d.
\end{equation}
Then combining \eqref{eigenz} with \eqref{uEn} implies $\{u_E(n)\}_{n \in \mathbb{Z}^d}$ is a normalized eigenfunction of the long-range operator $L_{V, \alpha, \rho(E)}$. To get the quantitative estimate for $|u_{E}(n)|$, we need the following result:
\begin{Lemma}[{\cite[Lemma 4.2]{avila2017sharp}}]\label{lem3.1}
    For the matrix $B_1$ defined above, one has
    \begin{equation*}
        \|b_{11}\|_{L^{2}} \geqslant (2\|B_1\|_0)^{-1}.
    \end{equation*}
\end{Lemma}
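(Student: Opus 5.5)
This is Lemma 4.2 of \cite{avila2017sharp}. I would prove it from the conjugacy equation \eqref{expansion} and the determinant identity, without using any regularity beyond continuity. The first step is to express $b_{21}$ in terms of $b_{11}$. The second row of \eqref{expansion} gives $b_{11}(x)=b_{21}(x+\alpha)e^{2\pi i\rho(\bar{A})}$. Hence $|b_{21}(x)|=|b_{11}(x-\alpha)|$, and by invariance of Lebesgue measure under translation,
\begin{equation*}
\|b_{21}\|_{L^2}=\|b_{11}\|_{L^2}.
\end{equation*}
These norms are taken over $2\mathbb{T}^d$ with normalized measure. Equivalently they may be taken over $\mathbb{T}^d$, since $|b_{ij}|$ is $\mathbb{T}^d$-periodic by \eqref{admissibleparity}.

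The second step uses the determinant. Since $B_1\in \mathrm{SL}(2,\mathbb{C})$ pointwise, we have $b_{11}b_{22}-b_{12}b_{21}=1$ for every $x$. Therefore
\begin{equation*}
1\leqslant |b_{11}(x)||b_{22}(x)|+|b_{12}(x)||b_{21}(x)|\leqslant \|B_1\|_0\big(|b_{11}(x)|+|b_{21}(x)|\big),
\end{equation*}
because each entry is bounded by the operator norm.

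The third step integrates this inequality. Integrating over the torus and applying Cauchy--Schwarz gives
\begin{equation*}
1\leqslant \|B_1\|_0\big(\|b_{11}\|_{L^1}+\|b_{21}\|_{L^1}\big)\leqslant \|B_1\|_0\big(\|b_{11}\|_{L^2}+\|b_{21}\|_{L^2}\big).
\end{equation*}
By the first step the right-hand side equals $2\|B_1\|_0\|b_{11}\|_{L^2}$, which is the claimed bound $\|b_{11}\|_{L^2}\geqslant (2\|B_1\|_0)^{-1}$.

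The only point needing care is bookkeeping of the domain. $B_1$ lives on $2\mathbb{T}^d$, so we must check that the translation $x\mapsto x-\alpha$ preserves the normalized measure there. It does, since it is a translation of $2\mathbb{T}^d$. We must also use the same normalized measure consistently in every integral. No obstacle beyond this is expected.
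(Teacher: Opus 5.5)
Your proof is correct. It is the standard argument behind \cite[Lemma 4.2]{avila2017sharp}, which the paper cites without reproducing: the relation $b_{11}(x)=e^{2\pi i\rho(\bar{A})}b_{21}(x+\alpha)$ from the upper-triangular conjugacy gives $\|b_{21}\|_{L^2}=\|b_{11}\|_{L^2}$, and $\det B_1=1$ gives $1\leqslant \|B_1\|_0(|b_{11}|+|b_{21}|)$, which after integration and Cauchy--Schwarz yields exactly the factor $2$. Your remark that $|b_{ij}|$ is $\mathbb{T}^d$-periodic by \eqref{admissibleparity}, so that the normalized norms on $2\mathbb{T}^d$ and $\mathbb{T}^d$ agree, correctly matches the normalization used in the Parseval step $\|\hat{z}_{11}\|_{\ell^2}=\|b_{11}\|_{L^2}$.
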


Apply Lemma \ref{lem3.1} to $B_{1}$ in \eqref{B1x}, we obtain
\begin{equation*}
    \|\hat{z}_{11}\|_{\ell^2}=\|z_{11}\|_{L^{2}}=\|b_{11}\|_{L^2} \geqslant  (2\|B_1\|_0 )^{-1} \geqslant (6\|\widetilde{B}\|_0 )^{-1}.
\end{equation*}
Consequently, we have the estimate
\begin{equation*}
    \begin{split}
        |u_E(n)| & \lesssim_{k,d} \|U\| \|\widetilde{B}\|_{0}\|\widetilde{B}\|_{k} (1+|n_0|)^{k} (1+|n|)^{-k} \\
        & \quad +\|U_{11} Y_{21} + U_{21} Y_{22} \|_{k} \|\widetilde{B}\|_{0}\|\widetilde{B}\|_{k}(1+|n_0|)^{k}(1+|n+n^*|)^{-k}.
    \end{split}
\end{equation*}

We distinguish three cases according to the values of $\nu,t$ and $\rho(\alpha,\bar{A})$.

\medskip

\textbf{Case 1:} $4|\nu|<(1-\|Y\|_{k})|\rho(\bar{A})|$ and $\rho(\bar{A})t>0$. In this case, the following lemma shows that one may choose $U \in \mathrm{SL}(2,\mathbb{C})$ close to the identity matrix.
\begin{Lemma}[{\cite[Lemma 4.1]{ge2019exponential}}]\label{lem4.2}
Assume that
\begin{equation*}
    MAM^{-1}=\exp \begin{pmatrix} 
                    it & \nu \\ 
                    \bar{\nu} & -it 
                \end{pmatrix} \in \mathrm{SU}(1,1),
\end{equation*}
with $\mathrm{spec}(A)=\{e^{2\pi i\rho}, e^{-2\pi i\rho}\}$. If $4|\nu| \leqslant |\rho|$ and $\rho t > 0$, then there exists $U \in \mathrm{SL}(2, \mathbb{C})$ with $\|U-\mathrm{Id}\| \leqslant \frac{|\nu|}{|\rho|}$ such that
\begin{equation*}
    UMAM^{-1}U^{-1}=
    \begin{pmatrix} 
        e^{2\pi i\rho} & 0 \\ 
        0 & e^{-2\pi i\rho} 
    \end{pmatrix}.
\end{equation*}
\end{Lemma}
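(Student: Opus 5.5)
The plan is to diagonalize the logarithm $P=\begin{pmatrix} it & \nu\\ \bar\nu & -it\end{pmatrix}$ directly, instead of working with $MAM^{-1}=e^{P}$. The eigenvectors of $P$ diagonalize its exponential as well, so it is enough to find a near-identity eigenbasis of $P$.

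\textbf{Step 1: eigenvalues.} The eigenvalues of $P$ are $\pm\sqrt{|\nu|^2-t^2}$. Since $\mathrm{spec}(A)=\{e^{\pm 2\pi i\rho}\}$ with $\rho$ real and nonzero, $A$ is elliptic. With the principal logarithm (Remark~\ref{normalformconvention}) we take $t^2>|\nu|^2$. The eigenvalues are then $\pm i\lambda$, where
\begin{equation*}
\lambda=\operatorname{sign}(t)\sqrt{t^2-|\nu|^2}=2\pi\rho .
\end{equation*}
The sign choice is exactly where the hypothesis $\rho t>0$ enters: it puts $e^{2\pi i\rho}$ in the upper-left slot. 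It also gives $|t|\geqslant|\lambda|=2\pi|\rho|$, and $t$ and $\lambda$ have the same sign, so $|t+\lambda|\geqslant 4\pi|\rho|$.

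\textbf{Step 2: eigenvectors.} The eigenvector for $i\lambda$ is $(1,w)^{T}$ and the eigenvector for $-i\lambda$ is $(w',1)^{T}$. Solving the linear equations gives
\begin{equation*}
w=\frac{\bar\nu}{i(t+\lambda)},\qquad w'=\frac{i\nu}{t+\lambda}.
\end{equation*}
Because $t$ and $\lambda$ share a sign, no cancellation occurs in the denominators, and Step~1 yields $|w|,|w'|\leqslant \frac{|\nu|}{4\pi|\rho|}$. Under $4|\nu|\leqslant|\rho|$ this is at most $\frac{1}{16\pi}$.

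\textbf{Step 3: the conjugacy.} Let $W$ be the matrix with columns $(1,w)^{T}$ and $(w',1)^{T}$, and set
\begin{equation*}
U^{-1}=(1-ww')^{-1/2}\,W,
\end{equation*}
so that $\det U=1$. Then $U P U^{-1}=\mathrm{diag}(i\lambda,-i\lambda)$, and exponentiating gives the required diagonal form $\mathrm{diag}(e^{2\pi i\rho},e^{-2\pi i\rho})$.

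\textbf{Step 4: the norm bound.} It remains to show $\|U-\mathrm{Id}\|\leqslant |\nu|/|\rho|$. Write $\epsilon=\frac{|\nu|}{4\pi|\rho|}$. Then $\|W-\mathrm{Id}\|\leqslant \epsilon$ and $|ww'|\leqslant \epsilon^2$. Since
\begin{equation*}
U=(1-ww')^{1/2}\,W^{-1},\qquad W^{-1}=\frac{1}{1-ww'}\begin{pmatrix}1 & -w'\\ -w & 1\end{pmatrix},
\end{equation*}
we get $\|U-\mathrm{Id}\|\leqslant \epsilon+O(\epsilon^2)$. The constant $\frac{1}{4\pi}$ leaves ample room below $1$, so this is at most $|\nu|/|\rho|$. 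The only real care is in this constant bookkeeping; the rest is explicit computation.
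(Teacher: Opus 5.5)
Your proof is correct. The paper gives no proof of this lemma; it imports it directly from \cite[Lemma 4.1]{ge2019exponential}, so there is no internal argument to compare with. Your explicit eigenvector computation for the logarithm is a complete, self-contained proof.

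The one point you should state explicitly is the normalization $\sqrt{t^2-|\nu|^2}=2\pi|\rho|$. The condition $\mathrm{spec}(A)=\{e^{\pm 2\pi i\rho}\}$ fixes $\rho$ only up to sign and integer shifts. The lemma is false for other representatives: replacing $\rho$ by $\rho+N$ keeps every hypothesis but forces $U\to\mathrm{Id}$, which is impossible when $\nu\neq 0$. So it is the principal-logarithm convention with $\rho\in[-1/2,1/2]$ that gives $\lambda=2\pi\rho$. Once that is in place, $t^2>|\nu|^2$ is forced rather than chosen: otherwise $e^{P}$ has real positive spectrum, which is incompatible with $\rho\neq 0$.

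It is also worth recording that $w'=\bar w$. Hence $U^{-1}=(1-|w|^2)^{-1/2}\begin{pmatrix}1&\bar w\\ w&1\end{pmatrix}\in\mathrm{SU}(1,1)$, and the square root is taken of a positive real number. This makes the norm bound explicit: $\|U-\mathrm{Id}\|\leqslant(1-\epsilon^2)^{-1/2}(1+\epsilon)-1\leqslant 2\epsilon$, comfortably below $4\pi\epsilon=|\nu|/|\rho|$.
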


Applying Lemma~\ref{lem4.2} to the matrix $\bar{A}$, there exists $U \in \mathrm{SL}(2,\mathbb{C})$ such that
\begin{equation*}
    UM\bar{A}M^{-1}U^{-1}= 
    \begin{pmatrix}
        e^{2\pi i\rho(\bar{A})} & 0 \\
        0 & e^{-2\pi i\rho(\bar{A})}
    \end{pmatrix},
\end{equation*}
with $\|U-\mathrm{Id}\| \leqslant \frac{|\nu|}{|\rho(\bar{A})|}$. Moreover, $|U_{11}-1| \leqslant \frac{|\nu|}{|\rho(\bar{A})|}$ and $|U_{21}| \leqslant \frac{|\nu|}{|\rho(\bar{A})|}<\frac{1}{4}$. Hence,
\begin{equation*}
    \begin{split}
        |u_E&(n)| \lesssim_{k,d} \|\widetilde{B}\|_{0}\|\widetilde{B}\|_{k} (1+|n_0|)^{k} (1 + |n|)^{-k} \\
        & \quad +\|\widetilde{B}\|_{0}\|\widetilde{B}\|_{k} (1+|n_0|)^{k} \bigg(\|Y\|_{k} + \frac{4|\nu|}{|\rho(\bar{A})|} \bigg) (1 + |n + n^*|)^{-k},
    \end{split}
\end{equation*}
which shows that $u_E$ is a $(k, n^*, C, C_{|n^*|})$-good eigenfunction. 

\medskip

\textbf{Case 2:} $4|\nu|<(1-\|Y\|_{k})|\rho(\bar{A})|$ and $\rho(\bar{A})t<0$. This case cannot occur by the choice of the canonical elliptic logarithm.

\medskip

\textbf{Case 3:} $4|\nu| \geqslant (1-\|Y\|_{k}) \, |\rho(\bar{A})|$. Since the matrix $U$ chosen before the case distinction is unitary, one has $|U_{11}|, \, |U_{21}| \leqslant 1$. Therefore,
\begin{equation*}
    |U_{11}| \|Y_{21}\|_{k} + |U_{21}| \|Y_{22}\|_{k} \lesssim_{k,d} 1.
\end{equation*}
It follows that
\begin{equation*}
    \begin{split}
        |u_E(n)| & \lesssim_{k,d} \|\widetilde{B}\|_{0}\|\widetilde{B}\|_{k} (1+|n_0|)^{k} (1 + |n|)^{-k} \\
        & \quad +\|\widetilde{B}\|_{0}\|\widetilde{B}\|_{k} (1+|n_0|)^{k} (1 + |n + n^*|)^{-k},
    \end{split}
\end{equation*}
so $u_E$ is a $(k, n^*, C, C_{|n^*|})$-good eigenfunction.

\medskip

Combining \textbf{Cases 1--3}, the long-range operator $L_{V,\alpha,\rho(E)}$ admits a 
\begin{equation*}
    \bigg(k, \, n^*, \, \tilde{C}(k,d)\|\widetilde{B}\|_{0}\|\widetilde{B}\|_{k} (1+|n_0|)^{k}, \, \min\bigg\{1, \, \|Y\|_{k} + \frac{4|\nu|}{|\rho(\bar{A})|} \bigg\} \bigg)
\end{equation*}
-good eigenfunction. 

If $n^{*}=0$, the two localization centers coincide, and the required estimate follows directly from $C_{|n^{*}|} \leqslant 1 $, without using any bound on $\nu$. Hence it remains to consider $n^{*} \neq 0$. 

By \eqref{est}, 
\begin{equation*}
    \begin{split}
        & C \coloneq  \tilde{C}(k,d)\|\widetilde{B}\|_{0}\|\widetilde{B}\|_{k} (1+|n_0|)^{k} \leqslant C_1(k,d)\gamma^{-(c_{3}+c_{4}k)}, \\
        & C_{|n^*|} \coloneq \min\bigg\{1,\, \|Y\|_k + \frac{4|\nu|}{|\rho(\bar{A})|} \bigg\} \\
        &\quad \quad \leqslant \min\bigg\{1, (1+|n^{*}|)^{-c_{2}}+ \frac{4(1+|n^*|)^{-c_5+\tau}}{16^{-\tau}\gamma} \bigg\} \\
        &\quad\quad \leqslant \min \bigg\{1, \frac{100^{\tau}}{\gamma} (1+|n^{*}|)^{-\min(c_{2}, c_{5}-\tau)}\bigg\}.
    \end{split}
\end{equation*}
This proves \eqref{CC}. 

Estimate \eqref{A2} holds by considering the following two cases.

\textbf{Case 1:} If $\frac{100^{\tau}}{\gamma} (1+|n^*|)^{-\min\{c_2, c_5-\tau\}} \leqslant 1$, then $c_{5}>\tau$ and $\frac{k}{\min\{c_2, c_5-\tau\}}<1$ imply 
\begin{equation*}
    \begin{split}
        C_{|n^*|} (1+|n^*|)^{k} &\leqslant \frac{100^{\tau}}{\gamma} (1+|n^*|)^{-\min\{c_2, c_5-\tau\}}(1+|n^*|)^{k} \\
        &\lesssim_{\tau} \gamma^{-1} \gamma^{\frac{\min\{c_2, c_5-\tau\} - k}{\min\{c_2, c_5-\tau\}}} \\
        &\lesssim_{\tau} \gamma^{-\frac{k}{\min\{c_2, c_5-\tau\}}}. 
    \end{split}
\end{equation*}

\textbf{Case 2:} If $\frac{100^{\tau}}{\gamma} (1+|n^*|)^{-\min\{c_2, c_5-\tau\}} \geqslant 1$, then $c_{5}>\tau$ implies 
\begin{equation*}
    C_{|n^*|} (1+|n^*|)^{k} \leqslant (1+|n^*|)^{k} \lesssim_\tau \gamma^{-\frac{k}{\min\{c_2, c_5-\tau\}}}. 
\end{equation*}
Therefore, combining two cases with  \eqref{cond}, we obtain 
\begin{equation*}
    C^2\big( 1 + C_{|n^*|} (1+|n^*|)^{k} \big) \lesssim_{k,d,\tau} \gamma^{-2(c_3+c_4k)} \gamma^{-\frac{k}{\min\{c_2,c_5-\tau\}}} = \gamma^{-c}.
\end{equation*}
This proves \eqref{A2}.

To see \eqref{decayuE}, note that $C_{|n^{*}|} \leqslant 1$, one has
    \begin{equation*}
        \begin{split}
            |u_{E}(n)| & \lesssim_{k,d} \|\widetilde{B}\|_{0}\|\widetilde{B}\|_{k} (1+|n_0|)^{k} \big((1+|n|)^{-k}+(1+|n+n^{*}|)^{-k}\big) \\
            & \lesssim_{k,d} \gamma^{-(c_{3}+c_{4}k)} \big((1+|n|)^{-k}+(1+|n+n^{*}|)^{-k}\big) \\
            & \lesssim_{k,d} \gamma^{-(c_{3}+c_{4}k)}(1+|n|)^{-k}\bigg(1+\frac{(1+|n|)^{k}}{(1+|n+n^{*}|)^{k}}\bigg).
        \end{split}
    \end{equation*}
    According to $|n^{*}| \leqslant \gamma^{-c_{1}}$, it is easy to see that, for $|n| \geqslant (1+\gamma^{-c_{1}})$,
    \begin{equation*}
        \begin{split}
            \frac{1+|n|}{1+|n+n^{*}|} & \leqslant \frac{1+|n|}{1+|n|-|n^{*}|} \leqslant \frac{1+|n|}{1+|n|-|n|/(1+\gamma^{c_{1}})} \\ 
            & \leqslant \frac{1+\gamma^{c_{1}}}{\gamma^{c_{1}}} \leqslant 2\gamma^{-c_{1}}. 
        \end{split}
    \end{equation*}
    And for $|n| \leqslant (1+\gamma^{-c_{1}})$,
    \begin{equation*}
        \frac{1+|n|}{1+|n+n^{*}|} \leqslant 1+|n| \leqslant 3\gamma^{-c_{1}}.
    \end{equation*}
    Thus 
    \begin{equation*}
        |u_{E}(n)| \leqslant C_2(k,d)\gamma^{-(c_{3}+c_{4}k+c_{1}k)} (1+|n|)^{-k}.  
    \end{equation*}
    This proves \eqref{decayuE}. This concludes the proof of Theorem~\ref{goodeig}.	 
\end{proof}

\begin{Lemma}\label{branchlemma}
    Let $E \in \Sigma_{V,\alpha}$ with $\rho(E)\in \Theta_{\gamma}^{\tau}$, and let $u_E^{+}$ be the normalized eigenfunction of $L_{V,\alpha,\rho(E)}$ constructed in Theorem~\ref{goodeig}. Define
    \begin{equation}\label{uminus}
        u_E^{-}(n)=\overline{u_E^{+}(-n)},\quad n \in \mathbb{Z}^{d}.
    \end{equation}
    Then $u_E^{-}$ is a normalized eigenfunction of $L_{V,\alpha,-\rho(E)}$ with eigenvalue $E$. Moreover, $u_E^{-}$ satisfies the same one-site decay estimate as $u_E^{+}$. If $u_E^{+}$ is $(k,n^*,C,C_{|n^*|})$-good, then $u_E^{-}$ is $(k,-n^*,C,C_{|n^*|})$-good.
\end{Lemma}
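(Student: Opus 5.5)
The plan is a direct computation: apply the reflection $n\mapsto -n$ and complex conjugation to the eigenvalue equation \eqref{eigenz}, using two facts. First, $V$ is real-valued, so $\overline{\widehat{V}_m}=\widehat{V}_{-m}$. Second, cosine is even, and $E$ is real.

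\textbf{Eigenvalue equation.} Write $u^+=u_E^+$, which satisfies
\[
\sum_{m}\widehat V_m u^+(n-m)+2\cos2\pi(\rho(E)+\langle n,\alpha\rangle)u^+(n)=Eu^+(n)
\]
for all $n\in\mathbb{Z}^d$. Replace $n$ by $-n$ and take complex conjugates. The hopping term becomes $\sum_m \overline{\widehat V_m}\,\overline{u^+(-n-m)}=\sum_m \widehat V_{-m}\, u^-(n+m)$. Reindexing $m\to -m$ gives $\sum_m \widehat V_m u^-(n-m)$. The potential term becomes $2\cos2\pi(\rho(E)-\langle n,\alpha\rangle)u^-(n)$. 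Since cosine is even, this equals $2\cos2\pi(-\rho(E)+\langle n,\alpha\rangle)u^-(n)$. Because $E\in\mathbb{R}$, the right-hand side is $E u^-(n)$. So $L_{V,\alpha,-\rho(E)}u^-=Eu^-$.

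\textbf{Normalization.} The map $n\mapsto -n$ is a bijection of $\mathbb{Z}^d$, and conjugation preserves moduli. Hence $\|u^-\|_{\ell^2}=\|u^+\|_{\ell^2}=1$, and $u^-$ is a normalized eigenfunction.

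\textbf{Decay estimates.} These follow from $|u^-(n)|=|u^+(-n)|$ together with $|-n|=|n|$. The one-site bound \eqref{decayuE} transfers verbatim. For the two-center bound in Definition~\ref{def3.1}, note that $|-n+n^*|=|n+(-n^*)|$. Therefore
\[
|u^-(n)|\leqslant C\big((1+|n|)^{-k}+C_{|n^*|}(1+|n+(-n^*)|)^{-k}\big).
\]
This says $u^-$ is $(k,-n^*,C,C_{|-n^*|})$-good, and $C_{|-n^*|}=C_{|n^*|}$ because that constant depends only on $|n^*|$.

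There is no real obstacle here. The only points needing care are the reality of $V$, which gives $\overline{\widehat V_m}=\widehat V_{-m}$, and the realness of $E$, which holds because $E\in\Sigma_{V,\alpha}\subset\mathbb{R}$.
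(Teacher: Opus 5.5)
Your proposal is correct and follows the paper's proof essentially verbatim: conjugate the eigenvalue equation, reflect $n\mapsto -n$, use $\widehat{V}_{-m}=\overline{\widehat{V}_{m}}$ (from the reality of $V$) together with the evenness of cosine and the realness of $E$, and read off the decay estimates from $|u_E^{-}(n)|=|u_E^{+}(-n)|$. You spell out the reindexing $m\mapsto -m$, the normalization, and the relabeling $n^*\mapsto -n^*$ in more detail than the paper does, but the argument is the same.
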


\begin{proof}
    Since $V$ is real-valued, one has $\widehat{V}_{-m}=\overline{\widehat{V}_{m}}$. Taking the complex conjugate of
    \begin{equation*}
        \sum_{m \in \mathbb{Z}^{d}}\widehat{V}_{m}u_E^{+}(n-m)+2\cos 2\pi(\rho(E)+\langle n,\alpha\rangle)u_E^{+}(n)=E u_E^{+}(n),
    \end{equation*}
    and then replacing $n$ by $-n$, we obtain
    \begin{equation*}
        \sum_{m \in \mathbb{Z}^{d}}\widehat{V}_{m}u_E^{-}(n-m)+2\cos 2\pi(-\rho(E)+\langle n,\alpha\rangle)u_E^{-}(n)=E u_E^{-}(n).
    \end{equation*}
    Thus $u_E^{-}$ is a normalized eigenfunction of $L_{V,\alpha,-\rho(E)}$ with eigenvalue $E$. The decay estimates follow immediately from \eqref{uminus}.
\end{proof}

\section{Arithmetic polynomial spectral localization}

In this section, we prove Theorem~\ref{main1}\ref{item:psl}. More precisely, we establish the following result:

\begin{theorem}\label{APSL}
    Let $\alpha \in \mathrm{DC}_{d}(\kappa, \tau)$ and $k>d$. Suppose that, for every $\gamma \in (0,1)$ and every $E \in \Sigma_{V,\alpha}$ satisfying $\rho(E) \in \Theta_{\gamma}^{2\tau}$ the cocycle $(\alpha, S_{E}^{V})$ is $C^{k}$-reducible, and that \eqref{red}--\eqref{est} hold with the same $\gamma$ and with $\tau$ replaced by $2\tau$ in \eqref{est}. Assume moreover that
    \begin{equation}\label{fulluniformcond}
        2\tau(c_3+c_4k+c_1k)<k/2.
    \end{equation}
    Then the long-range operator $L_{V,\alpha,\theta}$ exhibits arithmetic $k$-PSL for every $\theta \in \Theta^{\tau}$.
\end{theorem}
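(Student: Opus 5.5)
The plan is to combine Aubry duality with a completeness argument in the spirit of \cite{ge2020arithmetic}. Fix $\gamma\in(0,1)$ and $\theta\in\Theta^{\tau}_{\gamma}$.

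\textbf{Step 1: a family of eigenfunctions indexed by $m\in\mathbb{Z}^d$.} For $m\in\mathbb{Z}^d$ put $\theta_m=\theta+\langle m,\alpha\rangle$. For every $n$,
\[
2\theta_m-\langle n,\alpha\rangle=2\theta-\langle n-2m,\alpha\rangle .
\]
Combined with $1+|n-2m|\leqslant(1+|n|)(1+2|m|)$, this gives $\theta_m\in\Theta^{2\tau}_{\gamma_m}$ with
\[
\gamma_m=\gamma(1+2|m|)^{-\tau}.
\]
By \eqref{ids}, $N_{V,\alpha}$ is continuous and nondecreasing. Its range on $\Sigma_{V,\alpha}$ is all of $[0,1]$. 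So for each sign $\pm$ and each $m$ there is an energy $E=E^{\pm}_m\in\Sigma_{V,\alpha}$ with $\rho(E)\equiv\pm\theta_m$.

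For such $E$ the hypotheses of Theorem~\ref{APSL} apply with $\gamma_m$. Theorem~\ref{goodeig}\ref{item:res1} then gives an eigenfunction $u^{+}_E$ of $L_{V,\alpha,\theta_m}$, and Lemma~\ref{branchlemma} handles the $-$ sign. Translation intertwines $L_{V,\alpha,\theta_m}$ with $L_{V,\alpha,\theta}$, so $\psi_m(n):=u_{E_m}(n-m)$ is a normalized eigenfunction of $L_{V,\alpha,\theta}$. By \eqref{decayuE} it satisfies
\[
|\psi_m(n)|\lesssim \gamma^{-a}(1+|m|)^{\tau a}(1+|n-m|)^{-k},\qquad a=c_3+c_4k+c_1k .
\]
Condition \eqref{fulluniformcond} says $\tau a<k/4$, so the prefactor $(1+|m|)^{\tau a}$ is much smaller than the decay rate $k$. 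For fixed $n$ this makes $\sum_m|\psi_m(n)|^2$ converge uniformly, using $k>d$.

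\textbf{Step 2: orthogonality.} Distinct $m$ give distinct values $\pm\theta_m$, since $(1,\alpha)$ is rationally independent and $\theta$ is non-rational. Hence they give distinct rotation numbers, and therefore distinct eigenvalues $E_m$, because $\rho$ is strictly monotone on the spectrum. The $\psi_m$ are therefore orthonormal. If two energies coincide for the two signs, I would check by hand that this happens only in the excluded rational-phase case.

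\textbf{Step 3: completeness, which is the main obstacle.} By Bessel's inequality,
\[
F(\theta):=\sum_m|\langle\psi_m,\delta_0\rangle|^2\leqslant 1 .
\]
To get equality I would use the auxiliary measure $\mu(\cdot)=\int_{\mathbb{T}}\mu^{\delta_0}_{L_{V,\alpha,\theta}}(\cdot)\,d\theta$. The duality identity identifies it with the density of states $dN_{V,\alpha}$ of $H_{V,\alpha,x}$. The change of variables $\theta\leftrightarrow\rho(E)$, with $d\rho=-\tfrac12\,dN$, turns $\int_{\mathbb{T}}F(\theta)\,d\theta$ into
\[
\int\sum_m|u_E(-m)|^2\,dN(E)=\int\|u_E\|^2\,dN(E)=1 .
\]
Here one uses that $\{E:\rho(E)\in\Theta^{2\tau}\}$ has full $dN$-measure. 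Together with $F\leqslant 1$, this forces $F=1$ for a.e.\ $\theta$.

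To upgrade from a.e.\ $\theta$ to \emph{every} $\theta\in\Theta^{\tau}_\gamma$, I would use the continuity argument of Appendix~A. The uniform estimate from Step~1 shows that the series defining $F$ converges uniformly on $\Theta^{\tau}_\gamma$. The reducibility data, and hence $u_E(n)$, depend continuously on $E$ along $\Theta^{2\tau}_{\gamma'}$, and $E$ depends continuously on $\rho$. So $F$ is continuous on the closed set $\Theta^{\tau}_\gamma$. On that set $F=1$ at Lebesgue density points, and these are dense, so $F\equiv 1$ on $\Theta^{\tau}_\gamma$.

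Applying the same argument to $\delta_n$, equivalently replacing $\theta$ by $\theta+\langle n,\alpha\rangle\in\Theta^{\tau}$, shows that $\{\psi_m\}$ is an orthonormal basis. This gives pure point spectrum, and every eigenfunction is some $\psi_m$, which decays like $C(1+|n|)^{-k}$. Hence $L_{V,\alpha,\theta}$ has arithmetic $k$-PSL for every $\theta\in\Theta^\tau$.
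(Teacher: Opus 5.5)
Your Steps 1--2 and the almost-everywhere part of Step 3 are fine and agree with the paper. The Fubini/translation-invariance computation giving $\int_{\mathbb{T}}F=1$ is exactly Lemma~\ref{dominate}; your detour through $dN$ is unnecessary but harmless.

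\textbf{The gap} is in the upgrade to \emph{every} $\theta\in\Theta^{\tau}_{\gamma}$. You assert that the points of $\Theta^{\tau}_{\gamma}$ where $F=1$ are dense in $\Theta^{\tau}_{\gamma}$. That would require $\Theta^{\tau}_{\gamma}$ to have positive measure in every relative neighbourhood of each of its points, and nothing gives this.
\begin{itemize}
\item $\Theta^{\tau}_{\gamma}$ is closed and nowhere dense, since its complement is a union of open intervals centred at the dense set $\{(\langle l,\alpha\rangle+j)/2\}$.
\item A point of $\Theta^{\tau}_{\gamma}$ that is a common endpoint of two excluded intervals is isolated.
\item Shrinking $\gamma$ at the same exponent does not help. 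For $\theta\in\Theta^{\tau}_{\gamma}$ and $\gamma'<\gamma$, the counting argument bounds the excluded part of $(\theta-\sigma,\theta+\sigma)\setminus\Theta^{\tau}_{\gamma'}$ only by $O(\sigma^{1-d/\tau})$, which is not $O(\sigma)$. This is the remark after Lemma~\ref{hom}.
\end{itemize}
So continuity of $F$ on $\Theta^{\tau}_{\gamma}$ together with $F=1$ a.e.\ gives no information at such $\theta$.

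\textbf{The missing idea} is the homogeneity Lemma~\ref{hom}. Raise the exponent to $\tau+d\leqslant 2\tau$ and lower the constant to $\bar\gamma<\gamma/c(d,\tau)$. Then every $\theta\in\Theta^{\tau}_{\gamma}$ satisfies $|(\theta-\sigma,\theta+\sigma)\cap\Theta^{2\tau}_{\bar\gamma}|\geqslant\sigma$ for all $0<\sigma\leqslant 1/2$. Hence $\theta$ is a limit of phases $\theta_j\in\Theta^{2\tau}_{\bar\gamma}$ with $F(\theta_j)=1$.

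This forces your uniform convergence and continuity statements to hold on the larger set $\Theta^{2\tau}_{\bar\gamma}$, not on $\Theta^{\tau}_{\gamma}$.
\begin{itemize}
\item There one only has $T^m\theta_j\in\Theta^{2\tau}_{\bar\gamma(1+2|m|)^{-2\tau}}$.
\item So the prefactor coming from \eqref{decayuE} is $(1+|m|)^{2\tau a}$ rather than $(1+|m|)^{\tau a}$, with your $a=c_3+c_4k+c_1k$.
\item The uniform tail bound of Lemma~\ref{uniform} then needs $k-2\tau a>d/2$. This is exactly where \eqref{fulluniformcond} is used at full strength.
\item It is also why the reducibility hypotheses are imposed on $\Theta^{2\tau}$ rather than $\Theta^{\tau}$.
\end{itemize}

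With that change your argument becomes the paper's: continuity of the finitely many terms on $\Theta^{2\tau}_{\bar\gamma}$ via Appendix~A, uniform smallness of the tail there, and Bessel's inequality for the reverse bound.
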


By shifting the good eigenfunctions constructed in Theorem~\ref{goodeig}, we obtain a family of good eigenfunctions for the long-range operator $L_{V,\alpha,\theta}$. To establish their completeness, we introduce an auxiliary $\mathcal{R}$-measure.

\subsection{\texorpdfstring{$\mathcal{R}$-measure}{R-measure}}
Let $\alpha \in \mathrm{DC}_d(\kappa, \tau)$. For $(\alpha,V) \in \mathbb{T}^{d}\times C^{k}(\mathbb{T}^d, \mathbb{R})$, define 
\begin{equation*}
    \mathcal{E}^{2\tau} \coloneq \{E \in \Sigma_{V,\alpha} \colon \rho(E) \in \Theta^{2\tau}\}. 
\end{equation*}
Suppose that the cocycle $(\alpha, S_{E}^{V})$ is $C^k$-reducible for all $E \in \mathcal{E}^{2\tau}$. For any such $E$, there exist  $B_1 \in C^k(2\mathbb{T}^d,\mathrm{SL}(2,\mathbb{C}))$ and a constant matrix $\bar{A}\in \mathrm{SL}(2,\mathbb{R})$ such that 
\begin{equation}\label{conj3}
    B_1^{-1}(x+\alpha)S_E^V(x)B_1(x)=
    \begin{pmatrix} 
        e^{2\pi i\rho(\bar{A})} & 0 \\ 
        0 & e^{-2\pi i\rho(\bar{A})} 
    \end{pmatrix}.
\end{equation}
Here and below, the complex diagonalizing conjugacies are understood in the admissible sense: they are obtained from real conjugacies by the fixed matrix $M$ and by constant complex changes of coordinates. Their degrees are defined to be the degrees of the corresponding real conjugacies.

Writing $B_1(x)=\begin{pmatrix} 
                    b_{11}(x) & b_{12}(x) \\ 
                    b_{21}(x) & b_{22}(x) 
                \end{pmatrix}$ 
and defining $z_{11}(x)=e^{-\pi i \langle \deg B_1,x \rangle}b_{11}(x)$, the vector-valued function $u_E$ given by 
\begin{equation}\label{uE}
    u_E \colon \begin{cases}
                  \mathcal{E}^{2\tau} \longrightarrow \ell^{2}(\mathbb{Z}^{d}), \\
                  E \longmapsto \Big\{\frac{\hat{z}_{11}(n)}{\|\hat{z}_{11}\|_{\ell^2}}\Big\}_{n\in\mathbb{Z}^d},  
              \end{cases} 
\end{equation}
is a normalized eigenfunction of the long-range operator $L_{V,\alpha,\rho(E)}$. 
    
The conjugacy $B_1$ is not unique, so the vector $u_E$ is not expected to be uniquely determined as a vector. However, based on Ge-You \cite[Lemma~3.1]{ge2020arithmetic}, we provide demonstration of the following lemma that $u_E$ is uniquely determined up to multiplication by a complex number of modulus one. Consequently, all quantities $|u_E(n)|^2$ are well-defined.

\begin{Lemma}\label{welldefine} 
    For every $E \in \mathcal{E}^{2\tau}$, $u_E$ in \eqref{uE} is well-defined up to a unimodular constant. 
\end{Lemma}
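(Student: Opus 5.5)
The plan is to follow Ge--You: show that any two admissible diagonalizing conjugacies for the same $E$ differ by a constant diagonal matrix, up to a degree shift.

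Let $B_1$ and $B_1'$ be two admissible complex conjugacies satisfying \eqref{conj3} for the same $E\in\mathcal{E}^{2\tau}$, with constant parts $D=\mathrm{diag}(e^{2\pi i\rho},e^{-2\pi i\rho})$ and $D'=\mathrm{diag}(e^{2\pi i\rho'},e^{-2\pi i\rho'})$. Put $W(x)=B_1^{-1}(x)B_1'(x)$. Then
\begin{equation*}
W(x+\alpha)D'=DW(x),
\end{equation*}
and by \eqref{admissibleparity} the map $W$ is admissible with $\deg W=\deg B_1'-\deg B_1=:m$.

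\textbf{Step 1 (Fourier analysis of $W$).} Write $W=(w_{ij})$ and twist each entry by the phase $e^{-\pi i\langle m,x\rangle}$ so that it becomes $\mathbb{T}^d$-periodic. Comparing Fourier coefficients in the equation above gives
\begin{equation*}
\hat w_{ij}(n)\bigl(e^{2\pi i\langle n',\alpha\rangle}e^{2\pi i(\pm\rho'\mp\rho)}-1\bigr)=0,
\end{equation*}
where $n'$ is the index shifted by $m/2$. By \eqref{rot2} we have $\rho'=\rho-\langle m,\alpha\rangle/2 \bmod \mathbb{Z}$.

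\textbf{Step 2 (off-diagonal entries vanish).} For the off-diagonal entries the resonance condition reads $2\rho(E)\equiv\langle \ell,\alpha\rangle \bmod \mathbb{Z}$ for some $\ell\in\mathbb{Z}^d$. This is impossible because $\rho(E)\in\Theta^{2\tau}$, so $w_{12}\equiv w_{21}\equiv 0$.

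\textbf{Step 3 (diagonal entries are single exponentials).} For the diagonal entries, the rational independence of $(1,\alpha)$ allows exactly one nonzero Fourier mode. Hence $w_{11}(x)=c\,e^{\pi i\langle m,x\rangle}$, and $\det W=1$ forces $w_{22}=c^{-1}e^{-\pi i\langle m,x\rangle}$.

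\textbf{Step 4 (conclusion).} It follows that $b'_{11}=c\,e^{\pi i\langle m,x\rangle}b_{11}$. Since $\deg B_1'=\deg B_1+m$, we get $z'_{11}=c\,z_{11}$. Normalizing in $\ell^2$ removes $|c|$, so $u'_E=\frac{c}{|c|}u_E$.

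The main obstacle is Step 3: the degree bookkeeping on $2\mathbb{T}^d$. Specifically, one must confirm that the degree of $W$ is exactly what appears in the surviving exponential of $w_{11}$; that is, one must rule out a diagonal mode shifted by some other $\ell$. I would handle this with the homotopy: $W$ is diagonal, so its real counterpart is homotopic to a rotation $R_{\langle \ell,x\rangle/2}$, and \eqref{deg1} then forces $\ell=m$.
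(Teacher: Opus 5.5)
Your proposal is correct and is essentially the paper's argument. The paper first right-multiplies $B_1$ by the diagonal factor $Q(x)=MR_{\langle \deg\widetilde B_1-\deg B_1,x\rangle/2}M^{-1}$, checks that this leaves $z_{11}$ unchanged, and thereby reduces to a $\mathbb{T}^d$-periodic transition matrix with equal rotation numbers. You absorb the same degree shift by twisting $W$ with $e^{-\pi i\langle m,x\rangle}$. The Fourier analysis is then identical: rational independence handles the diagonal entries, and $\rho(E)\in\Theta^{2\tau}$ kills the off-diagonal ones.

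The step you flag as the main obstacle is already settled by your Step 1. The relation $\rho'\equiv\rho-\langle m,\alpha\rangle/2$ turns the $(1,1)$ equation into $v_{11}(x+\alpha)=v_{11}(x)$ for $v_{11}=e^{-\pi i\langle m,x\rangle}w_{11}$, so only the zero mode survives. You should drop the homotopy fallback: for maps conjugated by constant complex matrices it would only pin down $\ell$ up to sign.
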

\begin{proof}
    For $E \in \mathcal{E}^{2\tau}$. Suppose that two choices of diagonalizing conjugacies are given by
    \begin{equation}\label{Bconj1}
        B_1^{-1}(x+\alpha)S_E^V(x)B_1(x)=
        \begin{pmatrix}
            e^{2\pi i\rho(A_E)} & 0 \\
            0 & e^{-2\pi i\rho(A_E)}
        \end{pmatrix}. 
    \end{equation}
    \begin{equation}\label{Bconj2}
        \widetilde{B}_1^{-1}(x+\alpha)S_E^V(x)\widetilde{B}_1(x)=
        \begin{pmatrix}
            e^{2\pi i\rho(\tilde{A}_E)} & 0 \\
            0 & e^{-2\pi i\rho(\tilde{A}_E)}
        \end{pmatrix}, 
    \end{equation}
    
    We first reduce to the case $\deg B_1=\deg \widetilde{B}_1$. Define
    \begin{equation*}
        Q(x)=M R_{\langle \deg \widetilde{B}_1-\deg B_1,x\rangle/2}M^{-1}=
        \begin{pmatrix}
            e^{\pi i\langle \deg \widetilde{B}_1-\deg B_1,x\rangle} & 0 \\
            0 & e^{-\pi i\langle \deg \widetilde{B}_1-\deg B_1,x\rangle}
        \end{pmatrix}, 
    \end{equation*}
    then 
    \begin{equation*}
        \deg Q=\deg \widetilde{B}_1-\deg B_1, \quad \deg(B_1Q)=\deg \widetilde{B}_1.
    \end{equation*}
    Moreover, because $Q$ is diagonal in the above coordinates,
    \begin{equation*}
        \begin{split}
            & (B_1Q)^{-1}(x+\alpha)S_E^V(x)(B_1Q)(x) =Q^{-1}(x+\alpha)
            \begin{pmatrix}
                e^{2\pi i\rho(A_E)} & 0 \\
                0 & e^{-2\pi i\rho(A_E)}
            \end{pmatrix}Q(x) \\
            & \qquad \qquad \quad =\begin{pmatrix}
                e^{2\pi i(\rho(A_E)-\langle \deg \widetilde{B}_1-\deg B_1,\alpha\rangle/2)} & 0 \\
                0 & e^{-2\pi i(\rho(A_E)-\langle \deg \widetilde{B}_1-\deg B_1,\alpha\rangle/2)}
            \end{pmatrix}.
        \end{split}
    \end{equation*}
    Hence $B_1Q$ is again an admissible diagonalizing conjugacy.

    We now check that replacing $B_1$ by $B_1Q$ does not change the vector $u_E$ defined in \eqref{uE}. Write
    \begin{equation*}
        B_1(x)=\begin{pmatrix}
                   b_{11}(x) & b_{12}(x) \\
                   b_{21}(x) & b_{22}(x)
               \end{pmatrix}, \quad
        B_1(x)Q(x)=\begin{pmatrix}
                    d_{11}(x) & d_{12}(x) \\
                    d_{21}(x) & d_{22}(x)
                   \end{pmatrix}, 
    \end{equation*}
    then $d_{11}(x)=b_{11}(x)e^{\pi i\langle \deg \widetilde{B}_1-\deg B_1,x\rangle}$. Therefore,
    \begin{equation*}
        e^{-\pi i\langle \deg \widetilde{B}_1,x\rangle}d_{11}(x)=e^{-\pi i\langle \deg \widetilde{B}_1,x\rangle}b_{11}(x)e^{\pi i\langle \deg \widetilde{B}_1-\deg B_1,x\rangle}=e^{-\pi i\langle \deg B_1,x\rangle}b_{11}(x).
    \end{equation*}
    Thus the function $z_{11}$ and the vector $u_E$ are unchanged. Consequently, without loss of generality, we may assume $\deg B_1=\deg\widetilde B_1$. 

    By \eqref{rot2}, \eqref{Bconj1}, and \eqref{Bconj2}, we have
    \begin{equation}\label{rhoequal}
        \rho(A_E)=\rho(E)-\frac{\langle \deg B_1,\alpha\rangle}{2}=\rho(\tilde{A}_E) \bmod \mathbb{Z}.
    \end{equation}
    Combining \eqref{Bconj1} and \eqref{Bconj2}, we obtain
    \begin{equation}\label{Tequation}
        T(x+\alpha)\begin{pmatrix}
            e^{2\pi i\rho(A_E)} & 0 \\
            0 & e^{-2\pi i\rho(A_E)}
        \end{pmatrix}
        =
        \begin{pmatrix}
            e^{2\pi i\rho(A_E)} & 0 \\
            0 & e^{-2\pi i\rho(A_E)}
        \end{pmatrix}T(x), 
    \end{equation}
    where $T(x)=\widetilde B_1^{-1}(x)B_1(x)=
        \begin{pmatrix}
            t_{11}(x) & t_{12}(x) \\
            t_{21}(x) & t_{22}(x)
        \end{pmatrix}$.
    Thus
    \begin{equation*}
        t_{11}(x+\alpha)=t_{11}(x), \quad t_{22}(x+\alpha)=t_{22}(x),
    \end{equation*}
    \begin{equation*}
        t_{12}(x+\alpha)=e^{4\pi i\rho(A_E)}t_{12}(x), \quad t_{21}(x+\alpha)=e^{-4\pi i\rho(A_E)}t_{21}(x).
    \end{equation*}
    
Since $B_{1}$ and $\widetilde{B}_{1}$ are admissible and $\deg B_{1}=\deg\widetilde{B}_{1}$, their quotient $T(x)=\widetilde{B}_{1}^{-1}(x)B_{1}(x)$ is $\mathbb{T}^{d}$-periodic. Indeed, for every $l \in \mathbb{Z}^{d}$, by \eqref{admissibleparity}, 
\begin{equation*}
	\begin{split}
		T(x+l) & =\widetilde{B}_{1}^{-1}(x+l)B_{1}(x+l) \\
		& =\big((-1)^{\langle\deg B_{1},l\rangle}
		\widetilde{B}_{1}(x)\big)^{-1}
		(-1)^{\langle\deg B_{1},l\rangle}B_{1}(x)=T(x).
	\end{split}
\end{equation*}
Thus $T \in C^{k}(\mathbb{T}^{d},\mathrm{SL}(2,\mathbb{C}))$ and its Fourier expansion contains only frequencies in $\mathbb{Z}^{d}$. Therefore, the following Fourier coefficient argument is well-defined.
    
    Taking Fourier coefficients gives
    \begin{equation*}
        \big(e^{2\pi i\langle n,\alpha\rangle}-1\big)\hat{t}_{11}(n)=0, \quad \big(e^{2\pi i\langle n,\alpha\rangle}-1\big)\hat{t}_{22}(n)=0,
    \end{equation*}
    \begin{equation*}
        \big(e^{2\pi i\langle n,\alpha\rangle}-e^{4\pi i\rho(A_E)}\big) \widehat{t}_{12}(n)=0, \quad \big(e^{2\pi i\langle n,\alpha\rangle}-e^{-4\pi i\rho(A_E)}\big)\hat{t}_{21}(n)=0.
    \end{equation*}
    Since $\alpha$ is rationally independent, $t_{11}(x)=\hat{t}_{11}(0)$ and $t_{22}(x)=\hat{t}_{22}(0)$. Moreover, since $2\rho(A_E) \neq \langle l,\alpha\rangle \bmod \mathbb{Z}$ for any $l \in \mathbb{Z}^d$, we have $t_{12}(x) \equiv 0$, $t_{21}(x) \equiv 0$.
    
    Therefore, we have $\hat{t}_{11}(0), \hat{t}_{22}(0) \neq 0$ and 
    \begin{equation*}
    T(x)=\widetilde B_1^{-1}(x)B_1(x)=
        \begin{pmatrix}
            \hat{t}_{11}(0) & 0 \\
            0 & \hat{t}_{22}(0)
        \end{pmatrix}. 
    \end{equation*}    
    Write $\widetilde{B}_1(x)=\begin{pmatrix}
        \tilde{b}_{11}(x) & \tilde{b}_{12}(x) \\
        \tilde{b}_{21}(x) & \tilde{b}_{22}(x)
    \end{pmatrix}$, then $b_{11}(x)=\hat{t}_{11}(0)\tilde{b}_{11}(x)$. By the definition \eqref{uE} and normalization, we can get $u_E=\frac{\hat{t}_{11}(0)}{|\hat{t}_{11}(0)|}\tilde{u}_E$. Thus $u_E$ is well-defined up to a unimodular constant.
\end{proof}
    
We now define the function $E(\cdot) \colon \mathbb{T} \to \Sigma_{V,\alpha}$ by
\begin{equation}\label{etheta}
    E(\theta) = 
    \begin{cases}
        \rho^{-1}(\theta), & \theta \in [0,1/2], \\
        \rho^{-1}(1-\theta), & \theta \in [1/2,1].
    \end{cases}
\end{equation}
At a gap label where $\rho^{-1}$ is not single-valued, we choose one of the two spectral endpoints once and for all. With this convention, $E(\theta)$ is well-defined and single-valued on $\mathbb{T}$.

For $\theta \in \Theta^{2\tau}$, we always choose its representative in $[0,1]$ and define
\begin{equation}\label{utheta}
    u_{\theta}=
    \begin{cases}
        u_{E(\theta)}^{+}, & \theta \in [0,1/2], \\
        u_{E(\theta)}^{-}, & \theta \in [1/2,1].
    \end{cases}
\end{equation}
Indeed, if $\theta \in [0,1/2]$, then $\rho(E(\theta))=\theta$ and this follows from the definition of $u_{E(\theta)}^{+}$. If $\theta \in [1/2,1]$, then $\rho(E(\theta))=1-\theta$ and $-\rho(E(\theta))=\theta \bmod \mathbb{Z}$, so the claim follows from Lemma~\ref{branchlemma}.

For any fixed $\theta \in \Theta^{2\tau}$, we define $T^m \theta \coloneq \theta-\langle m,\alpha\rangle \bmod \mathbb{Z}, \ E_m(\theta)\coloneq E(T^m\theta)$, and $u_{m,\theta} \coloneq T_{-m}u_{T^m\theta}$, where $T_m$ is the shift operator defined by $(T_m u)(n)=u(n-m)$. Then, 
\begin{equation}\label{umthetaeig}
    L_{V,\alpha,\theta}u_{m,\theta}=E_m(\theta)u_{m,\theta}, \quad \|u_{m,\theta}\|_{\ell^2}=1.
\end{equation}
Moreover, by Lemma~\ref{welldefine} and Lemma~\ref{branchlemma}, $u_{m,\theta}$ is well-defined up to a unimodular constant. We now introduce $\mathcal{R}$-measure.

\begin{definition}\label{RM}
    For any fixed $\theta \in \Theta^{2\tau}$ and $n \in \mathbb{Z}^d$, the $\mathcal{R}$-measure $\nu_{\theta, \delta_n} \colon \mathfrak{B} \to \mathbb{R}$ is defined by
    \begin{equation*}
        \nu_{\theta, \delta_n}(B)=\sum_{m \in \mathcal{M}_\theta^B} |u_{m,\theta}(n)|^2
    \end{equation*}
    for all $B \in \mathfrak{B}$, where $\mathfrak{B}$ denotes the Borel $\sigma$-algebra on $\mathbb{R}$ and $\mathcal{M}_\theta^B=\{m \in \mathbb{Z}^d \colon E_m(\theta) \in B\}$.
\end{definition}
	
Denote by $\mu^{\mathrm{pp}}_{\theta,\delta_n}$ the pure point component of the spectral measure $\mu_{\theta,\delta_n}$ associated with the long-range operator $L_{V,\alpha,\theta}$, which is defined via the resolvent equation:
\begin{equation*}
    \langle \delta_n, (L_{V,\alpha,\theta}-z)^{-1} \delta_n \rangle =\int_{\mathbb{R}} \frac{1}{E-\bar{z}} \, \mathrm{d}\mu_{\theta,\delta_n}(E), \quad z \in \mathbb{C} \setminus \mathbb{R}. 
\end{equation*}
The following lemma establishes a relationship between the $\mathcal{R}$-measure $\nu_{\theta,\delta_n}$ and the pure point spectral measure $\mu^{\mathrm{pp}}_{\theta,\delta_n}$.

\begin{Lemma}\label{dominate}
    For any $n \in \mathbb{Z}^d$ and any Borel set $B \in \mathfrak{B}$, the following hold:
    \begin{enumerate}[label=(\Alph*)]
        \item \label{acont}
        $\nu_{\theta, \delta_n}(B) \leqslant
        \mu^{\mathrm{pp}}_{\theta, \delta_n}(\mathcal{E}^{2\tau} \cap B)$ for every $\theta \in \Theta^{2\tau}$.
        \item \label{leb}
        One has $\nu_{\theta,\delta_n}(\mathcal{E}^{2\tau})=1$ for a.e. $\theta \in \Theta^{2\tau}$. 
    \end{enumerate}
\end{Lemma}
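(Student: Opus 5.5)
For part \ref{acont} we will show that the family $\{u_{m,\theta}\}_{m\in\mathbb{Z}^d}$ is orthonormal, or at least that eigenvectors with a common eigenvalue are handled correctly. Then the claim follows from the spectral theorem.

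First, if $E_m(\theta)\neq E_{m'}(\theta)$, then $u_{m,\theta}\perp u_{m',\theta}$, since both are eigenvectors of the self-adjoint operator $L_{V,\alpha,\theta}$ by \eqref{umthetaeig}. Next we show that $m\mapsto E_m(\theta)$ is injective for $\theta\in\Theta^{2\tau}$. Suppose $E(T^m\theta)=E(T^{m'}\theta)$. Since $\rho\circ E$ recovers $\theta$ up to the symmetry $\theta\mapsto 1-\theta$, this forces $T^m\theta=\pm T^{m'}\theta \bmod \mathbb{Z}$. The plus sign gives $\langle m-m',\alpha\rangle\in\mathbb{Z}$, hence $m=m'$. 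The minus sign gives $2\theta=\langle m+m',\alpha\rangle \bmod\mathbb{Z}$, which contradicts $\theta\in\Theta^{2\tau}$.

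At gap labels the value of $E$ was fixed by convention. Such labels are rational with respect to $\alpha$ and so are excluded, though this needs a brief check.

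With injectivity in hand, $\{u_{m,\theta}\}$ is an orthonormal family of eigenvectors with distinct eigenvalues $E_m(\theta)\in\mathcal{E}^{2\tau}$. Let $P_{\{E\}}$ denote the eigenprojection of $L_{V,\alpha,\theta}$. Then
\[
\mu^{\mathrm{pp}}_{\theta,\delta_n}(\{E\})=\|P_{\{E\}}\delta_n\|^2\ \geqslant\ |\langle u_{m,\theta},\delta_n\rangle|^2 \quad\text{for } E=E_m(\theta).
\]
Summing this over $m\in\mathcal{M}_\theta^B$ gives \ref{acont}.

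For part \ref{leb}, the bound $\nu\leqslant 1$ follows from \ref{acont}, so it suffices to show $\int_{\Theta^{2\tau}}\nu_{\theta,\delta_n}(\mathcal{E}^{2\tau})\,\mathrm{d}\theta=1$. Using $|\Theta^{2\tau}|=1$ and the invariance of Lebesgue measure under $T^m$, we compute
\[
\int_{\mathbb{T}}\sum_m |u_{T^m\theta}(n+m)|^2\,\mathrm{d}\theta=\sum_m\int_{\mathbb{T}}|u_{\theta}(n+m)|^2\,\mathrm{d}\theta=\int_{\mathbb{T}}\|u_\theta\|_{\ell^2}^2\,\mathrm{d}\theta=1.
\]
Here we used Tonelli's theorem and the fact that $n+m$ ranges over all of $\mathbb{Z}^d$. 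Measurability of $\theta\mapsto|u_\theta(n)|^2$ is needed for this step; by Lemma~\ref{welldefine} these quantities are well defined, and measurability should follow from the construction. Combined with the pointwise bound $\nu\leqslant1$, this gives $\nu_{\theta,\delta_n}(\mathcal{E}^{2\tau})=1$ for a.e. $\theta$.

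The main obstacle is measurability of $\theta\mapsto|u_\theta(n)|^2$, given the non-canonical choice of conjugacies. To handle it, I would express $|u_E(n)|^2$ intrinsically. The uniqueness proved in Lemma~\ref{welldefine} shows that $u_E$ spans the eigenspace of $L_{V,\alpha,\rho(E)}$ at $E$ obtained by duality. One can then try to write $|u_E(n)|^2=\lim_{\epsilon\to0}\epsilon\,\Im\langle\delta_n,(L_{V,\alpha,\rho(E)}-E-i\epsilon)^{-1}\delta_n\rangle$, which is measurable in $E$ once that eigenspace is shown to be one-dimensional. Composing with the monotone map $\theta\mapsto E(\theta)$ then gives measurability in $\theta$.

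If one-dimensionality of the eigenspace is not available, the fallback is to invoke a measurable-selection argument for the conjugacies $B_1$, as in \cite{ge2020arithmetic}.
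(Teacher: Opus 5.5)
Your proposal is correct and follows the paper's proof. Part (A) is the same spectral-theorem bound summed over the distinct atoms $E_m(\theta)$, and you additionally justify their distinctness, which the paper only asserts; part (B) is the same Tonelli/translation-invariance computation against the pointwise bound $\nu_{\theta,\delta_n}(\mathcal{E}^{2\tau})\leqslant 1$, the paper differing only in first integrating over $\Theta^{2\tau}_\gamma$ and then letting $\gamma\to0$ by monotone convergence.

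The paper applies Fubini without addressing measurability. If you want to close that point, your resolvent formula already suffices without knowing simplicity in advance: run the same computation with the Borel function $w_\theta(l)=\mu_{\theta,\delta_l}(\{E(\theta)\})\geqslant|u_\theta(l)|^2$. For it, $\sum_l w_\theta(l)=\dim\ker(L_{V,\alpha,\theta}-E(\theta))\geqslant1$, while the covariance $L_{V,\alpha,\theta}=T_{-m}L_{V,\alpha,T^m\theta}T_m$ gives $\sum_m w_{T^m\theta}(n+m)=\sum_m\mu_{\theta,\delta_n}(\{E_m(\theta)\})\leqslant1$. Integrating in $\theta$ then forces these eigenspaces to be one-dimensional along the whole orbit of a.e.\ $\theta$, whence $w=|u|^2$ there and $\nu_{\theta,\delta_n}(\mathcal{E}^{2\tau})=1$ a.e.
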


Lemma~\ref{dominate} was originally proved for analytic potentials, see \cite[Lemma 3.2]{ge2020arithmetic}. Because the definition of the $\mathcal{R}$-measure $\nu_{\theta,\delta_n}$ in this paper differs from the one used in \cite{ge2020arithmetic}, we provide a complete proof here.

\begin{proof}
    For $\theta \in \Theta^{2\tau}$, the energies $\{E_m(\theta)\}_{m \in \mathbb Z^d}$ are mutually distinct. We first prove \ref{acont}. For any $\theta \in \Theta^{2\tau}$ and $m \in \mathbb{Z}^d$, let $P_m(\theta)$ be the spectral projection of
    $L_{V,\alpha,\theta}$ onto the eigenspace corresponding to $E_m(\theta)$. By \eqref{umthetaeig}, $u_{m,\theta}$ is a normalized eigenfunction of $L_{V,\alpha,\theta}$ with eigenvalue $E_m(\theta)$. By the Spectral Theorem, we have
\begin{equation*}
    \begin{split}
        \mu_{\theta,\delta_n}^{\mathrm{pp}}(\mathcal{E}^{2\tau} \cap B) \geqslant \sum_{m \in \mathcal{M}_\theta^B} \langle P_m(\theta)\delta_n, \delta_n \rangle \geqslant \sum_{m \in \mathcal{M}_\theta^B}|u_{m,\theta}(n)|^2=\nu_{\theta,\delta_n}(B).
    \end{split}
\end{equation*}

    Now we prove \ref{leb}. Recall that
    $|\mathbb{T} \setminus \Theta^{2\tau}|=0$. We extend the definition of $\nu_{\theta,\delta_n}$ on $\Theta^{2\tau}$ to $\mathbb{T}$ by setting
    \begin{equation*}
        \tilde{\nu}_{\theta}(B)=
        \begin{cases}
            \nu_{\theta,\delta_n}(B), & \theta \in \Theta^{2\tau}, \\
            0, & \theta \notin \Theta^{2\tau}.
        \end{cases}
    \end{equation*}
    For any $\gamma\in(0,1)$, applying Fubini's theorem gives
    \begin{equation}\label{finitegammaint}
        \begin{split}
            \int_{\mathbb{T}} \tilde{\nu}_\theta(\mathcal{E}_\gamma^{2\tau}) \, \mathrm{d}\theta
            & =\int_{\mathbb{T}}\sum_{m \colon T^m\theta \in \Theta^{2\tau}_\gamma}|u_{m,\theta}(n)|^2 \, \mathrm{d}\theta \\
            & =\sum_{m \in \mathbb{Z}^d}
            \int_{\Theta^{2\tau}_\gamma}|T_{-m}u_{T^m\theta}(n)|^2 \, \mathrm{d}T^m\theta \\
            & =\int_{\Theta^{2\tau}_\gamma}\sum_{m \in \mathbb{Z}^d}|u_{\theta}(n+m)|^2 \, \mathrm{d}\theta=|\Theta^{2\tau}_\gamma|.
        \end{split}
    \end{equation}
    Let $\gamma \rightarrow 0$, the monotone convergence theorem and \eqref{finitegammaint} imply
    \begin{equation}\label{fullmassint}
        \int_{\mathbb{T}}\tilde{\nu}_\theta(\mathcal{E}^{2\tau}) \, \mathrm{d}\theta=|\Theta^{2\tau}|=1.
    \end{equation}
    On the other hand, by \ref{acont},
    \begin{equation*}
        0 \leqslant \nu_{\theta,\delta_n}(\mathcal{E}^{2\tau})=\tilde{\nu}_\theta(\mathcal E^{2\tau}) \leqslant \mu^{\mathrm{pp}}_{\theta,\delta_n}(\mathcal{E}^{2\tau}) \leqslant \mu_{\theta,\delta_n}(\mathbb{R})=1
    \end{equation*}
    for any $\theta \in \Theta^{2\tau}$, while
    $\tilde{\nu}_\theta(\mathcal{E}^{2\tau})=0$ for
    $\theta \notin \Theta^{2\tau}$. Combining this with \eqref{fullmassint}, we obtain $\tilde{\nu}_\theta(\mathcal{E}^{2\tau})=1$ for a.e. $\theta \in \mathbb{T}$. Since $|\mathbb{T} \setminus \Theta^{2\tau}|=0$, this gives
    \begin{equation*}
        \nu_{\theta,\delta_n}(\mathcal{E}^{2\tau})=1
    \end{equation*}
    for a.e. $\theta \in \Theta^{2\tau}$.
\end{proof}

\subsection{Arithmetic spectral localization}\label{aspl}

In the following, we will show the arithmetic spectral localization, i.e. $\mu_{\theta,\delta_{n}}^{\mathrm{pp}}(\mathbb{R})=1$ for any $\theta \in \Theta_{\gamma}^{\tau}$ and $n \in \mathbb{Z}^{d}$.

\begin{Proposition}\label{APP}
	Let $\alpha \in \mathrm{DC}_d(\kappa,\tau)$ and $k>d$. Suppose that, for every $\gamma \in (0,1)$ and every $E \in \Sigma_{V,\alpha}$ satisfying $\rho(E) \in \Theta_{\gamma}^{2\tau}$, the cocycle $(\alpha,S_E^V)$ is $C^k$-reducible, and that \eqref{red}--\eqref{est} and \eqref{fulluniformcond} hold with the same $\gamma$ and with $\tau$ replaced by $2\tau$ in \eqref{est}. Then, $L_{V,\alpha,\theta}$ has pure point spectrum for every $\theta \in \Theta^\tau$. 
\end{Proposition}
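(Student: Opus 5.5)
The plan is to prove that, for every fixed $\theta \in \Theta^\tau$ and every $n \in \mathbb{Z}^d$, the shifted eigenfunctions satisfy $\sum_{m}|\langle w_m,\delta_n\rangle|^2=1$ for some orthonormal family $\{w_m\}_{m\in\mathbb{Z}^d}$ of eigenfunctions of $L_{V,\alpha,\theta}$. This implies $\delta_n \in \overline{\mathrm{span}}\{w_m\}$ for every $n$, hence pure point spectrum. Lemma~\ref{dominate}\ref{leb} already gives the statement for a.e.\ $\theta$, with $w_m=u_{m,\theta}$. The task is therefore to upgrade "a.e." to "every $\theta\in\Theta^\tau$" by an approximation argument. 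The approximation needs decay estimates on $u_{m,\theta'}$ that are uniform for $\theta'$ near $\theta$ and summable in $m$.

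\textbf{Step 1 (arithmetic of shifts).} Fix $\theta\in\Theta^\tau_\gamma$, and let $\theta'$ lie in $\Theta^\tau_{\gamma/2}$. Since $2T^m\theta'-\langle l,\alpha\rangle = 2\theta'-\langle 2m+l,\alpha\rangle$, we get
\[
\|2T^m\theta'-\langle l,\alpha\rangle\|_{\mathbb{R}/\mathbb{Z}}\geqslant \frac{\gamma/2}{(1+2|m|+|l|)^{\tau}}\geqslant \frac{c_\tau\gamma(1+|m|)^{-\tau}}{(1+|l|)^{2\tau}}.
\]
Hence $T^m\theta'\in\Theta^{2\tau}_{\gamma_m}$ with $\gamma_m=c_\tau\gamma(1+|m|)^{-\tau}$. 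The same bound holds for the reflected phase $1-T^m\theta'$, so both branches in \eqref{utheta} are covered via Lemma~\ref{branchlemma}. Applying \eqref{decayuE} (with $\tau$ replaced by $2\tau$) and setting $a=c_3+c_4k+c_1k$ gives
\[
|u_{m,\theta'}(n)|\leqslant C\gamma^{-a}(1+|m|)^{\tau a}(1+|n-m|)^{-k}.
\]
This bound is uniform in $\theta'\in\Theta^\tau_{\gamma/2}$. Consequently
\[
|u_{m,\theta'}(n)|^2\lesssim \gamma^{-2a}(1+|n|)^{2\tau a}(1+|m|)^{2\tau a}(1+|n-m|)^{-2k}\cdot(1+|n-m|)^{2\tau a}\text{-type factors},
\]
and in any case
\[
|u_{m,\theta'}(n)|^2\lesssim_{n,\gamma}(1+|m|)^{2\tau a-2k}.
\]
By \eqref{fulluniformcond}, $2\tau a<k/2$, so the exponent $2k-2\tau a$ exceeds $k>d$. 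The right-hand side is therefore summable in $m$, and the $\ell^2$-tails of each $u_{m,\theta'}$ are uniformly small.

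\textbf{Step 2 (approximation).} The set $\Theta^\tau_{\gamma/2}\cap(\theta-\varepsilon,\theta+\varepsilon)$ has positive measure for every $\varepsilon>0$. Near $\theta$, only finitely many constraints in \eqref{dio2} can fail at level $\gamma/2$, and each such constraint removes a set of small measure. We can therefore choose $\theta_j\to\theta$ with $\theta_j\in\Theta^\tau_{\gamma/2}$, each $\theta_j$ lying in the full-measure set of Lemma~\ref{dominate}\ref{leb}. For these phases $\sum_m|u_{m,\theta_j}(n)|^2=1$ for all $n$.

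The uniform pointwise decay of Step~1 makes $\{u_{m,\theta_j}\}_j$ precompact in $\ell^2$ for each fixed $m$. The eigenvalues $E_m(\theta_j)$ lie in the compact set $\Sigma_{V,\alpha}$. A diagonal argument yields a subsequence along which $u_{m,\theta_j}\to w_m$ in $\ell^2$ and $E_m(\theta_j)\to E_m'$ for every $m$. Since $\theta\mapsto L_{V,\alpha,\theta}$ is norm-continuous, we obtain $L_{V,\alpha,\theta}w_m=E'_mw_m$. The family $\{w_m\}$ is orthonormal, because the inner products $\langle u_{m,\theta_j},u_{m',\theta_j}\rangle=\delta_{mm'}$ pass to the limit; this uses the distinctness of the $E_m(\theta_j)$, and only norm convergence is needed for the limit.

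\textbf{Step 3 (conclusion).} By the summable majorant in $m$ from Step~1, dominated convergence gives
\[
\sum_m|w_m(n)|^2=\lim_j\sum_m|u_{m,\theta_j}(n)|^2=1.
\]
Since $\|\delta_n\|=1$ and $\{w_m\}$ is an orthonormal set of eigenvectors, equality in Bessel's inequality puts $\delta_n$ in their closed span. Hence $\mu^{\mathrm{pp}}_{\theta,\delta_n}(\mathbb{R})=1$ for all $n$, and $L_{V,\alpha,\theta}$ has pure point spectrum.

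The main obstacle is Step~1: the decay constant in \eqref{decayuE} degenerates like $\gamma_m^{-a}\sim(1+|m|)^{\tau a}$ as the localization center $m$ moves. Summability in $m$, uniformly in $\theta_j$, is exactly where \eqref{fulluniformcond} must be used, and where the passage from $\Theta^\tau$ to $\Theta^{2\tau}$ enters. I would check the exponent bookkeeping there carefully, splitting into the regions $|m|\leqslant 2|n|$ and $|m|>2|n|$.
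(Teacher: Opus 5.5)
Your compactness strategy works, but Step~2 has a genuine gap. You give no valid argument that $\Theta^{\tau}_{\gamma/2}\cap(\theta-\varepsilon,\theta+\varepsilon)$ has positive measure, and the reasoning you offer is backwards. For $|\theta'-\theta|<\varepsilon$ one only has $\|2\theta'-\langle l,\alpha\rangle\|_{\mathbb{R}/\mathbb{Z}}\geqslant\gamma(1+|l|)^{-\tau}-2\varepsilon$. So the constraints that survive automatically are the \emph{finitely many} with $(1+|l|)^{\tau}\leqslant\gamma/(4\varepsilon)$. The infinitely many constraints with larger $|l|$ can all fail, and the natural bound on the measure they remove is
\[
\sum_{(1+|l|)^{\tau}>\gamma/(4\varepsilon)}\frac{\gamma}{(1+|l|)^{\tau}}\approx\gamma^{d/\tau}\varepsilon^{1-d/\tau},
\]
which is much larger than $2\varepsilon$ as $\varepsilon\to0$. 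This is exactly why the paper proves Lemma~\ref{hom} with the larger exponent $\tau+d$, which the remark after it notes is sharp for this covering. It then uses $\tau+d<2\tau$ to take approximants $\theta_j\in\Theta^{2\tau}_{\bar\gamma}$ with $\bar\gamma<\gamma/c(d,\tau)$. A uniform Diophantine class for the $\theta_j$ is not optional in your argument. You need it for the precompactness of $\{u_{m,\theta_j}\}_j$, and for the summable majorant in Step~3, since Fatou alone only gives $\sum_m|w_m(n)|^2\leqslant1$.

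The repair changes Step~1. With $\theta_j\in\Theta^{2\tau}_{\bar\gamma}$ you get $T^m\theta_j\in\Theta^{2\tau}_{\gamma_m}$ with $\gamma_m=(2|m|+1)^{-2\tau}\bar\gamma$. Hence $|u_{m,\theta_j}(n)|\lesssim_{\bar\gamma}(1+|m|)^{2\tau a}(1+|n+m|)^{-k}$, and the squared majorant is $\lesssim_{n,\bar\gamma}(1+|m|)^{4\tau a-2k}$. This is summable precisely because \eqref{fulluniformcond} gives $4\tau a<k$, together with $k>d$. Your version only needed the weaker condition $2\tau a<2k-d$, which is a symptom of the missing $2\tau$.

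With this fix the rest of your proof is correct, and it takes a genuinely different route from the paper. You pass to $\ell^2$-limits $w_m$ of the eigenvectors, keep orthonormality in the limit, and conclude by Bessel equality. The paper instead keeps the specific eigenfunctions $u_{m,\theta}$ at the limit phase and shows $\mathcal{T}_M\nu_{\theta_j,\delta_n}\to\mathcal{T}_M\nu_{\theta,\delta_n}$ via Lemma~\ref{continuous}. That lemma uses compactness of the conjugacies, pigeonholing of degrees, and the uniqueness Lemma~\ref{welldefine}; the paper then closes with part (A) of Lemma~\ref{dominate}.

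Your route is softer and bypasses Appendix~A entirely. The paper's route directly gives completeness of $\{u_{m,\theta}\}$, which Theorems~\ref{APSL} and~\ref{thm5.2} rely on. You can recover that as well:
\begin{itemize}
\item $E'_m=E_m(\theta)$, by continuity of $E(\cdot)$ at phases that are not gap labels;
\item the $E_m(\theta)$ are distinct for $\theta\in\Theta^{\tau}$, so every eigenspace is one-dimensional;
\item hence $w_m$ agrees with $u_{m,\theta}$ up to a unimodular factor.
\end{itemize}
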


\begin{proof}
By Theorem~\ref{goodeig}\ref{item:res1}, for any $E \in \Sigma_{V,\alpha}$ with $\rho(E) \in \Theta^{2\tau}$, we can construct a normalized eigenfunction $u_E$ of $L_{V,\alpha,\rho(E)}$ corresponding to $E$, with polynomial decay estimate \eqref{decayuE}. 

Fix $\theta \in \Theta^{\tau}$ and $n \in \mathbb{Z}^d$. Define
\begin{equation}\label{22}
    \begin{split}
        & \mathcal{T}_M \nu_{\theta, \delta_n}(B) = \sum_{|m| \leqslant M, m \in \mathcal{M}_\theta^B} |u_{m,\theta}(n)|^2, \\
        & \mathcal{R}_M \nu_{\theta, \delta_n}(B) = \sum_{|m|>M, m \in \mathcal{M}_\theta^B}|u_{m,\theta}(n)|^2.
    \end{split}
\end{equation}

The framework of our proof closely follows that of \cite{ge2020arithmetic,ge2021arithmetic}. More precisely, we utilize the homogeneity of $\Theta^{\tau}_{\gamma}$ (Lemma~\ref{hom}) to construct the approximation sequence, the uniformity (Lemma~\ref{uniform}) to control $\mathcal{R}_M \nu_{\theta, \delta_n}$, and the continuity (Lemma~\ref{continuous}) to control $\mathcal{T}_M \nu_{\theta, \delta_n}$. We first verify that all these conditions hold in the $C^{k}$-setting, and subsequently apply them to complete the proof.

\medskip

\noindent\textbf{Verification of Homogeneity.}

\begin{definition}
    For any $\theta \in \mathbb{R}$ and $B \subseteq \mathbb{R}$, we say that $\theta$ is $B$-homogeneous if $|(\theta-\sigma, \theta+\sigma) \cap B| \geqslant \sigma$ for every $0<\sigma \leqslant 1/2$.
\end{definition}

It is clear that $\theta$ is $B'$-homogeneous whenever it is $B$-homogeneous for some $B' \supseteq B$. The homogeneity of $\Theta^{\tau}_{\gamma}$ is independent of the operators and is adapted from \cite[Lemma 4.4]{ge2021arithmetic}. 

\begin{Lemma}\label{hom}
    Let
    \begin{equation*}
        S_{d,\tau}=\sum_{l \in \mathbb{Z}^d}\frac{1}{(1+|l|)^{\tau+d}}, \quad T_{d,\tau}=\sup_{R \geqslant 1}R^\tau\sum_{1+|l| \geqslant R}\frac{1}{(1+|l|)^{\tau+d}}.
    \end{equation*}
    Set $c(d,\tau)=16\max\{1,S_{d,\tau},T_{d,\tau}\}$, then every $\theta \in \Theta^\tau_\gamma$ is $\Theta^{\tau+d}_{\gamma/c(d,\tau)}$-homogeneous. 
\end{Lemma}

\begin{proof}
    We show that
    \begin{equation}\label{badgoal}
        |(\theta-\sigma,\theta+\sigma) \cap (\mathbb{T} \setminus \Theta^{\tau+d}_{\gamma/c(d,\tau)})| \leqslant \sigma.
    \end{equation}
    Since $|(\theta-\sigma,\theta+\sigma)|=2\sigma$ for $0<\sigma \leqslant 1/2$, \eqref{badgoal} implies the desired homogeneity.

    Let $\Theta_l=\big\{\theta \in \mathbb{T} \colon \|2\theta-\langle l,\alpha\rangle\|_{\mathbb{R}/\mathbb{Z}}<\frac{\gamma}{c(d,\tau)(1+|l|)^{\tau+d}}\big\}$, then $\mathbb{T} \setminus \Theta^{\tau+d}_{\gamma/c(d,\tau)}=\cup_{l \in \mathbb{Z}^d}\Theta_l$, and 
    \begin{equation}\label{badlength}
        |\Theta_l| \leqslant \frac{2\gamma}{c(d,\tau)(1+|l|)^{\tau+d}}.
    \end{equation}

    We first consider the case $0<\sigma<\gamma/4$. Suppose that $\Theta_l \cap (\theta-\sigma,\theta+\sigma) \neq \emptyset$. Then there exists $\theta' \in (\theta-\sigma,\theta+\sigma)$ such that
    \begin{equation*}
        \|2\theta'-\langle l,\alpha\rangle\|_{\mathbb{R}/\mathbb{Z}}<\frac{\gamma}{c(d,\tau)(1+|l|)^{\tau+d}}.
    \end{equation*}
    Since $\theta \in \Theta^\tau_\gamma$, we have $\frac{\gamma}{(1+|l|)^\tau} \leqslant \|2\theta-\langle l,\alpha\rangle\|_{\mathbb{R}/\mathbb{Z}}$. Hence
    \begin{equation*}
        \frac{\gamma}{(1+|l|)^\tau} \leqslant 2\sigma+\frac{\gamma}{c(d,\tau)(1+|l|)^{\tau+d}} \leqslant 2\sigma+\frac{\gamma}{c(d,\tau)(1+|l|)^{\tau}}.
    \end{equation*}
    It follows that $\frac{\gamma}{2(1+|l|)^\tau} \leqslant 2\sigma$. Thus
    \begin{equation}\label{klarge}
        1+|l| \geqslant \bigg(\frac{\gamma}{4\sigma}\bigg)^{1/\tau}.
    \end{equation}
    Combining \eqref{badlength} and \eqref{klarge}, we obtain 
    \begin{equation*}
        \begin{split}
            |(\theta-\sigma,\theta+\sigma) \cap (\mathbb{T} \setminus \Theta^{\tau+d}_{\gamma/c(d,\tau)})|
            & \leqslant \sum_{1+|l| \geqslant (\gamma/(4\sigma))^{1/\tau}}\frac{2\gamma}{c(d,\tau)(1+|l|)^{\tau+d}} \\ 
            & \leqslant \frac{2\gamma}{c(d,\tau)}\frac{4\sigma}{\gamma}T_{d,\tau}=\frac{8T_{d,\tau}}{c(d,\tau)}\sigma \leqslant \sigma. 
        \end{split}
    \end{equation*}

    It remains to consider $\sigma \geqslant \gamma/4$. In this case, by \eqref{badlength},
    \begin{equation*}
        \begin{split}
            |(\theta-\sigma,\theta+\sigma) \cap (\mathbb{T} \setminus \Theta^{\tau+d}_{\gamma/c(d,\tau)})|
            & \leqslant |\mathbb{T} \setminus \Theta^{\tau+d}_{\gamma/c(d,\tau)}| \\
            & \leqslant \sum_{l \in \mathbb{Z}^d}\frac{2\gamma}{c(d,\tau)(1+|l|)^{\tau+d}}=\frac{2\gamma}{c(d,\tau)}S_{d,\tau} \leqslant \frac{\gamma}{8} \leqslant \sigma.
        \end{split}
    \end{equation*}
    Therefore \eqref{badgoal} holds for all $0<\sigma \leqslant 1/2$. This proves the lemma. 
\end{proof}
\begin{remark}
    The exponent $\tau+d$ is the critical exponent produced by the $\mathbb{Z}^d$ counting. Indeed, the bad intervals have total tail
    \begin{equation*}
        \sum_{1+|l| \geqslant (\gamma/(4\sigma))^{1/\tau}}(1+|l|)^{-s} \lesssim \bigg(\frac{\gamma}{4\sigma}\bigg)^\frac{d-s}{\tau}.
    \end{equation*}
    Since $0<\sigma<\gamma/4$, the bad contribution is bounded by $O(\sigma)$ exactly when $s \geqslant \tau+d$. Thus $\tau+d$ is the sharp threshold for this homogeneous covering argument.
\end{remark}

\medskip

\noindent\textbf{Verification of the Uniformity.}
\begin{Lemma}\label{uniform}
	Assume that \eqref{fulluniformcond} holds. For any $\varepsilon>0$ and $\bar{\gamma} \in (0,\gamma/c(d,\tau))$, there exists $M_{0}=M_0(k,d,\tau,\bar{\gamma},n,\varepsilon)$ such that if $M>M_0$, then 
	\begin{equation*}
		\mathcal{R}_M\nu_{\theta,\delta_n}(\mathcal{E}^{2\tau})<\varepsilon, \quad \forall \theta \in \Theta^{2\tau}_{\bar{\gamma}}. 
	\end{equation*}
\end{Lemma}

\begin{proof}
    Since $\theta \in \Theta^{2\tau}_{\bar\gamma}$, for any $m,l \in \mathbb{Z}^d$, one has
    \begin{equation*}
        \begin{split}
            \|2T^m\theta-\langle l,\alpha\rangle\|_{\mathbb{R}/\mathbb{Z}}
            & =\|2\theta-\langle 2m+l,\alpha\rangle\|_{\mathbb{R}/\mathbb{Z}} \\
            & \geqslant \frac{\bar\gamma}{(|2m+l|+1)^{2\tau}} \geqslant \frac{(2|m|+1)^{-2\tau}\bar\gamma}{(|l|+1)^{2\tau}}, 
        \end{split}
    \end{equation*}
    which implies that $T^m\theta \in \Theta^{2\tau}_{\gamma_m}$, where $\gamma_m=(2|m|+1)^{-2\tau}\bar\gamma$. By Theorem~\ref{goodeig}\ref{item:res1} and Lemma~\ref{branchlemma}, for any $T^m\theta \in \Theta^{2\tau}_{\gamma_m}$, the normalized eigenfunction $u_{T^m\theta}$ satisfies
    \begin{equation*}
        |u_{T^m\theta}(l)| \leqslant C_2(k,d) \gamma_m^{-(c_{3}+c_{4}k+c_{1}k)}(1+|l|)^{-k}. 
    \end{equation*}
    Therefore, for fixed $n \in \mathbb{Z}^d$, 
    \begin{equation*}
        \begin{split}
            |u_{m,\theta}(n)| &=|T_{-m} u_{T^m\theta}(n)|=|u_{T^m\theta}(n+m)| \\
            & \leqslant C_2(k,d,\tau,\bar\gamma)(1+|m|)^{2\tau(c_{3}+c_{4}k+c_{1}k)}(1+|n+m|)^{-k} \\
            & \leqslant C_3(k,d,\tau,\bar{\gamma}, n)(1+|m|)^{-k+2\tau(c_{3}+c_{4}k+c_{1}k)}.
        \end{split}
    \end{equation*}
    Choosing $M_0=\bigg(\frac{C_3^2}{(k-d)\varepsilon}\bigg)^{\frac{1}{k-d}}$, we deduce from \eqref{fulluniformcond}, the definition of $\mathcal{R}_M \nu_{\theta,\delta_n}(\mathcal{E}^{2\tau})$, and the assumption $k > d$ that for any $M>M_0$,
    \begin{equation*}
        \mathcal{R}_M \nu_{\theta,\delta_n}(\mathcal{E}^{2\tau})=\sum_{|m|>M}|u_{m,\theta}(n)|^2 
        \leqslant C_3^2\sum_{j > M} j^{-(k-d+1)}<\varepsilon.
    \end{equation*}
\end{proof}

\medskip

\noindent\textbf{Verification of Continuity.} 
	
\begin{Lemma}[{\cite[Theorem 4.4]{ge2020arithmetic}, \cite[Lemma 4.3]{ge2021arithmetic}}]\label{continuous}
    For any $M>0$, $\varepsilon>0$, and $\bar{\gamma} \in (0,\gamma/c(d,\tau))$, there exists $\delta=\delta(\alpha,V,d,k, \bar{\gamma}, n, \varepsilon, M) > 0$ such that
    \begin{equation*}
        |\mathcal{T}_M\nu_{\theta,\delta_n}(\mathcal{E}^{2\tau})-\mathcal{T}_M\nu_{\theta',\delta_n}(\mathcal{E}^{2\tau})|<\varepsilon
    \end{equation*}
    for all $\theta, \theta' \in \Theta^{2\tau}_{\bar{\gamma}}$ with $\|\theta-\theta' \|_{\mathbb{R}/\mathbb{Z}}<\delta$. 
\end{Lemma}

The proof of Lemma~\ref{continuous} relies on quantitative estimates for the conjugation of the reducible cocycle $(\alpha, S_E^V)$ in the $C^k$-topology. Given the $C^k$-reducibility established in Section~\ref{Reducibility}, we provide a complete proof of Lemma~\ref{continuous} in Appendix~\ref{appendixA}.

We now complete the proof of Proposition~\ref{APP}. 
Fix any $\theta \in \Theta^{\tau}_\gamma$ and $\bar{\gamma} \in (0,\gamma/c(d,\tau))$. Let $G_n=\{\theta \in \Theta^{2\tau} \colon \nu_{\theta,\delta_n}(\mathcal{E}^{2\tau})=1\}$. By Lemma~\ref{dominate}\ref{leb}, $G_n$ has full Lebesgue measure in $\mathbb{T}$. Since $\Theta^{\tau+d}_{\gamma/c(d,\tau)} \subseteq \Theta^{2\tau}_{\bar{\gamma}}$, Lemma~\ref{hom} implies that $\theta$ is $\Theta^{2\tau}_{\bar{\gamma}}$-homogeneous. Then every neighborhood of $\theta$ intersects $G_n \cap \Theta_{\bar\gamma}^{2\tau}$. We can choose a sequence $\{\theta_j\}_{j=1}^\infty \subseteq G_n \cap \Theta^{2\tau}_{\bar{\gamma}}$ such that
\begin{equation}\label{lim}
    \theta_j \xrightarrow{j \to \infty} \theta \qquad \text{and} \qquad \nu_{\theta_j,\delta_n}(\mathcal{E}^{2\tau})=1.
\end{equation}
By Lemma~\ref{uniform}, for $\varepsilon=\bar{\gamma}$, there exists $M_0=M_0(k,d,\tau,\bar{\gamma},n,\varepsilon)>0$ such that, for any $M>M_0$,
\begin{equation}\label{unif}
    \mathcal{R}_M\nu_{\theta_j,\delta_n}(\mathcal{E}^{2\tau})<\bar{\gamma}.
\end{equation}
By Lemma~\ref{continuous}, for such an $M$, we have
\begin{equation}\label{limj}
    \lim_{j \to \infty} \mathcal{T}_M\nu_{\theta_j,\delta_n}(\mathcal{E}^{2\tau})=\mathcal{T}_M\nu_{\theta,\delta_n}(\mathcal{E}^{2\tau}).
\end{equation}
Therefore, by \eqref{lim}, \eqref{unif}, and \eqref{limj},
\begin{equation*}
    \begin{split}
        \nu_{\theta,\delta_n}(\mathcal{E}^{2\tau}) 
        & \geqslant \mathcal{T}_M\nu_{\theta,\delta_n}(\mathcal{E}^{2\tau})=\lim_{j \to \infty} \mathcal{T}_M\nu_{\theta_j,\delta_n}(\mathcal{E}^{2\tau}) \\
        & =\lim_{j \to \infty} \big( \nu_{\theta_j,\delta_n}(\mathcal{E}^{2\tau})-\mathcal{R}_M\nu_{\theta_j,\delta_n}(\mathcal{E}^{2\tau})\big) \geqslant 1-\bar{\gamma}.
    \end{split}
\end{equation*}
Let $\bar{\gamma} \to 0$, we obtain $\nu_{\theta,\delta_n}(\mathcal{E}^{2\tau}) \geqslant 1$. On the other hand, by Lemma~\ref{dominate}\ref{acont},
\begin{equation*}
    1 \leqslant \nu_{\theta,\delta_n}(\mathcal{E}^{2\tau}) \leqslant \mu^{\mathrm{pp}}_{\theta,\delta_n}(\mathcal{E}^{2\tau}) \leqslant \mu^{\mathrm{pp}}_{\theta,\delta_n}(\mathbb{R}) \leqslant \mu_{\theta,\delta_n}(\mathbb{R}) \leqslant 1,
\end{equation*}
which implies that $\mu^{\mathrm{pp}}_{\theta,\delta_n}(\mathbb{R})=1$ for any $\theta \in \Theta^{\tau}_\gamma$ and $n \in \mathbb{Z}^d$. Since $\Theta^{\tau}=\bigcup_{\gamma \in (0,1)} \Theta^{\tau}_\gamma$, Proposition~\ref{APP} follows. 
\end{proof}

\medskip

\begin{proof}[Proof of Theorem~\ref{APSL}]
    $L_{V,\alpha,\theta}$ has pure point spectrum for every $\theta \in \Theta^\tau$. Moreover, the proof of Proposition~\ref{APP} gives
    \begin{equation*}
        \nu_{\theta,\delta_n}(\mathcal{E}^{2\tau})=\sum_{m \in \mathbb{Z}^d}|u_{m,\theta}(n)|^2=1, \quad n \in \mathbb{Z}^d.
    \end{equation*}
    Since the energies $E_m(\theta)$ are mutually distinct for $\theta \in \Theta^\tau$, the corresponding normalized eigenfunctions are mutually orthogonal. Hence the family $\{u_{m,\theta}\}_{m \in \mathbb{Z}^d}$ is complete in $\ell^2(\mathbb{Z}^d)$. Let $\theta \in \Theta^\tau_\gamma$. For each fixed $m \in \mathbb{Z}^d$, one has $T^m\theta \in \Theta^\tau_{\gamma_m}, \ \gamma_m=(2|m|+1)^{-\tau}\gamma$. By Theorem~\ref{goodeig}\ref{item:res1} and Lemma~\ref{branchlemma},
    \begin{equation*}
        \begin{split}
            |u_{m,\theta}(n)| &=|T_{-m}u_{T^m\theta}(n)|=|u_{T^m\theta}(n+m)| \\ 
            & \leqslant C_1(k,d)\gamma_m^{-(c_{3}+c_{4}k+c_{1}k)} (1+|n+m|)^{-k} \\
            & \leqslant C_{4}(k,d,\tau,\gamma,m)(1+|n|)^{-k}.  
        \end{split}
    \end{equation*}
    This proves that long-range operator $L_{V,\alpha,\theta}$ exhibits arithmetic $k$-PSL for every $\theta \in \Theta^\tau$. 
\end{proof}

\section{Arithmetic polynomial dynamical localization}\label{spdl}

In this section, we prove Theorem~\ref{main1}\ref{item:pdl}. More precisely, we establish the following result:

\begin{theorem}\label{pdl}
    Let $\alpha \in \mathrm{DC}_d(\kappa, \tau)$ and $k>d$. Suppose that, for every $\gamma \in (0,1)$ and every $E \in \Sigma_{V,\alpha}$ satisfying $\rho(E) \in \Theta_{\gamma}^{2\tau}$, the Schr\"odinger cocycle $(\alpha, S_E^V)$ is $C^k$-reducible and \eqref{red}--\eqref{est} hold with the same $\gamma$ and with $\tau$ replaced by $2\tau$ in \eqref{est}. Assume moreover that \eqref{cond} holds with $\tau$ replaced by $2\tau$ and that \eqref{fulluniformcond} holds. Then the long-range operator family $\{L_{V, \alpha, \theta}\}_{\theta \in \mathbb{T}}$ has arithmetic $k$-PDL. 
\end{theorem}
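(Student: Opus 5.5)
We would argue via the eigenfunction expansion supplied by Theorem~\ref{APSL}. For every $\theta \in \Theta^\tau$ the family $\{u_{m,\theta}\}_{m\in\mathbb{Z}^d}$ is an orthonormal basis of eigenfunctions of $L_{V,\alpha,\theta}$, with eigenvalues $E_m(\theta)$. Since $|\mathbb{T}\setminus\Theta^\tau|=0$, the integral over $\mathbb{T}$ equals the integral over $\Theta^\tau$. For such $\theta$ we expand
\begin{equation*}
    \langle\delta_n, e^{-itL_{V,\alpha,\theta}}\delta_{n'}\rangle=\sum_{m\in\mathbb{Z}^d}e^{-itE_m(\theta)}u_{m,\theta}(n)\overline{u_{m,\theta}(n')},
\end{equation*}
and bound it uniformly in $t$ by
\begin{equation*}
    \sup_t|\langle\delta_n, e^{-itL_{V,\alpha,\theta}}\delta_{n'}\rangle|\leqslant\sum_{m}|u_{T^m\theta}(n+m)||u_{T^m\theta}(n'+m)|.
\end{equation*}
Next we integrate in $\theta$, apply Fubini, and change variables $\theta\mapsto T^m\theta$, which preserves Lebesgue measure and maps $\Theta^{2\tau}$ into itself, exactly as in \eqref{finitegammaint}. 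This reduces the task to the single-eigenfunction quantity
\begin{equation*}
    \int_{\Theta^{2\tau}}\sum_{m\in\mathbb{Z}^d}|u_\theta(n+m)||u_\theta(n'+m)|\,\mathrm{d}\theta .
\end{equation*}
Here $u_\theta$ is the good eigenfunction from \eqref{utheta}: it comes from Theorem~\ref{goodeig} on $[0,1/2]$ and from Lemma~\ref{branchlemma} on $[1/2,1]$. The problem is now translation invariant, and we need to show that
\begin{equation*}
    \sum_j|u_\theta(j)||u_\theta(j+n'-n)|
\end{equation*}
is controlled by $(1+|n-n'|)^{-k}$ after integration.

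To handle the $\theta$-dependence of the constants, we stratify $\Theta^{2\tau}=\bigcup_{i\geqslant1}\Theta_i$ with $\Theta_i=\Theta^{2\tau}_{\gamma_i}\setminus\Theta^{2\tau}_{\gamma_{i-1}}$ and $\gamma_i=2^{-i}$. Directly from the definition of the excluded intervals, $|\Theta_i|\leqslant|\mathbb{T}\setminus\Theta^{2\tau}_{\gamma_{i-1}}|\lesssim_{d,\tau}\gamma_{i}$, because $\sum_l(1+|l|)^{-2\tau}<\infty$ when $2\tau>d$. For $\theta\in\Theta_i$, Theorem~\ref{goodeig}\ref{item:res2} (with $2\tau$ in place of $\tau$) makes $u_\theta$ a $(k,\pm n^*,C,C_{|n^*|})$-good function. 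This gives
\begin{equation*}
    |u_\theta(j)|\leqslant C\bigl(w(j)+C_{|n^*|}\,w(j+n^*)\bigr),\qquad w(j)\coloneq(1+|j|)^{-k},
\end{equation*}
with $C^2(1+C_{|n^*|}(1+|n^*|)^k)\lesssim\gamma_i^{-c}$ and $c<1$ by \eqref{cond} and \eqref{A2}.

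The key step is the two-center convolution estimate. Set $N=n'-n$. Expanding the product gives four terms, each of the form $\sum_j w(j+a)w(j+b+N)$ with $a,b\in\{0,n^*\}$, and we use the standard bound (valid for $k>d$)
\begin{equation*}
    \sum_j(1+|j|)^{-k}(1+|j+x|)^{-k}\lesssim_{k,d}(1+|x|)^{-k}.
\end{equation*}
The diagonal terms give $C^2(1+C_{|n^*|}^2)(1+|N|)^{-k}$. The cross terms give $C^2C_{|n^*|}(1+|N\pm n^*|)^{-k}$, and Peetre's inequality $(1+|N|)^k\leqslant(1+|N\pm n^*|)^k(1+|n^*|)^k$ turns each of them into $C^2C_{|n^*|}(1+|n^*|)^k(1+|N|)^{-k}$. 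Altogether,
\begin{equation*}
    \sum_j|u_\theta(j)||u_\theta(j+N)|\lesssim C^2\bigl(1+C_{|n^*|}(1+|n^*|)^k\bigr)(1+|N|)^{-k}\lesssim\gamma_i^{-c}(1+|N|)^{-k}.
\end{equation*}
This is precisely the role of \eqref{A2}: it absorbs the shift of the localization center. Summing over the strata then gives
\begin{equation*}
    \sum_i|\Theta_i|\,\gamma_i^{-c}\lesssim\sum_i\gamma_i^{1-c}<\infty,
\end{equation*}
which proves \eqref{ineq1} with exponent $s=k$.

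The main obstacles are two points of bookkeeping. First, the change of variables must be justified: the map $\theta\mapsto u_\theta$ has to be measurable, and $|u_\theta(j)|$ is well defined by Lemma~\ref{welldefine}, which is enough for measurability of the integrand. Second, the $\sup_t$ must be taken inside the sum before integrating, which is justified by absolute convergence; that convergence follows from \eqref{decayuE} together with \eqref{fulluniformcond}, as in Lemma~\ref{uniform}. Pure point spectrum on a full-measure arithmetic set $\Theta^\tau$ is already provided by Theorem~\ref{APSL}, so the result qualifies as \emph{arithmetic} $k$-PDL.
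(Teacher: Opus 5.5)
Your proposal is correct and follows essentially the same route as the paper: eigenfunction expansion from Theorem~\ref{APSL}, the change of variables $\theta\mapsto T^m\theta$ to reduce to a single $u_\theta$, stratification by the Diophantine constant, the four-term convolution bound via Lemma~\ref{convolution} with the shifted center absorbed through \eqref{A2}, and summation of $\gamma_i^{1-c}$. Your version is slightly leaner, since you sum over all strata instead of cutting at $i_0$ and bounding $\mathbb{T}\setminus\Phi_{i_0}$ trivially, and you skip the sub-stratification by $|n^*|$; two small corrections: measurability of $\theta\mapsto|u_\theta(j)|$ comes from the continuity argument of Appendix~\ref{appendixA}, not from the well-definedness in Lemma~\ref{welldefine} alone, and moving $\sup_t$ inside the sum only needs $\sum_m|u_{m,\theta}(n)|^2=1$ together with Cauchy--Schwarz, not \eqref{decayuE} and \eqref{fulluniformcond}.
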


By Proposition \ref{APP}, for every $\theta \in \Theta^{\tau}$, the operator $L_{V,\alpha,\theta}$ has pure point spectrum and the shifted eigenfunctions form an orthonormal basis. It remains to prove the following estimate.  

\begin{theorem}\label{thm5.2}
    Let $\alpha \in \mathrm{DC}_d(\kappa, \tau)$ and $k>d$. Suppose that the assumptions of Theorem~\ref{pdl} hold. Then for any $p,q \in \mathbb{Z}^{d}$, 
    \begin{equation}\label{ineq}
        \int_{\Theta^{\tau}} \sup_{t \in \mathbb{R}} |\langle \delta_p, e^{-itL_{V,\alpha,\theta}}\delta_q \rangle| \, \mathrm{d}\mu(\theta) \leqslant \frac{C}{(1+|p-q|)^{k}} 
    \end{equation}
    where $C=C(k,d,\tau)>0$ and $\mu$ denotes the Haar measure on $\mathbb{T}$.
\end{theorem}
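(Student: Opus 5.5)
We argue through the eigenfunction expansion supplied by Proposition~\ref{APP} and the proof of Theorem~\ref{APSL}. For every $\theta\in\Theta^{\tau}$ the family $\{u_{m,\theta}\}_{m\in\mathbb{Z}^d}$ is an orthonormal eigenbasis. Hence
\begin{equation*}
\sup_{t}|\langle\delta_p,e^{-itL_{V,\alpha,\theta}}\delta_q\rangle|\leqslant\sum_{m\in\mathbb{Z}^d}|u_{m,\theta}(p)||u_{m,\theta}(q)|=\sum_{m}|u_{T^m\theta}(p+m)||u_{T^m\theta}(q+m)|.
\end{equation*}
We integrate over $\theta$ and use Tonelli. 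Since $\theta\mapsto T^m\theta$ preserves Haar measure and maps $\Theta^{\tau}$ into a full-measure set, each term becomes $\int|u_\theta(p+m)||u_\theta(q+m)|\,\mathrm{d}\theta$, and summing over $m$ gives
\begin{equation*}
\int_{\Theta^{\tau}}\sum_{l\in\mathbb{Z}^d}|u_\theta(l+p-q)||u_\theta(l)|\,\mathrm{d}\theta .
\end{equation*}
This translation-invariance trick is the same one used in \eqref{finitegammaint}. It removes the growth in $m$ that appeared in Lemma~\ref{uniform}, so only the single eigenfunction $u_\theta$ needs to be controlled.

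Next we stratify the phases. Set $\gamma_i=2^{-i}$ and $\Theta_i=\Theta^{2\tau}_{\gamma_i}\setminus\Theta^{2\tau}_{\gamma_{i-1}}$, so that $\Theta^{2\tau}=\bigcup_i\Theta_i$ up to measure zero. Then $|\Theta_i|\leqslant|\mathbb{T}\setminus\Theta^{2\tau}_{\gamma_{i-1}}|\lesssim_{d,\tau}\gamma_i$, because the bad set is a union of intervals of length $\lesssim\gamma(1+|l|)^{-2\tau}$ and $2\tau>d$ makes the sum over $l$ converge. For $\theta\in\Theta_i$, Theorem~\ref{goodeig}\ref{item:res2} (with $2\tau$ in place of $\tau$) together with Lemma~\ref{branchlemma} shows that $u_\theta$ is $(k,\pm n^*,C,C_{|n^*|})$-good, and \eqref{A2} gives $C^2(1+C_{|n^*|}(1+|n^*|)^k)\lesssim\gamma_i^{-c}$ with $c<1$.

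The main step is the two-center convolution estimate. Write $N=p-q$ and assume $|u(l)|\leqslant C(a(l)+C_*a(l+n^*))$ with $a(l)=(1+|l|)^{-k}$ and $k>d$. Expanding $\sum_l|u(l+N)||u(l)|$ gives four terms. Each is a convolution of the form $\sum_l a(l+x)a(l+y)\lesssim_{k,d}(1+|x-y|)^{-k}$, valid since $k>d$. The resulting bound is
\begin{equation*}
C^2\Big[(1+|N|)^{-k}+C_*(1+|N+n^*|)^{-k}+C_*(1+|N-n^*|)^{-k}+C_*^2(1+|N|)^{-k}\Big].
\end{equation*}
For the cross terms we use $(1+|N|)\leqslant(1+|N\pm n^*|)(1+|n^*|)$, so $C_*(1+|N\pm n^*|)^{-k}\leqslant C_*(1+|n^*|)^k(1+|N|)^{-k}$. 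Since $C_*\leqslant1$, the whole sum is at most $C^2(1+C_*(1+|n^*|)^k)(1+|N|)^{-k}\lesssim\gamma_i^{-c}(1+|N|)^{-k}$ by \eqref{A2}. This is exactly where the balancing in \eqref{A2} is needed: the growth of the second localization center $n^*$ is paid for by the smallness of $C_*$.

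Summing over strata gives
\begin{equation*}
\int_{\Theta^\tau}\cdots\,\mathrm{d}\theta\lesssim\sum_i|\Theta_i|\,\gamma_i^{-c}(1+|N|)^{-k}\lesssim\sum_i\gamma_i^{1-c}(1+|N|)^{-k},
\end{equation*}
and the series converges because $c<1$, which yields \eqref{ineq}. The delicate points are measurability of $\theta\mapsto u_\theta(l)$ and the fact that the constants $C$ and $C_*$ depend on $E(\theta)$ only through $\gamma_i$, both needed to apply the bound uniformly on $\Theta_i$. The first can be handled because $|u_\theta(l)|$ is well defined (Lemma~\ref{welldefine}) and arises as a limit of measurable quantities, which one can take from the continuity argument of Lemma~\ref{continuous}.
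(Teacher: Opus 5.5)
Your proposal is correct and follows essentially the same route as the paper: the eigenfunction expansion in the orthonormal basis $\{u_{m,\theta}\}$, the Haar-invariance change of variables that reduces everything to the single eigenfunction $u_\theta$, stratification of the phases by $\gamma_i$, the two-center convolution bound (Lemma~\ref{convolution}) combined with \eqref{A2}, and summation of $\sum_i\gamma_i^{1-c}$. The differences are cosmetic: you drop the paper's cutoff $i_0$ with its trivial Cauchy--Schwarz bound on $\mathbb{T}\setminus\Phi_{i_0}$, and its sub-stratification $\Phi_i^h$ by $|n^*|=h$. Neither is needed, since \eqref{A2} is uniform in $n^*$ and the series over strata converges.
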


\begin{remark}
    The domain of integration in \eqref{ineq} is $\Theta^{\tau}$ instead of $\mathbb{T}$ because $\Theta^{\tau}$ is a full-measure subset of $\mathbb{T}$. 
\end{remark}
	
\begin{proof}
Fix $\theta \in \Theta^{\tau}$. By Theorem~\ref{APSL}, $\{u_{m,\theta}\}_{m\in\mathbb{Z}^{d}}$ forms an orthonormal basis of $\ell^{2}(\mathbb{Z}^{d})$ and $L_{V,\alpha,\theta}u_{m,\theta}=E_m(\theta)u_{m,\theta}$, the spectral theorem gives
\begin{equation*}
    \begin{split}
        \langle \delta_p, e^{-itL_{V,\alpha,\theta}}\delta_q \rangle=\langle \delta_p, e^{-itL_{V,\alpha,\theta}} \sum_{m \in \mathbb{Z}^d} \langle \delta_q, u_{m,\theta} \rangle u_{m,\theta} \rangle 
        = \sum_{m \in \mathbb{Z}^d} e^{itE_m(\theta)} \overline{u_{m,\theta}(p)} u_{m,\theta}(q),
    \end{split}
\end{equation*}
which in turn yields
\begin{equation*}
    \int_{\Theta^{\tau}} \sup_{t \in \mathbb{R}}|\langle \delta_p, e^{-itL_{V,\alpha,\theta}}\delta_q \rangle| \, \mathrm{d}\mu(\theta) 
    \leqslant \int_{\Theta^{\tau}} \sum_{m \in \mathbb{Z}^d} |\overline{u_{m,\theta}(p)} u_{m,\theta}(q)| \, \mathrm{d}\mu(\theta). 
\end{equation*}

For any $i \in \mathbb{N}$, we inductively define 
\begin{equation}\label{i,t}
    \begin{split}
        \gamma_i &= 10^{-i}, \quad \Phi_i=\Theta^{\tau}_{\gamma_i}, \quad h_i=\lfloor \gamma_i^{-c_1} \rfloor, \quad C_i =C_1(k,d)\gamma_i^{-(c_{3}+c_{4}k)},\\
        C_{i,h} &= \min \bigg\{1, \frac{100^{2\tau}}{\gamma_{i}} (1+h)^{-\min(c_{2}, c_{5}-2\tau)}\bigg\}, \\
        \Phi^0_i &= \big\{\theta \in \Phi_i \colon u_{\theta} \text{ is } (k,0,C_i,C_{i,0})\text{-good}\big\}, \\ 
        \Phi^h_i &= \bigg(\Phi_i {\setminus} \bigcup_{j=0}^{h-1} \Phi^j_i\bigg) \bigcap \big\{\theta \colon u_{\theta} \text{ is } (k,n^*,C_i,C_{i,h}) \text{-good with } |n^*|=h\big\}.
    \end{split}
\end{equation}
\begin{Claim}\label{unit} 
    For any $i \in \mathbb{N}$, $\{\Phi^j_i\}_{j=0}^{h_i}$ are mutually disjoint and $\Phi_i=\bigcup_{h=0}^{h_i} \Phi^h_i$. 
\end{Claim}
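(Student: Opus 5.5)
Disjointness is immediate from the recursive definition in \eqref{i,t}. The substance of the claim is the covering statement: every $\theta\in\Phi_i$ must lie in some $\Phi^h_i$ with $0\leqslant h\leqslant h_i$.

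\textbf{Disjointness.} For $h\geqslant 1$, the set $\Phi^h_i$ is contained in $\Phi_i\setminus\bigcup_{j<h}\Phi^j_i$. So if $j<h$, then $\Phi^h_i\cap\Phi^j_i=\emptyset$. Every $\Phi^h_i$ is contained in $\Phi_i$ by construction, so $\bigcup_{h=0}^{h_i}\Phi^h_i\subseteq\Phi_i$.

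\textbf{Covering.} Fix $\theta\in\Phi_i=\Theta^{\tau}_{\gamma_i}$ and take a representative in $[0,1]$.

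1. Since $\Theta^{\tau}_{\gamma_i}\subseteq\Theta^{2\tau}_{\gamma_i}$, the energy $E=E(\theta)$ satisfies $\rho(E)\in\Theta^{2\tau}_{\gamma_i}$. Here $\rho(E)=\theta$ or $1-\theta$, and $\Theta^{2\tau}_\gamma$ is symmetric under $\phi\mapsto-\phi$.

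2. The hypotheses of Theorem~\ref{pdl} give $C^k$-reducibility with \eqref{red}--\eqref{est}, using $\gamma=\gamma_i$ and $2\tau$ in place of $\tau$. This produces a resonant site $n^*$ with $|n^*|\leqslant\gamma_i^{-c_1}$.

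3. Since $|n^*|$ is an integer, $|n^*|\leqslant\lfloor\gamma_i^{-c_1}\rfloor=h_i$.

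4. Theorem~\ref{goodeig}\ref{item:res2}, applied with $2\tau$ in place of $\tau$, shows that $u^+_E$ is $(k,n^*,C,C_{|n^*|})$-good. The constants satisfy $C\leqslant C_1(k,d)\gamma_i^{-(c_3+c_4k)}=C_i$ and $C_{|n^*|}\leqslant C_{i,|n^*|}$; these are exactly the expressions in \eqref{CC} with $\tau\to2\tau$. Goodness is monotone in both constants, so $u^+_E$ is $(k,n^*,C_i,C_{i,|n^*|})$-good.

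5. If $\theta\in[1/2,1]$, Lemma~\ref{branchlemma} shows that $u_\theta=u^-_E$ is $(k,-n^*,C_i,C_{i,|n^*|})$-good, and $|-n^*|=|n^*|$. So in both cases $u_\theta$ is good with a site of modulus $h:=|n^*|\leqslant h_i$.

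6. If $h=0$, then $\theta\in\Phi^0_i$ directly. If $h\geqslant1$, either $\theta$ already lies in some $\Phi^j_i$ with $j<h$, or it lies in $\Phi_i\setminus\bigcup_{j<h}\Phi^j_i$ and meets the goodness condition defining $\Phi^h_i$. In either case $\theta\in\bigcup_{h=0}^{h_i}\Phi^h_i$.

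\textbf{Main obstacle.} Goodness depends on the choice of the eigenvector $u_\theta$. The definitions in \eqref{i,t} are unambiguous because of Lemma~\ref{welldefine}: $u_\theta$ is determined up to a unimodular constant, and goodness only involves $|u_\theta(n)|$. The remaining care is in matching the constants in \eqref{CC} with $\tau$ replaced by $2\tau$ (the factor $100^{2\tau}$ and the exponent $c_5-2\tau$) to $C_{i,h}$, and in handling the trivial case $n^*=0$, where $C_{|n^*|}\leqslant1=C_{i,0}$.
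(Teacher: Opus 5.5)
Your proof is correct and follows the paper's argument. Disjointness and $\bigcup_{h}\Phi^h_i\subseteq\Phi_i$ come from the construction, and the covering comes from Theorem~\ref{goodeig}\ref{item:res2} (with $2\tau$ in place of $\tau$) and Lemma~\ref{branchlemma}, which give a good $u_\theta$ with $h=|n^*|\leqslant h_i$, followed by the same case split on $h$. The details you add (the inclusion $\Theta^{\tau}_{\gamma_i}\subseteq\Theta^{2\tau}_{\gamma_i}$, the symmetry under $\phi\mapsto-\phi$, monotonicity of goodness in the constants, and well-definedness of $|u_\theta|$ via Lemma~\ref{welldefine}) are all sound and only make explicit what the paper leaves implicit.
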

\begin{proof}
It is clear that the sets $\{\Phi^j_i\}_{j=0}^{h_i}$ are mutually disjoint and $\bigcup_{j=0}^{h_i}\Phi^j_i \subseteq \Phi_i$ by construction. On the other hand, by Theorem~\ref{goodeig}\ref{item:res2} and Lemma~\ref{branchlemma}, for any $\theta \in \Phi_i$, $L_{V,\alpha,\theta}$ has a $(k, n^*, C_i, C_{i,|n^*|})$-good eigenfunction $u_{\theta}$, where $|n^*| \leqslant \lfloor\gamma_i^{-c_1}\rfloor=h_i$. Letting $h=|n^*|$, it follows that $u_{\theta}$ is $(k, n^*, C_i, C_{i,h})$-good. If $h=0$, then $\theta \in \Phi^0_i$. Suppose that $h \geqslant 1$. If $\theta \in \bigcup_{j=0}^{h-1} \Phi^j_i$, then $\theta \in \bigcup_{j=0}^{h_i}\Phi^j_i$ already holds. Otherwise, the definition of $\Phi^h_i$ gives $\theta \in \Phi^h_i$. Consequently, $\Phi_i \subseteq \bigcup_{j=0}^{h_i}\Phi^j_i$. 
\end{proof}

Recall that $u_{m,\theta}=T_{-m}u_{T^m\theta}$, one has $u_{m,T^{-m}\theta}=T_{-m}u_{\theta}$. Hence 
\begin{equation*}
    |u_{m,T^{-m}\theta}(n)|=|T_{-m}u_{\theta}(n)|=|u_{\theta}(n+m)|. 
\end{equation*}

Let $i_0$ be the smallest integer such that $\gamma_{i_0} \leqslant \frac{1}{(1+|p-q|)^{k}}$. By the invariance of the Haar measure and by replacing $m$ with $-m$ in the sum, this yields 
\begin{equation*}
    \begin{split}
        \int_{\Theta^{\tau}} \sum_{m \in \mathbb{Z}^d} & |\overline{u_{m,\theta}(p)} u_{m,\theta}(q)| \, \mathrm{d}\mu(\theta) \\
        & =\int_{\Theta^{\tau}} \sum_{m \in \mathbb{Z}^d} |\overline{u_{\theta}(p-m)} u_{\theta}(q-m)| \, \mathrm{d}\mu(\theta) \\ 
        & =\bigg(\int_{\mathbb{T} \setminus \Phi_{i_0}}+\sum_{i=1}^{i_0} \int_{\bigcup_{h=0}^{h_i} \Phi^h_i {\setminus} \Phi_{i-1}} \bigg) \sum_{m \in \mathbb{Z}^d} |\overline{u_{\theta}(p-m)} u_{\theta}(q-m)| \, \mathrm{d}\mu(\theta) \\
        & \eqcolon (\mathrm{I})+(\mathrm{II}).
    \end{split}
\end{equation*}
By the Cauchy-Schwarz inequality and the choice of $i_{0}$, we have 
\begin{equation}\label{I}
    (\mathrm{I}) \leqslant \mu(\mathbb{T} \setminus \Phi_{i_0}) \lesssim_{\tau}\gamma_{i_{0}} \lesssim_{\tau}\frac{1}{(1+|p-q|)^{k}}. 
\end{equation}

To estimate $(\mathrm{II})$, we apply Tonelli's theorem and Claim~\ref{unit} to deduce that
\begin{equation*}
    \begin{split}
        (\mathrm{II}) & = \, \sum_{i=1}^{i_0} \sum_{m \in \mathbb{Z}^d} \int_{\bigcup_{h=0}^{h_i} \Phi^h_i {\setminus} \Phi_{i-1}} |\overline{u_{\theta}(p-m)} u_{\theta}(q-m)| \, \mathrm{d}\mu(\theta) \\
        & \leqslant \, \sum_{i=1}^{i_0}\sum_{h=0}^{h_{i}} \sum_{m \in \mathbb{Z}^{d}} \int_{\Phi^{h}_{i} {\setminus} \Phi_{i-1}} |\overline{u_{\theta}(p-m)}u_{\theta}(q-m)| \, \mathrm{d}\mu(\theta). 
    \end{split}
\end{equation*}
By \eqref{i,t}, for any $\theta \in \Phi^h_i \setminus \Phi_{i-1}$, there exists $n^*=n^*(\theta) \in \mathbb{Z}^d$ with $|n^*(\theta)|=h$ such that 
    \begin{equation*}
        |u_{\theta}(n)| \leqslant C_i\big((1+|n|)^{-k}+C_{i,h}(1+|n+n^*|)^{-k}\big), \quad \forall n \in \mathbb{Z}^d.
    \end{equation*}
    Hence
    \begin{equation*}
        |\overline{u_{\theta}(p-m)}u_{\theta}(q-m)| \leqslant C_i^2\big((\mathrm{III})_m+(\mathrm{IV})_m+(\mathrm{V})_m+(\mathrm{VI})_m\big),
    \end{equation*}
    where
    \begin{equation*}
        \begin{split}
            (\mathrm{III})_m &=(1+|p-m|)^{-k}(1+|q-m|)^{-k}, \\
            (\mathrm{IV})_m &=C_{i,h}^2(1+|p-m+n^*|)^{-k}(1+|q-m+n^*|)^{-k}, \\
            (\mathrm{V})_m &=C_{i,h}(1+|p-m|)^{-k}(1+|q-m+n^*|)^{-k}, \\
            (\mathrm{VI})_m &=C_{i,h}(1+|q-m|)^{-k}(1+|p-m+n^*|)^{-k}.
        \end{split}
    \end{equation*}
Thus, 
\begin{equation}\label{siu}
    \begin{split}
        \sum_{m \in \mathbb{Z}^d} \int_{\Phi^h_i \setminus \Phi_{i-1}} &|\overline{u_{\theta}(p-m)} \, u_{\theta}(q-m)| \, \mathrm{d}\mu(\theta) \\
        &\leqslant C_i^2 \int_{\Phi^h_i \setminus \Phi_{i-1}} \sum_{m \in \mathbb{Z}^d} \big( (\mathrm{III})_m+(\mathrm{IV})_m+(\mathrm{V})_m+(\mathrm{VI})_m \big) \, \mathrm{d}\mu(\theta).
    \end{split}
\end{equation}

We need the following discrete convolution inequality.
\begin{Lemma}\label{convolution}
    Let $s_1, s_2 > 0$ be constants such that $s_1+s_2>d$. There exists a constant $C>0$, depending only on $s_1, s_2$, and $d$, such that for any $a \in \mathbb{Z}^d$, 
    \begin{equation*}
        \sum_{n \in \mathbb{Z}^d} \frac{1}{(1+|a-n|)^{s_1}(1+|n|)^{s_2}} \leqslant C (1+|a|)^{-s} \Gamma(|a|),
    \end{equation*}
    where $s=\min \{s_{1},s_{2},s_{1}+s_{2}-d\}$ and 
    \begin{equation*}
        \Gamma(|a|)=\begin{cases}
            \ln (2+|a|), & \text{if } \max\{s_{1},s_{2}\}=d, \\
            1, & \text{otherwise}.
        \end{cases}
    \end{equation*}
\end{Lemma}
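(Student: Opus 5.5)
We prove Lemma~\ref{convolution} by splitting the lattice into regions according to which of the two centers $n$ is closer to. Fix $a \in \mathbb{Z}^d$ and write the sum as $S(a)=\sum_n (1+|a-n|)^{-s_1}(1+|n|)^{-s_2}$. If $|a|\leqslant 1$, one of the exponents must be handled by the other, and since $s_1+s_2>d$ the sum converges uniformly. The estimate is then trivial, because $(1+|a|)^{-s}\Gamma(|a|)$ is bounded below. So we assume $|a|\geqslant 2$ and partition $\mathbb{Z}^d$ into three regions:
\begin{align*}
R_1&=\{n:|n|\leqslant |a|/2\},\\
R_2&=\{n:|a-n|\leqslant |a|/2\},\\
R_3&=\mathbb{Z}^d\setminus(R_1\cup R_2).
\end{align*}
By the triangle inequality, on $R_1$ we have $|a-n|\geqslant |a|/2$, on $R_2$ we have $|n|\geqslant |a|/2$, and on $R_3$ both $|n|,|a-n|\geqslant |a|/2$ and $|n|\leqslant |a-n|+|a|\leqslant 3|a-n|$.

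On $R_1$ we pull out $(1+|a-n|)^{-s_1}\lesssim (1+|a|)^{-s_1}$ and bound the remaining factor using the lattice-point count $\#\{|n|=r\}\lesssim_d r^{d-1}$:
\begin{equation*}
\sum_{|n|\leqslant |a|/2}(1+|n|)^{-s_2}\lesssim
\begin{cases}
1, & s_2>d,\\
\ln(2+|a|), & s_2=d,\\
(1+|a|)^{d-s_2}, & s_2<d.
\end{cases}
\end{equation*}
The contribution of $R_1$ is therefore $(1+|a|)^{-s_1}$, $(1+|a|)^{-s_1}\ln(2+|a|)$, or $(1+|a|)^{-(s_1+s_2-d)}$ respectively. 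In each case this is bounded by $(1+|a|)^{-s}\Gamma(|a|)$. Region $R_2$ is symmetric under $n\mapsto a-n$ with $s_1,s_2$ swapped, so it gives the same bound with the roles exchanged.

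On $R_3$ we use $(1+|n|)\lesssim(1+|a-n|)$ together with $1+|a-n|\gtrsim 1+|a|$ to bound the summand by $(1+|a-n|)^{-(s_1+s_2)}$. Summing over $|a-n|\geqslant |a|/2$ gives a tail of order $(1+|a|)^{d-s_1-s_2}$, which is finite precisely because $s_1+s_2>d$. This is bounded by $(1+|a|)^{-s}$.

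No step is a real obstacle. The only care needed is bookkeeping the logarithm, which arises only in the borderline case $s_j=d$ on region $R_1$ or $R_2$. The statement's $\Gamma$ with $\max\{s_1,s_2\}=d$ covers this. If instead $\min\{s_1,s_2\}=d$ while the maximum exceeds $d$, the logarithm is absorbed, since then $s=d<s_1+s_2-d$ and we have a polynomial margin. To cover all cases uniformly, we simply note that any logarithm $\ln(2+|a|)(1+|a|)^{-s_j}$ arising with $s_j>s$ is bounded by $C(1+|a|)^{-s}$. The constant $C$ depends only on $s_1,s_2,d$ through the lattice counting constants.
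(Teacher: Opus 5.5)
Your proposal is correct and follows essentially the same route as the paper. You use the same three regions (near $0$, near $a$, and the rest), and the logarithm again arises only from the borderline sum $\sum_{|n|\leqslant |a|/2}(1+|n|)^{-d}$; the only cosmetic difference is that you treat the far region with a single tail sum via $|n|\sim|a-n|$, whereas the paper splits it at $|n|=2|a|$.

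One small slip needs fixing. To bound the summand on $R_3$ by $(1+|a-n|)^{-(s_1+s_2)}$ you need $|a-n|\leqslant |a|+|n|<3|n|$, which is the reverse of the inequality you wrote. It holds on $R_3$ by the same triangle-inequality argument, so the conclusion stands.
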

\begin{proof}
For convenience, denote $R=|a|$. When $R \leqslant 2$, the inequality holds trivially by choosing $C$ large enough, so we assume $R>2$.  

We decompose $\mathbb{Z}^d$ into three disjoint regions: $\Omega_1=\{n \in \mathbb{Z}^d \colon |n| \leqslant R/2\}$, $\Omega_2=\{n \in \mathbb{Z}^d \colon |a-n| \leqslant R/2\} \setminus \Omega_1$, and $\Omega_3=\mathbb{Z}^d \setminus (\Omega_1 \cup \Omega_2)$. Denote
\begin{equation*}
    \sum_{n \in A}(\cdots) \coloneq \sum_{n \in A} \frac{1}{(1+|a-n|)^{s_1}(1+|n|)^{s_2}}, \quad A \subseteq \mathbb{Z}^{d}.
\end{equation*}

For $n \in \Omega_1$, since $|n| \leqslant R/2$, by the triangle inequality we have $|a-n| \geqslant |a|-|n| \geqslant R-R/2=R/2$.
Therefore,
\begin{equation*}
    \sum_{n\in \Omega_{1}}(\cdots) \leqslant \frac{1}{(1+R/2)^{s_1}} \sum_{|n| \leqslant R/2} \frac{1}{(1+|n|)^{s_2}}.
\end{equation*}
Approximating the sum with $\int_1^{R/2} r^{d-1-s_2} \, \mathrm{d}r$, we have
\begin{equation*}
    \sum_{|n| \leqslant R/2} \frac{1}{(1+|n|)^{s_2}} \lesssim_{s_{2},d}
    \begin{cases}
        1, & \text{if } s_2 > d, \\
        \ln R, & \text{if } s_2 = d, \\
        R^{d-s_{2}}, & \text{if } s_{2}<d,
    \end{cases}
\end{equation*}
Thus
\begin{equation*}
    \sum_{n \in \Omega_{1}}(\cdots) \lesssim_{s_{1},s_{2},d} 
    \begin{cases}
        R^{-s_{1}}, & \text{if } s_2 > d,\\
        R^{-s_{1}}\ln R, & \text{if } s_2 = d,\\
        R^{d-s_{2}-s_{1}}, & \text{if } s_{2}<d.
    \end{cases}
\end{equation*}

By symmetry (substituting $m=a-n$), we swap $s_1$ and $s_2$. We have
\begin{equation*}
    \sum_{n \in \Omega_{2}}(\cdots) \lesssim_{s_{1},s_{2},d} \begin{cases}
        R^{-s_{2}}, & \text{if } s_1 > d, \\
        R^{-s_{2}}\ln R, & \text{if } s_1 = d, \\
        R^{d-s_{2}-s_{1}}, & \text{if } s_{1}<d.
    \end{cases}
\end{equation*}

For $n \in \Omega_{3}$, we further split this into two parts. For $R/2<|n| \leqslant 2R$, the sum is estimated as follows,
\begin{equation*}
    \sum_{ \substack{n\in \Omega_{3}, \, R/2<|n|\leqslant 2R}} (\cdots) \lesssim_{s_{1},s_{2},d} \frac{R^{d}}{(1+R/2)^{s_{1}} (1+R/2)^{s_{2}}} \lesssim_{s_{1},s_{2},d} R^{d-s_{1}-s_{2}}.
\end{equation*}
For $|n|>2R$, we have $|a-n| \geqslant |n|-|a|=|n|-R>|n|/2$, thus
\begin{equation*}
    \sum_{\substack{n \in \Omega_{3}, \, |n|>2R}} (\cdots) \leqslant \sum_{|n|>2R} \frac{2^{s_{1}}}{(1+|n|)^{s_{1}+s_{2}}} \lesssim_{s_{1},s_{2},d} R^{d-s_{1}-s_{2}}.
\end{equation*}
Combining the estimates for three regions finishes the proof.
\end{proof}
   
By Lemma \ref{convolution} and $C_{i,h} \leqslant 1$, uniformly in $\theta \in \Phi_{i}^{h} \setminus \Phi_{i-1}$, we immediately have
\begin{equation}\label{3&4}
    \sum_{m \in \mathbb{Z}^d}\big((\mathrm{III})_{m}+(\mathrm{IV})_{m}\big) \leqslant \frac{2\tilde{C}(k,d)}{(1+|p-q|)^{k}}, 
\end{equation}
and 
\begin{equation}\label{5&6}
    \sum_{m \in \mathbb{Z}^{d}}\big((\mathrm{V})_{m}+ (\mathrm{VI})_{m}\big) \leqslant \frac{2C_{i,h}\tilde{C}(k,d)(1+h)^{k}}{(1+|p-q|)^{k}}.
\end{equation}
Substituting \eqref{3&4} and \eqref{5&6} into \eqref{siu} implies
\begin{equation*}
	\begin{split}
		(\mathrm{II}) & \leqslant \, \sum_{i=1}^{i_{0}}\sum_{h=0}^{h_{i}} 2C^{2}_{i}\mu(\Phi^{h}_{i} \setminus \Phi_{i-1})\tilde{C}(k,d)\frac{1+C_{i,h}(1+h)^{k}}{(1+|p-q|)^{k}} \\ 
        & \lesssim_{k,d}\sum_{i=1}^{i_{0}} C^{2}_{i}\mu(\mathbb{T} \setminus \Phi_{i-1})\frac{1+\sup_{0 \leqslant h \leqslant h_{i}}C_{i,h}(1+h)^{k}}{(1+|p-q|)^{k}}. 
	\end{split}
\end{equation*} 
By Theorem \ref{goodeig}\ref{item:res2}, replacing $\tau$ by $2\tau$,  
\begin{equation*}
	\sum_{i=1}^{\infty}C^{2}_{i}\mu(\mathbb{T} \setminus \Phi_{i-1})\big(1+\sup_{0 \leqslant h \leqslant h_{i}}C_{i,h}(1+h)^{k}\big) \lesssim_{k,\tau}\sum_{i=1}^{\infty} \gamma_{i}^{1-c}<\infty. 
\end{equation*}
Thus, 
\begin{equation}\label{II}
	(\mathrm{II}) \lesssim_{k,d,\tau} \frac{1}{(1+|p-q|)^{k}}. 
\end{equation}
Combining \eqref{I} with \eqref{II} yields
\begin{equation*}
    \begin{split}
        \int_{\Theta^{\tau}} \sum_{m \in \mathbb{Z}^d}| \overline{u_{m,\theta}(p)} u_{m,\theta}(q)| \, \mathrm{d}\mu(\theta) \leqslant C(1+|p-q|)^{-k}, 
    \end{split}
\end{equation*}
where $C=C(k,d,\tau)>0$. Estimate \eqref{ineq} follows immediately. This finishes the proof. 
\end{proof}

\section{\texorpdfstring{$C^k$}{Ck}-reducibility of quasi-periodic cocycles}\label{Reducibility}

In this section, we prove the quantitative $C^k$-reducibility for $\mathrm{SL}(2,\mathbb{R})$-valued cocycle with Diophantine frequency in the local perturbation regime. As a corollary, we prove the long-range operator with polynomially decaying hopping $L_{V,\alpha,\theta}$ has arithmetic $k$-PSL and $k$-PDL.

\subsection{\texorpdfstring{$C^k$}{ck}-almost reducibility.}

For any $0<r'<r$, we denote
\begin{equation*}
	\epsilon_0'(r,r')=\frac{c}{(2\|A\|)^{\tilde{D}}}(r-r')^{D\tau}.
\end{equation*}
For any $m \geqslant 2, m \in \mathbb{Z}^{+}$, we define the sequences 
\begin{equation}\label{epsN}
	\epsilon_m=\frac{c/2}{(2\|A\|)^{\tilde{D}}m^{D\tau}}, \quad N_m=\frac{2|\ln\epsilon_m|}{\frac{1}{m}-\frac{1}{m^{1+s}}},
\end{equation}
where $s>0$. It is obvious that there exists $m_{0}=m_{0}(D,\tau,s)$ such that for any $m \geqslant m_0$, 
\begin{equation}\label{ms}
	\epsilon_{m} \leqslant \epsilon_{0}'\bigg(\frac{1}{m},\frac{1}{m^{1+s}}\bigg). 
\end{equation}
Let $l_j=L^{(1+s)^{j-1}}$ and $r_j=1/{l_j}$ for $j \in \mathbb{N}^+$, where $L=L(\kappa,\tau,d,\|A\|,\sigma,D,\tilde{D},s) \geqslant \max\{(2\|A\|)^{\tilde{D}}/c,m_0\}$ is an integer. Because $l_j$ may not be an integer, we replace $l_j$ with $\lfloor l_j\rfloor+1$ when necessary. 

The following $C^{k}$-almost reducibility theorem originates from \cite{MR4512234,cai2024quantitative}. We provide the whole proof because of the new definition of $\epsilon_{m}$ and our improved estimates.
\begin{theorem}\label{Ckalmost}
    Let $\alpha \in \mathrm{DC}_d(\kappa, \tau)$, $\sigma \in (0,1/6)$, $s \in (0,1-\frac{7}{2}\sigma)$ and $A \in \mathrm{SL}(2, \mathbb{R})$. Suppose that $f \in C^{k'}(\mathbb{T}^d, \mathrm{sl}(2, \mathbb{R}))$ with $k'>\frac{2}{\sigma}(1+s+\sigma)\tau+1$. Let $D>0$ be such that
    \begin{equation}\label{Drange}
        \max \bigg\{\frac{2(k'-1)}{(4-5\sigma)\tau}, \frac{2}{\sigma}\bigg\}<D<\frac{k'-1}{(1+\sigma+s)\tau}.
    \end{equation}
    Then there exists $\epsilon_1^*=\epsilon_1^*(\kappa,\tau,d,k',\|A\|,\sigma,s,D)>0$ such that if 
	\begin{equation}\label{pertur1}
		\|f\|_{k'} \leqslant \epsilon_1^*,
	\end{equation}
	then for each $j \geqslant 1$, let $\{f_{l_j}\}_{j \geqslant 1}$ be the analytic approximation sequence of $f$ given in Section~\ref{AA}, we have:
	\begin{enumerate}[label=$(\Alph*)_{j}$]
		\item \label{item:comega} ($C^{\omega}$-conjugacy) There exist $B_{l_j} \in C^{\omega}_{r_{j+1}}(2\mathbb{T}^d, \mathrm{SL}(2, \mathbb{R}))$, $f'_{l_j} \in C^{\omega}_{r_{j+1}}(\mathbb{T}^d, \mathrm{sl}(2, \mathbb{R}))$, and $A_{l_j} \in \mathrm{SL}(2, \mathbb{R})$ such that
		      \begin{equation*}
		      	B_{l_j}^{-1}(x+\alpha) Ae^{f_{l_j}(x)} B_{l_j}(x)=A_{l_j} e^{f'_{l_j}(x)},
		      \end{equation*}
		      with the estimates
		      \begin{equation*}
		      	\begin{split}
		      		&\|B_{l_j}\|_0, |B_{l_j}|_{r_{j+1}} \leqslant \epsilon_{l_j}^{-\sigma/2}, \quad |\deg B_{l_j}| \leqslant 8l_j|\ln{\epsilon_{l_j}}|,\\
                    &\|A_{l_j}\| \leqslant 2 \|A\|, \quad |f'_{l_j}|_{r_{j+1}} \leqslant \epsilon_{l_j}^{2-\frac{7}{2}\sigma}, 
		      	\end{split}
		      \end{equation*}
		      
		\item \label{item:ck} ($C^{k}$-conjugacy) Moreover, for any $k< \frac{k'-\sigma D\tau-1}{1+s}, k \in \mathbb{N}$, there exists $\overline{f}_{l_j} \in C^{k}(\mathbb{T}^d, \mathrm{sl}(2, \mathbb{R}))$ such that
		      \begin{equation*}
		      	B_{l_j}^{-1}(x+\alpha) Ae^{f(x)} B_{l_j}(x) = A_{l_j} e^{\overline{f}_{l_j}(x)},
		      \end{equation*}
		      with the estimates 
		      \begin{equation*}
		      	\|\overline{f}_{l_j}\|_{k}\leqslant 4l_{j}^{(1+s)k-k'+1+\sigma D\tau}, \quad \|\overline{f}_{l_j}\|_0 \leqslant \epsilon_{l_{j+1}}.
		      \end{equation*}
              
        \item \label{item:nre} {(Non-resonant step)} If for any $n \in \mathbb{Z}^d$ with $0<|n| \leqslant N_{l_j}$,  
            \begin{equation*}
                \|2\rho(A_{l_{j-1}}) - \langle n,\alpha \rangle\|_{\mathbb{R}/\mathbb{Z}} \geqslant \epsilon_{l_j}^\sigma,
            \end{equation*}
            then the conjugation $B_{l_{j}}$ takes the form $B_{l_j}=B_{l_{j-1}} \cdot \widetilde{B}_{l_{j-1}}$, with the estimates 
            \begin{equation*}
                \begin{split}
                    & |\widetilde{B}_{l_{j-1}}-\mathrm{Id}|_{r_{j+1}} \leqslant \epsilon_{l_j}^{1-\frac{7}{2}\sigma}, \quad \deg \widetilde{B}_{l_{j-1}}=0, \\ 
                    & \|A_{l_j}-A_{l_{j-1}}\| \leqslant 2\|A_{l_{j-1}}\|\epsilon_{l_j}. 
                \end{split}
            \end{equation*}
        
		\item \label{item:re} {(Resonant step)}
        If there exists $n^*_{l_j} \in \mathbb{Z}^d$ with $0<|n^*_{l_j}| \leqslant N_{l_j}$ such that
		      \begin{equation*}
		      	\|2\rho(A_{l_{j-1}})-\langle n^*_{l_j}, \alpha \rangle\|_{\mathbb{R}/\mathbb{Z}}<\epsilon_{l_j}^\sigma,
		      \end{equation*}
		      then the conjugation $B_{l_{j}}$ takes the form $B_{l_j} = B_{l_{j-1}} \cdot \widetilde{B}_{l_{j-1}}$, where
		      \begin{equation*}
		      	\widetilde{B}_{l_{j-1}}(x)=P_{l_j}^{-1} \cdot e^{Y_{l_j}(x)} \cdot R_{\langle n^*_{l_j}, x \rangle/2} \in C^{\omega}_{r_{j+1}}(2\mathbb{T}^d, \mathrm{SL}(2,\mathbb{R})),
		      \end{equation*}
		      with $P_{l_j} \in \mathrm{SL}(2,\mathbb{R})$ and $Y_{l_j} \in C^{\omega}_{r_{j+1}}(\mathbb{T}^d, \mathrm{sl}(2,\mathbb{R}))$ satisfying the estimates
		      \begin{equation*}
		      	\begin{split}
		      		& \deg \widetilde{B}_{l_{j-1}}=n^*_{l_j}, \quad \|P_{l_j}\| \leqslant 4\|A_{l_{j-1}}\|^{1/2} \kappa^{-1/2}|n^*_{l_j}|^{\tau/2}, \\
		      		& |Y_{l_j}|_{r_{j+1}}\leqslant  4 \|A_{l_{j-1}}\|^{1/2} \kappa^{-1/2} |n^*_{l_j}|^{\tau/2} \epsilon_{l_j}^{1/2}.
		      	\end{split} 
		      \end{equation*}
		    Moreover, there exist $t_j \in \mathbb{R}$ and $\nu_j \in \mathbb{C}$ such that
		      \begin{equation*}
		      	A_{l_j}=M^{-1} \exp\begin{pmatrix} it_j & \nu_j \\ \bar{\nu}_j & -it_j \end{pmatrix} M,
		      \end{equation*}
            with 
		      \begin{equation*}
		      	\begin{split}
		      	    &|t_{j}| \leqslant 4\epsilon_{l_j}^{\sigma}, \quad \|2\rho(A_{l_j})\|_{\mathbb{R}/\mathbb{Z}} \leqslant 4 \epsilon_{l_j}^\sigma, \\
                    &|\nu_j| \leqslant 64\|A_{l_{j-1}}\| \kappa^{-1} |n^*_{l_j}|^{\tau} \epsilon_{l_j} e^{-2\pi |n^*_{l_j}| r_j}.
		      	\end{split}
		      \end{equation*}
    \end{enumerate}
\end{theorem}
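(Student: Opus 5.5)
The plan is an induction on $j$ driven by an analytic KAM scheme applied to the analytic approximations $f_{l_j}$ of Section~\ref{AA}, with the $C^k$ statement obtained afterwards by comparing $f$ with $f_{l_j}$. The analytic KAM step from Appendix~B is the main tool. Given $(\alpha, A_{l_{j-1}}e^{g})$ with $g$ analytic on a strip of width $r$ and $|g|_r\leqslant \epsilon_0'(r,r')$, it produces a conjugacy on the narrower strip $r'$ and reduces $g$ to size $\epsilon^{2-O(\sigma)}$. Two alternatives occur. In the non-resonant case, no $0<|n|\leqslant N$ satisfies $\|2\rho-\langle n,\alpha\rangle\|<\epsilon^\sigma$; a homological equation with Diophantine small divisors, truncated at $N$, then gives a conjugacy $e^{Y}$ with $\deg=0$. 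In the resonant case, a unique such $n^*$ exists. It is removed by $P^{-1}$, which diagonalizes $A$ and has norm controlled by $|n^*|^{\tau/2}\kappa^{-1/2}$, composed with the rotation $R_{\langle n^*,x\rangle/2}$. This rotation shifts the rotation number by $\langle n^*,\alpha\rangle/2$ and yields the bounds on $t_j,\nu_j$. The bound on $\nu_j$ comes from the Fourier coefficient of the perturbation at $n^*$, which is at most $\epsilon e^{-2\pi|n^*|r}$ by analyticity. These are exactly the estimates in \ref{item:nre} and \ref{item:re}.

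The induction proceeds as follows. For $j=1$, take $r=r_1=1/L$ and use \eqref{ms} with \eqref{pertur1}. This requires $|f_{l_1}|_{r_1}\leqslant C'\|f\|_{k'}\leqslant\epsilon_{l_1}$, which fixes $\epsilon_1^*$ once $L$ is large. For the step $j-1\to j$, write
\[
B_{l_{j-1}}^{-1}(x+\alpha)Ae^{f_{l_j}}B_{l_{j-1}}=A_{l_{j-1}}e^{f'_{l_{j-1}}+\Delta_j},
\]
where $\Delta_j$ is the contribution of $f_{l_j}-f_{l_{j-1}}$. By \eqref{aa} and the bound $|B_{l_{j-1}}|_{r_j}\leqslant\epsilon_{l_{j-1}}^{-\sigma/2}$,
\[
|\Delta_j|_{r_{j+1}}\lesssim \epsilon_{l_{j-1}}^{-\sigma}l_{j-1}^{-k'}\|f\|_{k'}.
\]
The condition $D<(k'-1)/((1+\sigma+s)\tau)$ in \eqref{Drange} makes $l_{j-1}^{-k'}$ beat $\epsilon_{l_j}=c'\,l_j^{-D\tau}=c'\,l_{j-1}^{-(1+s)D\tau}$ even after the loss $\epsilon^{-\sigma}$. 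The lower bound on $D$ ensures that $\epsilon_{l_{j-1}}^{2-\frac72\sigma}\leqslant\epsilon_{l_j}$. So the total perturbation is at most $\epsilon_{l_j}\leqslant\epsilon_0'(r_j,r_{j+1})$, and one KAM step applies with $N=N_{l_j}$. The conjugacy bounds propagate as follows.
\begin{itemize}
\item \emph{Non-resonant steps} multiply the norm by $1+\epsilon^{1-\frac72\sigma}$.
\item \emph{Resonant steps} contribute $|n^*|^{\tau/2}e^{2\pi|n^*|r_{j+1}}$, which is bounded by $\epsilon_{l_j}^{-\sigma/4}$ because $|n^*|\leqslant N_{l_j}\sim l_j|\ln\epsilon_{l_j}|$ and $r_{j+1}N_{l_j}\lesssim|\ln\epsilon_{l_j}|/l_j^{s}$. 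Resonances also cannot occur at consecutive scales too often, since after a resonance $|\rho|\leqslant 2\epsilon^\sigma$ and Diophantine separation holds.
\item \emph{Degree.} Summing $|n^*|$ over the steps gives $|\deg B_{l_j}|\leqslant 8l_j|\ln\epsilon_{l_j}|$.
\end{itemize}

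For the $C^k$ part \ref{item:ck}, set
\[
e^{\overline f_{l_j}}=A_{l_j}^{-1}B_{l_j}^{-1}(x+\alpha)Ae^{f}B_{l_j}.
\]
It differs from $e^{f'_{l_j}}$ by the term coming from $f-f_{l_j}=\sum_{i\geqslant j}(f_{l_{i+1}}-f_{l_i})$. This term is analytic piece by piece, so by the Cauchy estimate \eqref{norm} and \eqref{aa},
\[
\|B_{l_j}\|_{k}^{2}\,\|f-f_{l_j}\|_{k}\lesssim l_{j+1}^{k}\,\epsilon_{l_j}^{-\sigma}\,l_j^{-k'+1},
\]
where the factor $l_{j+1}^k$ comes from $r_{j+1}^{-k}$. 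Since $l_{j+1}^k=l_j^{(1+s)k}$ and $\epsilon_{l_j}^{-\sigma}\sim l_j^{\sigma D\tau}$, this is at most $4l_j^{(1+s)k-k'+1+\sigma D\tau}$. The $C^k$ norm of $f'_{l_j}$ from \eqref{norm} is far smaller. The $C^0$ bound uses the same decomposition without derivatives, together with $|f'_{l_j}|\leqslant\epsilon_{l_j}^{2-\frac72\sigma}\leqslant\epsilon_{l_{j+1}}$.

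I expect the main obstacle to be the bookkeeping of exponents: checking that the range \eqref{Drange} together with $s<1-\frac72\sigma$ makes every inequality hold uniformly in $j$. These inequalities are the perturbation-size inequality, $\epsilon^{2-\frac72\sigma}\leqslant\epsilon_{l_{j+1}}$, and the resonant-step bound $|B|\leqslant\epsilon^{-\sigma/2}$. I would first write every quantity as a power of $l_j$ and reduce each requirement to a linear inequality in $D$.
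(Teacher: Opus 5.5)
Your outline follows the paper's proof. You apply Theorem~\ref{KAM} once per scale to $A_{l_{j-1}}e^{\tilde f}$ with $|\tilde f|_{r_j}\leqslant 2\bigl(\epsilon_{l_{j-1}}^{2-\frac72\sigma}+\epsilon_{l_{j-1}}^{-\sigma}\,2\epsilon_{l_1}l_{j-1}^{-(k'-1)}\bigr)\leqslant\epsilon_{l_j}$, split into resonant and non-resonant steps, and prove $(B)_j$ by telescoping with Cauchy estimates.

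The one step that is too weak as stated is keeping $\|B_{l_j}\|_0,|B_{l_j}|_{r_{j+1}}\leqslant\epsilon_{l_j}^{-\sigma/2}$ through several resonances. Knowing only that resonances are rare or non-consecutive does not suffice. The quantity $\ln(1/\epsilon_{l_j})$ grows only by the factor $1+s$ per step. If every other step were resonant over a long stretch, the product of your factors $\epsilon_{l_{j_i}}^{-\sigma/4}$ would be about $\epsilon_{l_{j_m}}^{-\frac{\sigma}{4}(1-(1+s)^{-2})^{-1}}$, which exceeds $\epsilon_{l_{j_m}}^{-\sigma/2}$ whenever $(1+s)^2<2$.

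What your mechanism actually yields, and what you need to state and use, is the polynomial gap of Lemma~\ref{adjacentres}:
\begin{itemize}
\item After a resonance, $\|2\rho\|_{\mathbb{R}/\mathbb{Z}}\leqslant 4\epsilon_{l_{j_i}}^{\sigma}$, and the intermediate non-resonant steps keep it below $8\epsilon_{l_{j_i}}^{\sigma}$.
\item Comparing with $\|\langle n^*,\alpha\rangle\|_{\mathbb{R}/\mathbb{Z}}\geqslant\kappa N_{l_{j_{i+1}}}^{-\tau}>10\epsilon_{l_{j_{i+1}}}^{D^{-1}+\lambda}$ (for $L$ large) forces $\epsilon_{l_{j_{i+1}}}<\epsilon_{l_{j_i}}^{\sigma D/(1+\lambda D)}$, where $0<\lambda<\min\{\sigma/2-D^{-1},D^{-1}\}$.
\item With $j^*$ the previous resonant step, this gives $\epsilon_{l_{j^*}}^{-\sigma/2}\leqslant\epsilon_{l_{j+1}}^{-(1+\lambda D)/(2D)}$.
\item The inequality $\frac{1+\lambda D}{2D}+\frac{\sigma}{4}<\frac{\sigma}{2}$, which is available only because $D>2/\sigma$, then closes the induction.
\end{itemize}

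Three smaller bookkeeping points:
\begin{itemize}
\item The telescoped approximation error is $\sum_{m\geqslant l_{j-1}}C'm^{-k'}\|f\|_{k'}\lesssim l_{j-1}^{-(k'-1)}\|f\|_{k'}$, not $l_{j-1}^{-k'}$. It is needed on the strip $r_j$ where the KAM step starts, not on $r_{j+1}$. This lost power is why \eqref{Drange} involves $k'-1$.
\item The inequality $\epsilon_{l_{j-1}}^{2-\frac72\sigma}\leqslant\epsilon_{l_j}$ comes from $s<1-\frac72\sigma$, not from the lower bound on $D$. The lower bound $D>\frac{2(k'-1)}{(4-5\sigma)\tau}$ is what makes $\|f'_{l_j}\|_k\lesssim l_j^{(1+s)k-(2-\frac72\sigma)D\tau}$ dominated by $l_j^{(1+s)k-(k'-1)+\sigma D\tau}$ in $(B)_j$. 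Your claim that this term is ``far smaller'' is not automatic; it is exactly this condition.
\item Taken literally, the natural bound for $\|B_{l_j}\|_k^2\|f-f_{l_j}\|_k$ is $l_{j+1}^{2k}l_j^{k-k'+1}\epsilon_{l_j}^{-\sigma}$, which misses the exponent in $(B)_j$. Instead, bound each conjugated increment $B_{l_j}^{-1}(f_{m+1}-f_m)B_{l_j}$ in sup norm on the strip of width $\min\{r_{j+1},1/(m+1)\}$, using only $|B_{l_j}|_{r_{j+1}}^2\leqslant\epsilon_{l_j}^{-\sigma}$. Then apply \eqref{norm} to each increment, which is presumably what your remark about $r_{j+1}^{-k}$ intends.
\end{itemize}
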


\begin{remark}
    For simplicity, we let $A_{l_{0}}=A$ and $B_{l_{0}}=\mathrm{Id}$.
\end{remark}

\begin{proof}
It follows from the assumptions $0<\frac{7}{2}\sigma+s<1$ and $k'-1>\frac{2}{\sigma}(1+\sigma+s)\tau$ that $D$ satisfying \eqref{Drange} exists.

We use induction to prove \ref{item:comega}, \ref{item:nre} and \ref{item:re}.

{\bf First step:} We choose $\epsilon_1^*$ to be sufficiently small such that $C'\epsilon_1^*<\epsilon_{l_1}$. By \eqref{pertur1}, we have
\begin{equation}\label{C'}
    C'\|f\|_{k'} \leqslant C'\epsilon_1^*<\epsilon_{l_1}. 
\end{equation}
Then, applying \eqref{aa} and \eqref{ms}, we obtain
\begin{equation*}
    |f_{l_1}|_{r_1} \leqslant \epsilon_{l_1} \leqslant \epsilon_0'(r_{1}, r_{2}),
\end{equation*}
which satisfies the assumption of Theorem \ref{KAM}.
By Theorem \ref{KAM}, there exist  $B_{l_1} \in C^\omega_{r_2}(2\mathbb{T}^d,\mathrm{SL}(2,\mathbb{R}))$, $A_{l_1} \in \mathrm{SL}(2,\mathbb{R})$ and $f_{l_1}' \in C^\omega_{r_2}(\mathbb{T}^d,\mathrm{sl}(2,\mathbb{R}))$ such that 
\begin{equation}\label{1stB}
	B_{l_{1}}^{-1}(x+\alpha) Ae^{f_{l_1}(x)} B_{l_1}(x)=A_{l_1} e^{f'_{l_1}(x)}. 
\end{equation}
Let $\widetilde{B}_{l_{0}}=B_{l_{1}}$. It is obvious that $B_{l_{1}}=B_{l_{0}} \cdot \widetilde{B}_{l_{0}}$. We distinguish two cases.

If the first step is non-resonant, then
\begin{equation*}
    |\widetilde{B}_{l_{0}}-\mathrm{Id}|_{r_{2}}<\epsilon_{l_{1}}^{1-\frac{7}{2}\sigma}, \quad \deg\widetilde{B}_{l_0}=0, \quad \|A_{l_{1}}-A\|\leqslant 2\|A\|\epsilon_{l_{1}}, \quad |f_{l_1}'|_{r_2} \leqslant \epsilon_{l_1}^{2-\frac{7}{2}\sigma}.
\end{equation*}
This proves $(C)_{1}$. 

If the first step is resonant, then there exist $P_{l_{1}} \in \mathrm{SL}(2,\mathbb{R})$, $Y_{l_{1}} \in C^{\omega}_{r_{2}}(\mathbb{T}^{d}, \mathrm{sl}(2,\mathbb{R}))$, $0<|n_{l_{1}}^{*}| \leqslant N_{l_{1}}$ with 
\begin{equation*}
	\begin{split}
        \|P_{l_1}\| \leqslant 4\|A\|^{1/2}\kappa^{-1/2}|n^*_{l_1}|^{\tau/2}, \quad |Y_{l_1}|_{r_{2}} \leqslant 4\|A\|^{1/2}\kappa^{-1/2}|n^*_{l_1}|^{\tau/2} \epsilon_{l_1}^{1/2}, 
	\end{split} 
\end{equation*}
such that $\widetilde{B}_{l_{0}}(x)=P_{l_1}^{-1} \cdot e^{Y_{l_1}(x)} \cdot R_{\langle n^*_{l_1}, x \rangle/2}$ and 
\begin{equation*}
    \deg B_{l_1}=\deg \widetilde{B}_{l_{0}}=n_{l_{1}}^{*}, \quad |f_{l_1}'|_{r_2} \leqslant \epsilon_{l_1}^{100}. 
\end{equation*}

Moreover, there exist $t_1 \in \mathbb{R}$ and $\nu_1 \in \mathbb{C}$ satisfying 
\begin{equation*}
	|t_{1}| \leqslant 4\epsilon_{l_1}^{\sigma}, \quad |\nu_1| \leqslant 64\|A\| \kappa^{-1} |n^*_{l_1}|^{\tau} \epsilon_{l_1} e^{-2\pi |n^*_{l_1}| r_1}.
\end{equation*}
such that 
	\begin{equation*}
		A_{l_1}=M^{-1}\exp\begin{pmatrix} 
              it_1 & \nu_1 \\ 
              \bar{\nu}_1 & -it_1 
            \end{pmatrix}M, \quad \|2\rho(A_{l_1})\|_{\mathbb{R}/\mathbb{Z}} \leqslant 4\epsilon_{l_1}^\sigma.
	\end{equation*}
This proves $(D)_{1}$.

Take the worst scenario into consideration, we have 
\begin{equation*}
    \begin{split}
        & \|B_{l_1}\|_0, |B_{l_1}|_{r_2} \leqslant 8\|A\|^{1/2}\kappa^{-1/2}|n^{*}_{l_1}|^{\tau/2} e^{\pi|n^{*}_{l_1}|r_2} \leqslant \epsilon_{l_1}^{-\sigma/2}, \\  
        & |\deg B_{l_1}| \leqslant N_{l_{1}} \leqslant 4l_1|\ln \epsilon_{l_1}|, \\ 
        & \|A_{l_1}\| \leqslant 2\|A\|,\quad |f'_{l_{1}}|_{r_{2}} \leqslant \epsilon_{l_{1}}^{2-\frac{7}{2}\sigma}. 
    \end{split}
\end{equation*}
This proves $(A)_{1}$.

{\bf Inductive step:} Assume that $(A)_{j}, (C)_{j}, (D)_{j}$ hold from the first step to $j$-th step, we will show $(A)_{j+1}, (C)_{j+1}, (D)_{j+1}$ hold for $(j+1)$-th step. By inductive assumption, we already have
\begin{equation*}
	B_{l_j}^{-1}(x+\alpha)Ae^{f_{l_j}(x)} B_{l_j}(x)=A_{l_j}e^{f'_{l_j}(x)}, 
\end{equation*}
then
\begin{equation*}
  \begin{split}
     B^{-1}_{l_j}(x+\alpha)Ae^{f_{l_{j+1}}(x)}B_{l_j}(x)
     & =A_{l_j} e^{f_{l_j}'(x)}B^{-1}_{l_j}(x)e^{-f_{l_j}(x)}e^{f_{l_{j+1}}(x)}B_{l_j}(x) \\
     & \eqcolon A_{l_j} e^{\tilde{f}_{l_j}(x)}.
  \end{split}
\end{equation*}
Thus, by $2-\frac{7}{2}\sigma>\frac{k'-1}{D\tau}-\sigma>1+s$, \eqref{aa}, $(A)_{j}$ and Baker-Campbell-Hausdorff formula, 
\begin{equation*}
  \begin{split}
     |\tilde{f}_{l_j}|_{r_{j+1}}
     & =|\ln(e^{f_{l_j}'}B^{-1}_{l_j}e^{-f_{l_j}}e^{f_{l_{j+1}}}B_{l_j})|_{r_{j+1}} \\ 
     & \leqslant 2(|f_{l_j}'|_{r_{j+1}}+|B^{-1}_{l_j}|_{r_{j+1}}|f_{l_{j+1}}-f_{l_j}|_{r_{j+1}}|B_{l_j}|_{r_{j+1}}) \\ 
     & \leqslant 2(\epsilon_{l_j}^{2-\frac{7}{2}\sigma} + \epsilon_{l_{j}}^{-\sigma} 2\epsilon_{l_{1}}l_{j}^{-(k'-1)}) \leqslant \epsilon_{l_{j+1}}  \leqslant \epsilon'_0(r_{j+1},r_{j+2}). 
  \end{split}
\end{equation*}
Now we apply Theorem \ref{KAM} for the cocycle $(\alpha,A_{l_j}e^{\tilde{f}_{l_j}})$, there exist $\widetilde{B}_{l_j} \in C^\omega_{r_{j+2}}(2\mathbb{T}^d,\mathrm{SL}(2,\mathbb{R}))$, $A_{l_{j+1}} \in \mathrm{SL}(2,\mathbb{R})$ and $f_{l_{j+1}}' \in C^\omega_{r_{j+2}}(\mathbb{T}^d,\mathrm{sl}(2,\mathbb{R}))$ such that 
\begin{equation*}
	\widetilde{B}_{l_j}^{-1}(x+\alpha) A_{l_j}e^{\tilde{f}_{l_j}(x)} \widetilde{B}_{l_j}(x)=A_{l_{j+1}} e^{f'_{l_{j+1}}(x)},  
\end{equation*}
which gives 
\begin{equation}\label{indeuctiveB}
	B_{l_{j+1}}^{-1}(x+\alpha) Ae^{f_{l_{j+1}}(x)} B_{l_{j+1}}(x)=A_{l_{j+1}} e^{f'_{l_{j+1}}(x)},  
\end{equation}
where $B_{l_{j+1}}=B_{l_j} \cdot \widetilde{B}_{l_j}$. We again distinguish two cases.

If $(j+1)$-th step is non-resonant, then Theorem \ref{KAM}(A) implies
\begin{equation*}
    |\widetilde{B}_{l_{j}}-\mathrm{Id}|_{r_{j+2}}<\epsilon_{l_{j+1}}^{1-\frac{7}{2}\sigma}, \ \deg \widetilde{B}_{l_j}=0, \ \|A_{l_{j+1}}-A_{l_{j}}\|\leqslant 2\|A_{l_{j}}\|\epsilon_{l_{j+1}}, \ |f_{l_{j+1}}'|_{r_{j+2}} \leqslant \epsilon_{l_{j+1}}^{2-\frac{7}{2}\sigma}.
\end{equation*}
This proves $(C)_{j+1}$.

If $(j+1)$-th step is resonant, then Theorem \ref{KAM}(B) implies that there exist $P_{l_{j+1}} \in \mathrm{SL}(2,\mathbb{R})$, $Y_{l_{j+1}}\in C^{\omega}_{r_{j+2}}(\mathbb{T}^{d}, \mathrm{sl}(2,\mathbb{R}))$, $0<|n_{l_{j+1}}^{*}|\leqslant N_{l_{j+1}}$ with 
\begin{equation*}
	\begin{split}
        \|P_{l_{j+1}}\| \leqslant 4\|A_{l_{j}}\|^{1/2}\kappa^{-1/2} |n^*_{l_{j+1}}|^{\tau/2}, \quad |Y_{l_{j+1}}|_{r_{j+2}} \leqslant  4\|A_{l_j}\|^{1/2}\kappa^{-1/2} |n^*_{l_{j+1}}|^{\tau/2} \epsilon_{l_{j+1}}^{1/2}, 
	\end{split} 
\end{equation*}
such that $\widetilde{B}_{l_{j}}(x)=P_{l_{j+1}}^{-1} \cdot e^{Y_{l_{j+1}}(x)} \cdot R_{\langle n^*_{l_{j+1}}, x \rangle/2}$ and 
\begin{equation*}
    \deg \widetilde{B}_{l_{j}}=n_{l_{j+1}}^{*}, \quad |f_{l_{j+1}}'|_{r_{j+2}} \leqslant \epsilon_{l_{j+1}}^{100}. 
\end{equation*}
Thus by $D^{-1}< \sigma/2$,
\begin{equation*}
    |\widetilde{B}_{l_{j}}|_{r_{j+2}}\leqslant 8\|A_{l_j}\|^{1/2}\kappa^{-1/2}|n^{*}_{l_{j+1}}|^{\tau/2} e^{\pi|n^{*}_{l_{j+1}}|r_{j+2}}\leqslant \epsilon_{l_{j+1}}^{-\sigma/4}.
\end{equation*}

Moreover, there exist $t_{j+1} \in \mathbb{R}$ and $\nu_{j+1} \in \mathbb{C}$ satisfying 
\begin{equation*}
	|t_{j+1}| \leqslant 4\epsilon_{l_{j+1}}^{\sigma}, \quad |\nu_{j+1}| \leqslant 64\|A_{l_j}\| \kappa^{-1} |n^*_{l_{j+1}}|^{\tau} \epsilon_{l_{j+1}} e^{-2\pi |n^*_{l_{j+1}}| r_{j+1}}.
\end{equation*}
such that 
	\begin{equation*}
		A_{l_{j+1}}=M^{-1} \exp\begin{pmatrix} 
        it_{j+1} & \nu_{j+1} \\ 
        \bar{\nu}_{j+1} & -it_{j+1} \end{pmatrix} M, \quad \|2\rho(A_{l_{j+1}})\|_{\mathbb{R}/\mathbb{Z}} \leqslant 4 \epsilon_{l_{j+1}}^\sigma.
	\end{equation*}
This proves $(D)_{j+1}$.

\begin{Lemma}\label{adjacentres}
    Let $0<\lambda<\min\{\sigma/2-D^{-1},D^{-1}\}$. For any adjacent resonant steps $j_{i}$ and $j_{i+1}$, we have $\epsilon_{l_{j_{i+1}}}<\epsilon_{l_{j_{i}}}^{\frac{\sigma D}{1+\lambda D}}$.
\end{Lemma}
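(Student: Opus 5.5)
The plan is to compare the two resonance conditions through the Diophantine property of $\alpha$. The resonance at step $j_i$ forces $\rho(A_{l_{j_i}})$ to be close to $0$ modulo $\tfrac12\mathbb{Z}$. The intermediate non-resonant steps move the rotation number only slightly. The resonance at step $j_{i+1}$ then forces $\langle n^*,\alpha\rangle$ to be small for some $0<|n^*|\le N_{l_{j_{i+1}}}$, which $\alpha\in\mathrm{DC}_d(\kappa,\tau)$ only permits if $N_{l_{j_{i+1}}}$ is large. Since $N_m$ and $\epsilon_m$ are both explicit powers of $m$ (up to logarithms), this converts into the claimed comparison between $\epsilon_{l_{j_{i+1}}}$ and $\epsilon_{l_{j_i}}$.

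\emph{Step 1: the rotation number just after step $j_i$.} By $(D)_{j_i}$ of Theorem~\ref{Ckalmost},
\begin{equation*}
\|2\rho(A_{l_{j_i}})\|_{\mathbb{R}/\mathbb{Z}}\le 4\epsilon_{l_{j_i}}^{\sigma}.
\end{equation*}
\emph{Step 2: drift across the non-resonant steps $j_i<j<j_{i+1}$.} By $(C)_j$ we have $\|A_{l_j}-A_{l_{j-1}}\|\le 4\|A\|\epsilon_{l_j}$. I would use the elementary fact that for constant $\mathrm{SL}(2,\mathbb{R})$ matrices of bounded norm, $A\mapsto\rho(A)$ is $\tfrac12$-Hölder; this is the constant case of \eqref{con}, writing $A_{l_j}=A_{l_{j-1}}e^{g}$ with $\|g\|\lesssim\epsilon_{l_j}$. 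This gives
\begin{equation*}
\|2\rho(A_{l_{j_{i+1}-1}})-2\rho(A_{l_{j_i}})\|_{\mathbb{R}/\mathbb{Z}}\lesssim\sum_{j>j_i}\epsilon_{l_j}^{1/2}\le 2C\epsilon_{l_{j_i+1}}^{1/2}.
\end{equation*}
Here the sum is dominated by its first term, because $l_{j+1}=l_j^{1+s}$ makes $\epsilon_{l_{j+1}}\approx\epsilon_{l_j}^{1+s}$, which decays super-geometrically. Since $\epsilon_{l_{j_i+1}}^{1/2}\lesssim\epsilon_{l_{j_i}}^{(1+s)/2}$ and $(1+s)/2>\sigma$ (recall $\sigma<1/6$), this drift is $\ll\epsilon_{l_{j_i}}^{\sigma}$ once $L$ is large.

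\emph{Step 3: the second resonance and the Diophantine bound.} By the hypothesis of $(D)_{j_{i+1}}$ there is $0<|n^*|\le N_{l_{j_{i+1}}}$ with $\|2\rho(A_{l_{j_{i+1}-1}})-\langle n^*,\alpha\rangle\|<\epsilon_{l_{j_{i+1}}}^\sigma\le\epsilon_{l_{j_i}}^\sigma$. Combining this with Steps 1--2 gives
\begin{equation*}
\frac{\kappa}{N_{l_{j_{i+1}}}^{\tau}}\le\|\langle n^*,\alpha\rangle\|_{\mathbb{R}/\mathbb{Z}}\le 6\epsilon_{l_{j_i}}^{\sigma}.
\end{equation*}
\emph{Step 4: converting to $\epsilon$.} From \eqref{epsN}, writing $m=l_{j_{i+1}}$, we have $N_m\le 4m|\ln\epsilon_m|\le m^{1+\lambda D}$ for $m\ge L$ large, since $|\ln\epsilon_m|\lesssim\ln m$. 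We also have $m=\bigl(\tfrac{c/2}{(2\|A\|)^{\tilde D}\epsilon_m}\bigr)^{1/(D\tau)}$. Hence
\begin{equation*}
N_m^{-\tau}\ge m^{-\tau(1+\lambda D)}\gtrsim\epsilon_m^{(1+\lambda D)/D}.
\end{equation*}
Substituting into Step 3 yields $\epsilon_{l_{j_{i+1}}}^{(1+\lambda D)/D}\le C\epsilon_{l_{j_i}}^{\sigma}$, that is,
\begin{equation*}
\epsilon_{l_{j_{i+1}}}\le C'\epsilon_{l_{j_i}}^{\sigma D/(1+\lambda D)}.
\end{equation*}
To remove the constant $C'$ and obtain the strict inequality, I would spend a little of the slack: $\lambda$ was chosen strictly below $\min\{\sigma/2-D^{-1},D^{-1}\}$, so one can run Step 4 with a slightly smaller $\lambda'<\lambda$ and absorb $C'$ into $\epsilon_{l_{j_i}}^{\sigma D/(1+\lambda' D)-\sigma D/(1+\lambda D)}$, which is small once $L$ is large.

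The main obstacle is Step 2: bounding the rotation-number drift cleanly, uniformly for matrices near parabolic or elliptic ones, and making sure it is dominated by $\epsilon_{l_{j_i}}^{\sigma}$. If the $\tfrac12$-Hölder estimate for constant matrices is delicate, I would instead apply \eqref{con} directly to $(\alpha,A_{l_{j-1}}e^{g_j})$ with $g_j=\ln(A_{l_{j-1}}^{-1}A_{l_j})$.
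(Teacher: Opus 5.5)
Your proposal is correct and follows essentially the same route as the paper. The rotation number is $O(\epsilon_{l_{j_i}}^{\sigma})$ after the resonance at $j_i$, the non-resonant steps move it only slightly, and the resonance at $j_{i+1}$ is then played against the Diophantine lower bound $\kappa N_{l_{j_{i+1}}}^{-\tau}$ via $N_m\leqslant m^{1+\lambda D}$. The paper argues by contradiction and absorbs the constants into $\kappa N_{l_{j_{i+1}}}^{-\tau}>10\,\epsilon_{l_{j_{i+1}}}^{D^{-1}+\lambda}$ for $L$ large, which is equivalent to your $\lambda'<\lambda$ device (the room for $\lambda'$ comes simply from $\lambda>0$, not from the upper bound on $\lambda$), and your use of \eqref{con} for the drift makes explicit a step the paper only asserts.
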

\begin{proof}
    Theorem \ref{KAM} implies $\|2\rho(A_{l_{j_i}})\|_{\mathbb{R}/\mathbb{Z}} \leqslant 4\epsilon_{l_{j_{i}}}^{\sigma}$. Since every step between $j_i$ and $j_{i+1}$ is non-resonant, we also have $\|2\rho(A_{l_{j_{i+1}-1}})\|_{\mathbb{R}/\mathbb{Z}} \leqslant 8\epsilon_{l_{j_{i}}}^{\sigma}$. By the resonant condition at the step $j_{i+1}$, there exists $0<|n^{*}|\leqslant N_{l_{j_{i+1}}}$ such that $\|2\rho(A_{l_{j_{i+1}-1}})-\langle n^{*},\alpha\rangle\|_{\mathbb{R}/\mathbb{Z}}<\epsilon_{l_{j_{i+1}}}^{\sigma}$. However, by $\alpha\in \mathrm{DC}_{d}(\kappa,\tau)$ and \eqref{epsN}, we have 
    \begin{equation*}
        \|\langle n^{*},\alpha\rangle\|_{\mathbb{R}/\mathbb{Z}}\geqslant \frac{\kappa}{N_{l_{j_{i+1}}}^{\tau}}> 10\epsilon_{l_{j_{i+1}}}^{D^{-1}+\lambda}.
    \end{equation*}
    This yields a contradiction if $\epsilon_{l_{j_{i+1}}} \geqslant \epsilon_{l_{j_{i}}}^{\frac{\sigma D}{1+\lambda D}}$.
\end{proof}

To show $(A)_{j+1}$, we again consider two cases. If $(j+1)$-th is non-resonant, it follows from inductive assumption that
\begin{equation*}
    \|B_{l_{j+1}}\|_{0},|B_{l_{j+1}}|_{r_{j+2}} \leqslant 2|B_{l_{j}}|_{r_{j+1}} \leqslant 2\epsilon_{l_{j}}^{-\sigma/2} \leqslant \epsilon_{l_{j+1}}^{-\sigma/2}.
\end{equation*}
If $(j+1)$-th is resonant, we trace back to the resonant step $j^{*}$ which is closest to $(j+1)$. Since each step between $j^{*}$ and $(j+1)$ is non-resonant, Lemma \ref{adjacentres} implies
\begin{equation*}
    \|B_{l_{j+1}}\|_{0}, |B_{l_{j+1}}|_{r_{j+2}}\leqslant 2 |B_{l_{j^{*}}}|_{r_{j^{*}+1}} |\widetilde{B}_{l_{j}}|_{r_{j+2}} \leqslant 2\epsilon_{l_{j^{*}}}^{-\frac{\sigma}{2}} \epsilon_{l_{j+1}}^{-\frac{\sigma}{4}}  \leqslant 2\epsilon_{l_{j+1}}^{-\frac{1+\lambda D}{2D}-\frac{\sigma}{4}}\leqslant \epsilon_{l_{j+1}}^{-\frac{\sigma}{2}}.
\end{equation*}
If there is no such $j^*$, then 
\begin{equation*}
    \|B_{l_{j+1}}\|_0, |B_{l_{j+1}}|_{r_{j+2}} \leqslant 2|\widetilde B_{l_j}|_{r_{j+2}} \leqslant 2\epsilon_{l_{j+1}}^{-\sigma/4} \leqslant \epsilon_{l_{j+1}}^{-\sigma/2}. 
\end{equation*}
In both cases, we have 
\begin{equation*}
    \begin{split}
        & |\deg B_{l_{j+1}}|\leqslant 8l_{j}|\ln\epsilon_{l_{j}}|+N_{l_{j+1}} \leqslant 8l_{j+1}|\ln \epsilon_{l_{j+1}}|, \\ 
        & \|A_{l_{j+1}}\| \leqslant 2\|A\|,\quad |f'_{l_{j+1}}|_{r_{j+2}}\leqslant \epsilon_{l_{j+1}}^{2-\frac{7}{2}\sigma}. 
    \end{split}
\end{equation*}
This proves $(A)_{j+1}$. Thus we prove that \ref{item:comega}, \ref{item:nre} and \ref{item:re} hold for all $j \in \mathbb{N}$.

Finally, let's prove \ref{item:ck}. The direct calculation shows
\begin{equation*}
  \begin{split}
     B^{-1}_{l_{j}}(x+\alpha) Ae^{f(x)} B_{l_{j}}(x)
     & =A_{l_{j}}e^{f_{l_{j}}'(x)}B^{-1}_{l_{j}}(x)e^{-f_{l_{j}}(x)}e^{f(x)}B_{l_{j}}(x) \\
     & \eqcolon A_{l_{j}} e^{\overline{f}_{l_{j}}(x)}.
  \end{split}
\end{equation*} 
Fix $j$. Taking logarithms on the branch through $\mathrm{Id}$, we have
\begin{equation*}
    \overline{f}_{l_{j}}=B_{l_{j}}^{-1}\ln\big(e^{B_{l_{j}}f'_{l_{j}}B_{l_{j}}^{-1}}e^{-f_{l_{j}}}e^{f}\big)B_{l_{j}}.
\end{equation*}
By \eqref{aa} and $|B_{l_{j}}f'_{l_{j}}B_{l_{j}}^{-1}|_{r_{j+1}} \leqslant \epsilon_{l_{j}}^{2-\frac{9}{2}\sigma}$, the three-factor Baker-Campbell-Hausdorff map is uniformly Lipschitz in its third argument along the analytic approximations. Applying the Cauchy estimate \eqref{norm} to the logarithmic increments and taking a telescoping sum along the analytic approximations \eqref{aa} from $l_j$ to $+\infty$, we get 
\begin{equation*}
    \|\overline{f}_{l_{j}}-f'_{l_{j}}\|_{k} \leqslant C(k,d)l_{j}^{(1+s)k} \epsilon_{l_{j}}^{-\sigma} 2\epsilon_{l_{1}}l_{j}^{-(k'-1)}  \leqslant l_{j}^{(1+s)k+\sigma D\tau-(k'-1)}. 
\end{equation*}
Similarly, 
\begin{equation*}
     \|f_{l_{j}}'\|_{k} \leqslant C(k,d)(l_{j+1})^{k}|f_{l_{j}}'|_{r_{j+1}}  \leqslant C(k,d)(l_{j})^{(1+s)k}\epsilon_{l_{j}}^{2-\frac{7}{2}\sigma} \leqslant l_{j}^{(1+s)k-(2-\frac{7}{2}\sigma)D\tau}. 
\end{equation*}
Thus, by $2-\frac{7}{2}\sigma>\frac{k'-1}{D\tau}-\sigma>1+s$ again and $k<\frac{k'-\sigma D\tau-1}{1+s}$, we have 
\begin{equation*}
    \begin{split}
        \|\overline{f}_{l_{j}}\|_{k} & \leqslant 2(\|f_{l_{j}}'\|_{k} + \|\overline{f}_{l_{j}}-f_{l_{j}}'\|_{k}) \\ 
        & \leqslant 2(l_{j}^{(1+s)k-(2-\frac{7}{2}\sigma)D\tau}+l_{j}^{(1+s)k+\sigma D\tau-(k'-1)}) \leqslant 4l_{j}^{(1+s)k-k'+1+\sigma D\tau}.
    \end{split}
\end{equation*}
And
\begin{equation*}
  \begin{split}
     \|\overline{f}_{l_{j}}\|_0
     & \leqslant 2(\|f_{l_{j}}'\|_0 + \|B^{-1}_{l_{j}}\|_0\|f-f_{l_{j}}\|_0\|B_{l_{j}}\|_0) \\
     & \leqslant 2(\epsilon_{l_{j}}^{2-\frac{7}{2}\sigma}+\epsilon_{l_{j}}^{-\sigma} 2\epsilon_{l_{1}} l_{j}^{-(k'-1)}) \leqslant \epsilon_{l_{j}}^{\frac{k'-1}{D\tau} -\sigma} \leqslant \epsilon_{l_{j}}^{1+s} \leqslant \epsilon_{l_{j+1}}. 
  \end{split}
\end{equation*}
This proves $(B)_{j}$.
\end{proof}

\subsection{\texorpdfstring{$C^k$}{ck}-reducibility}
By repeatedly applying the almost reducibility theorem (Theorem \ref{Ckalmost}), we establish quantitative $C^{k}$-reducibility for $\mathrm{SL}(2,\mathbb{R})$-valued cocycles whose rotation number is Diophantine, in the regime of small perturbations.

\begin{theorem}\label{Ckreducibility}
    Let $\alpha \in \mathrm{DC}_d(\kappa, \tau)$, $\sigma \in (0,1/9)$, $s \in (0,1-\frac{7}{2}\sigma)$,  and $A \in \mathrm{SL}(2, \mathbb{R})$. Suppose that $f \in C^{k'}(\mathbb{T}^d, \mathrm{sl}(2, \mathbb{R}))$ and $\rho(\alpha,Ae^f) \in \Theta_\gamma^{2\tau}$, where $k'>\frac{18}{\sigma^2}(1+s+\sigma)\tau+1$. Let $D>0$ be a constant satisfying
    \begin{equation*}
        \max\bigg\{\frac{2(k'-1)}{(4-5\sigma)\tau},\frac{18}{\sigma^2}\bigg\}<D<\min\bigg\{\frac{k'-1}{(1+\sigma+s)\tau},\frac{k'-20}{8\sigma\tau}\bigg\}.
    \end{equation*} 
    Then for any $k \leqslant \frac{k'-8\sigma D\tau-4}{4(1+s)^2}, k \in \mathbb{N}$, there exists $\epsilon^*_2=\epsilon^*_2(\kappa,\tau,d,k',\|A\|,\sigma,s,D)>0$ such that if 
	\begin{equation*}
		\|f\|_{k'} \leqslant \epsilon^*_2,
	\end{equation*}
    there exist $\bar{A} \in \mathrm{SL}(2,\mathbb{R})$, $B \in C^{k}(2\mathbb{T}^d, \mathrm{SL}(2,\mathbb{R}))$, and $\widetilde{B} \in C^{k}(2\mathbb{T}^d, \mathrm{SL}(2, \mathbb{R}))$, $Y \in C^{k}(\mathbb{T}^d, \mathrm{sl}(2,\mathbb{R}))$, $n^* \in \mathbb{Z}^d$ such that $B$ takes the form
    \begin{equation*}
        B(x)=\widetilde{B}(x)\cdot R_{\frac{\langle n^*,x \rangle}{2}}\cdot e^{Y(x)}
    \end{equation*} 
    and     
    \begin{equation*}
		B^{-1}(x+\alpha)Ae^{f(x)}B(x)=\bar{A}=M^{-1}\exp\begin{pmatrix} 
                    it & \nu \\ 
                    \bar{\nu} & -it 
                \end{pmatrix}M
	\end{equation*}
	with the following estimates 
    \begin{equation*}
		\begin{split}
			& |n^{*}| \leqslant \gamma^{-\frac{2}{\sigma(k'-1)}},  \quad \|Y\|_k \leqslant (1+|n^{*}|)^{-\frac{k'}{2}}, \\
            & \|\widetilde{B}\|_{0}\|\widetilde{B}\|_{k} \leqslant \gamma^{-\frac{2\tau k'}{\sigma^2(k'-1)^2}}, \quad |\deg \widetilde{B}| \leqslant \gamma^{-\frac{10\tau}{\sigma^2(k'-1)^2}}, \\
            & |\nu| \leqslant (1+|n^*|)^{-(\frac{k'}{2}+2\tau)}, \, \text{if } n^{*} \neq 0, \quad \|2\rho(\bar{A})\|_{\mathbb{R}/\mathbb{Z}} \geqslant \frac{\gamma}{4^{\tau}(1+|n^*|)^{2\tau}}.  
		\end{split}
	\end{equation*}
\end{theorem}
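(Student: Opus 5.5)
We run the $C^k$-almost reducibility scheme of Theorem~\ref{Ckalmost} along the whole sequence $\{l_j\}$ and use the Diophantine condition $\rho(\alpha,Ae^f)\in\Theta^{2\tau}_\gamma$ to show that only finitely many resonant steps occur. The last resonant step $j_*$ will produce $n^*=n^*_{l_{j_*}}$. The first task is to bound $j_*$ in terms of $\gamma$. At a resonant step $j$ we have $\|2\rho(A_{l_{j-1}})-\langle n^*_{l_j},\alpha\rangle\|<\epsilon_{l_j}^{\sigma}$. By \ref{item:ck}, \eqref{rot2} and \eqref{con}, $\rho(A_{l_j})$ differs from $\rho(\alpha,Ae^f)-\langle\deg B_{l_j},\alpha\rangle/2$ by at most $\tilde c\,\epsilon_{l_{j+1}}^{1/2}$. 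Combining these, $2\rho(\alpha,Ae^f)$ lies within $2\epsilon_{l_j}^{\sigma}$ of $\langle \deg B_{l_j},\alpha\rangle$, with $|\deg B_{l_j}|\le 8l_j|\ln\epsilon_{l_j}|$. The condition $\Theta^{2\tau}_\gamma$ then forces
\[
\gamma\lesssim \epsilon_{l_j}^{\sigma}\,\bigl(l_j|\ln\epsilon_{l_j}|\bigr)^{2\tau}.
\]
Since $\epsilon_{l_j}\sim l_j^{-D\tau}$ and $\sigma D>18/\sigma$ is large, this gives $l_{j_*}\lesssim \gamma^{-c/(\sigma D\tau)}$. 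Because $D<(k'-1)/((1+\sigma+s)\tau)$, we have roughly $\sigma D\tau\approx \sigma(k'-1)$. This should produce $|n^*|\le N_{l_{j_*}}\le \gamma^{-2/(\sigma(k'-1))}$ and $|\deg\widetilde B|\le |\deg B_{l_{j_*-1}}|\le \gamma^{-10\tau/(\sigma^2(k'-1)^2)}$, after tracing exponents through Lemma~\ref{adjacentres}.

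After $j_*$ every step is non-resonant. We set $\widetilde B=B_{l_{j_*-1}}P_{l_{j_*}}^{-1}$ and use $e^{Y_{l_{j_*}}}R_{\langle n^*,x\rangle/2}=R_{\langle n^*,x\rangle/2}e^{R^{-1}Y_{l_{j_*}}R}$ to move the rotation to the middle. Note that $R^{-1}YR$ is $\mathbb{T}^d$-periodic in $\mathrm{sl}(2,\mathbb{R})$ up to the admissible sign convention. The tail product $\prod_{j>j_*}\widetilde B_{l_j}$ is then absorbed into $Y$ via BCH. By \ref{item:nre}, $|\widetilde B_{l_j}-\mathrm{Id}|_{r_{j+1}}\le\epsilon_{l_j}^{1-\frac72\sigma}$. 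The Cauchy estimate \eqref{norm} gives a $C^k$ cost $l_{j+1}^k$ for each factor, so the product converges in $C^k$ as long as $(1+s)k<(1-\frac72\sigma)D\tau$. This is guaranteed by $k\le (k'-8\sigma D\tau-4)/(4(1+s)^2)$. The limit $B$ conjugates $Ae^f$ to $\bar A=\lim A_{l_j}$, and the bound $\|A_{l_{j+1}}-A_{l_j}\|\le 4\|A\|\epsilon_{l_{j+1}}$ makes this limit exist. For $\|Y\|_k$, the resonant part satisfies $|Y_{l_{j_*}}|\lesssim |n^*|^{\tau/2}\epsilon_{l_{j_*}}^{1/2}$. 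We will convert this into a negative power of $1+|n^*|$ using $|n^*|\le N_{l_{j_*}}\sim l_{j_*}|\ln\epsilon_{l_{j_*}}|$ and $\epsilon_{l_{j_*}}\sim l_{j_*}^{-D\tau}$, which yields $\|Y\|_k\le(1+|n^*|)^{-k'/2}$ given $D\tau\gtrsim k'$. We bound $\|\widetilde B\|_0\|\widetilde B\|_k$ by $\epsilon_{l_{j_*}}^{-\sigma}l_{j_*}^{k}$ and rewrite it in terms of $\gamma$ using the bound on $l_{j_*}$.

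For $\nu$, we would like to use $\nu_{j_*}$ from \ref{item:re}. The subsequent non-resonant steps shift $\bar A$ by $O(\epsilon_{l_{j_*+1}})$. Since $\epsilon_{l_{j_*+1}}=\epsilon_{l_{j_*}}^{1+s}$ in the $l$-scaling, this perturbation is also polynomially small in $|n^*|$, so $|\nu|\le(1+|n^*|)^{-(k'/2+2\tau)}$. To get $\|2\rho(\bar A)\|$ from below, we use \eqref{rot3}: $2\rho(\bar A)=2\rho(\alpha,Ae^f)-\langle\deg B,\alpha\rangle$, where $\deg B=\deg\widetilde B+n^*$ is bounded polynomially. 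Applying the $\Theta^{2\tau}_\gamma$ condition directly would only give $\gamma(1+|\deg B|)^{-2\tau}$, not $\gamma(1+|n^*|)^{-2\tau}$. To avoid this loss we will instead pass through $\rho(A_{l_{j_*}})$, which is accurate up to $\epsilon$, and then use the closeness of $\bar A$ to $A_{l_{j_*}}$. The factor $4^{\tau}$ will absorb the constants.

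The main obstacle is this lower bound on the rotation number, together with the related exponent bookkeeping that converts every $\epsilon_{l_{j_*}}$-bound into a power of $\gamma$ or $|n^*|$ with exactly the stated exponents. This bookkeeping has to be consistent with the constraints on $D$, in particular with $D<(k'-20)/(8\sigma\tau)$. If a direct bound fails, we will split into two cases according to whether $|\deg\widetilde B|$ is comparable to $|n^*|$, and in the bad case reabsorb $\deg\widetilde B$ into the estimate through $\gamma$.
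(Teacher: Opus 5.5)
Your skeleton matches the paper's. You run Theorem~\ref{Ckalmost}, use $\Theta^{2\tau}_\gamma$ to force finitely many resonances, take $n^*$ from the last resonant step $j_m$ (your $j_*$), and get $l_{j_m}\le\gamma^{-1/((\sigma D-2-2\delta)\tau)}$. Turning this into $|n^*|\le\gamma^{-2/(\sigma(k'-1))}$ uses the lower bound $D>\frac{2(k'-1)}{(4-5\sigma)\tau}$, not the upper bound you cite. Beyond that, two of your concrete choices make the stated estimates unreachable.

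First, your bound $\|\widetilde B\|_0\|\widetilde B\|_k\lesssim\epsilon_{l_{j_*}}^{-\sigma}l_{j_*}^{k}$, combined with the only available control on $l_{j_*}$, gives at best $\gamma^{-(\sigma D\tau+k)/((\sigma D-2-2\delta)\tau)}$. That exponent exceeds $1$, while the claimed exponent $\frac{2\tau k'}{\sigma^2(k'-1)^2}$ is below $1/9$, and \eqref{cond} needs it small. The missing idea is sparsity of resonances. $B_{l_{j_m-1}}$ is $B_{l_{j_{m-1}}}$ times near-identity non-resonant factors, so its norms are governed by the \emph{previous} resonant scale. Lemma~\ref{adjacentres} (with $\lambda<\sigma^2/18-D^{-1}$) gives $l_{j_{m-1}}\le l_{j_m}^{(1+\lambda D)/(\sigma D)}$, a tiny power of $l_{j_m}$. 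Together with $\|P_{l_{j_m}}\|^2\lesssim|n^*|^{\tau}$, this yields the small $\gamma$-exponent; you planned this only for $|\deg\widetilde B|$.

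Second, moving $e^{Y_{l_{j_*}}}$ past the rotation into $Y$ cannot give $\|Y\|_k\le(1+|n^*|)^{-k'/2}$. The only bound is $|Y_{l_{j_*}}|\lesssim|n^*|^{\tau/2}\epsilon_{l_{j_*}}^{1/2}$ with $\epsilon_{l_{j_*}}^{1/2}\approx l_{j_*}^{-D\tau/2}$, and conjugating by $R_{\langle n^*,x\rangle/2}$ costs another $(1+|n^*|)^{k}$ in $C^k$. Meanwhile $|n^*|$ may be as large as $l_{j_*}^{1+\delta}$, and the hypotheses force $D\tau<\frac{k'-1}{1+\sigma+s}<k'$, so the condition ``$D\tau\gtrsim k'$'' you rely on never holds. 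The generic tail bound $\epsilon_{l_j}^{1-\frac{7}{2}\sigma}$ from Theorem~\ref{Ckalmost}\ref{item:nre} is also insufficient in general, e.g.\ for $D$ near its lower limit and small $s$.

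The paper instead keeps $e^{Y_{l_{j_m}}}$ inside $\widetilde B=B_{l_{j_m-1}}P_{l_{j_m}}^{-1}e^{Y_{l_{j_m}}}$, where it costs only a bounded factor. It lets $e^{Y}=\prod_{j\ge j_m}\widetilde B_{l_j}$ be the post-resonance tail alone. After a resonant step $|f'_{l_{j_m}}|_{r_{j_m+1}}\le\epsilon_{l_{j_m}}^{100}$, so the next perturbation is dominated by the approximation error $|f_{l_{j_m+1}}-f_{l_{j_m}}|\lesssim l_{j_m}^{-(k'-1)}$ rather than by $\epsilon_{l_{j_m+1}}$. Since the bound in Theorem~\ref{KAM}(A) is linear in $|f|_r$, this gives \eqref{nrej0}, which is where the exponent $k'$ in $\|Y\|_k$ comes from.

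Finally, for $\|2\rho(\bar A)\|_{\mathbb{R}/\mathbb{Z}}$, passing through $\rho(A_{l_{j_*}})$ gives no lower bound, since that quantity is at most $4\epsilon_{l_{j_*}}^{\sigma}$. The stated estimate also leaves no slack for ``reabsorbing'' $\deg\widetilde B$. The lower bound must come from $\Theta^{2\tau}_\gamma$ applied with $\deg B$, together with $|\deg B|\le\sum_i|n^*_{l_{j_i}}|\le 2|n^*|$. This holds because resonant sites grow superexponentially, $|n^*_{l_{j_{i-1}}}|\le|n^*_{l_{j_i}}|^{1/17}$. That follows by comparing $\|2\rho(A_{l_{j_i-1}})\|\le 8\epsilon_{l_{j_{i-1}}}^{\sigma}$ with the Diophantine lower bound $\frac{2\kappa}{3}|n^*_{l_{j_i}}|^{-\tau}$. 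So the bad case of your proposed split never occurs.
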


\begin{remark}
    In order to obtain the PDL, the conditions in Theorem~\ref{goodeig}\ref{item:res2} require stronger estimates after the reduction, which leads to a stronger initial regularity requirement on $k'$ and lower remaining regularity $k$ in the reduction.
\end{remark}

\begin{proof}
Choose $\epsilon_{2}^{*}<\epsilon_{1}^{*}$. The assumptions of Theorem \ref{Ckalmost} are satisfied.  By Theorem~\ref{Ckalmost}\ref{item:comega} and \ref{item:ck}, for $j \geqslant 1$, there exist sequences $B_{l_j} \in C^{\omega}_{r_{j+1}}(2\mathbb{T}^d, \mathrm{SL}(2, \mathbb{R}))$, $A_{l_j} \in \mathrm{SL}(2,\mathbb{R})$, and $\overline{f}_{l_j} \in C^{k}(\mathbb{T}^d, \mathrm{sl}(2,\mathbb{R}))$ such that
\begin{equation*}
    B_{l_j}^{-1}(x+\alpha)A e^{f(x)} B_{l_j}(x)=A_{l_j} e^{\overline{f}_{l_j}(x)},
\end{equation*}
with the following estimates:
\begin{equation}\label{estb1}
    \begin{split}
        & \|B_{l_j}\|_0, |B_{l_j}|_{r_{j+1}} \leqslant \epsilon_{l_j}^{-\sigma/2}, \quad |\deg B_{l_j}| \leqslant 8l_j |\ln\epsilon_{l_j}|, \\
        & \|\overline{f}_{l_j}\|_0 \leqslant \epsilon_{l_{j+1}}, 
        \quad \|A_{l_j}\| \leqslant 2\|A\|.
    \end{split}
\end{equation}
We claim that there are at most finitely many resonant steps. Otherwise, there exist arbitrarily large resonant indices $j$ such that 
\begin{equation}\label{contradiction}
    \|2\rho(A_{l_{j-1}})-\langle n^{*}_{l_{j}}, \alpha \rangle\|_{\mathbb{R}/\mathbb{Z}}<\epsilon_{l_{j}}^{\sigma}.
\end{equation}
Since $\rho(\alpha, Ae^f) \in \Theta_\gamma^{2\tau}$, it follows from~\eqref{rot2} that for any $n \in \mathbb{Z}^d$,
\begin{equation}\label{rotdc}
    \begin{split}
        \|2\rho(\alpha, A_{l_{j-1}} e^{\overline{f}_{l_{j-1}}})-\langle n, \alpha \rangle\|_{\mathbb{R}/\mathbb{Z}} 
        & =\|2\rho(\alpha, Ae^f)-\langle \deg B_{l_{j-1}}, \alpha \rangle-\langle n, \alpha \rangle\|_{\mathbb{R}/\mathbb{Z}} \\
        & \geqslant \frac{\gamma}{(|n+\deg B_{l_{j-1}}|+1)^{2\tau}}.
    \end{split}
\end{equation}
By \eqref{estb1} and \eqref{rotdc}, for sufficiently large $j$ (depending on $\gamma$), we have
\begin{equation*}
    \|2\rho(\alpha, A_{l_{j-1}} e^{\overline{f}_{l_{j-1}}})-\langle n^{*}_{l_{j}},\alpha \rangle\|_{\mathbb{R}/\mathbb{Z}} 
    \geqslant \frac{\gamma}{(N_{l_{j}}+8l_{j-1} |\ln\epsilon_{l_{j-1}}|+1)^{2\tau}}  \geqslant 2\epsilon_{l_{j}}^{\sigma}.
\end{equation*}
Combining the above estimate with \eqref{con} and \eqref{estb1}, we obtain
\begin{equation*}
    \begin{split}
        & \quad \ \|2\rho( A_{l_{j-1}})-\langle n^{*}_{l_{j}}, \alpha \rangle \|_{\mathbb{R}/\mathbb{Z}} \\
        & \geqslant \|2\rho(\alpha, A_{l_{j-1}} e^{\overline{f}_{l_{j-1}}})-\langle n^{*}_{l_{j}}, \alpha \rangle \|_{\mathbb{R}/\mathbb{Z}}-\|2\rho(\alpha, A_{l_{j-1}} e^{\overline{f}_{l_{j-1}}})-2\rho(A_{l_{j-1}})\|_{\mathbb{R}/\mathbb{Z}} \\
        & \geqslant 2\epsilon_{l_{j}}^{\sigma}-2\tilde{c} \,\epsilon_{l_{j}}^{\frac{1}{2}} \geqslant \epsilon_{l_{j}}^{\sigma}, 
    \end{split}
\end{equation*}
which contradicts to \eqref{contradiction}.

Now we prove the reducibility. If there are no resonant steps, set $n^*=0$, $\widetilde{B}=\mathrm{Id}$. Then the products of non-resonant conjugacies converge in $C^k$ to a conjugacy $B=e^Y$. Moreover, by \eqref{estb1}, $\|\bar{A}\| \leqslant 2\|A\|$. Hence the canonical elliptic logarithm of $\bar{A}$ satisfies $|\nu| \lesssim_{\|A\|} 1$. All the remaining estimates follow directly from the non-resonant estimates. 

Assume that there are $m \in \mathbb{N}^+$ resonant steps, we let $j_m$ be the last resonant step and denote by $n^*=n^*_{l_{j_m}}$ the resonant site for short. Next, we will first consider the general case where $m \geqslant 2$. 

Since $j_m>1$, by Theorem~\ref{Ckalmost}\ref{item:comega}, we have
\begin{equation*}
	B_{l_{j_m-1}}^{-1}(x+\alpha) A e^{f_{l_{j_m-1}}(x)} B_{l_{j_m-1}}(x) = A_{l_{j_m-1}} e^{f'_{l_{j_m-1}}(x)}. 
\end{equation*}
Let $A_{l_{j_m-1}} e^{\tilde{f}_{l_{j_m-1}}(x)} \coloneq A_{l_{j_m-1}} e^{f_{l_{j_m-1}}'(x)}B^{-1}_{l_{j_m-1}}(x)e^{-f_{l_{j_m-1}}(x)}e^{f_{l_{j_m}}(x)}B_{l_{j_m-1}}(x)$. Then
\begin{equation*}
     B^{-1}_{l_{j_m-1}}(x+\alpha) A e^{f_{l_{j_m}}(x)} B_{l_{j_m-1}}(x)
     = A_{l_{j_m-1}} e^{\tilde{f}_{l_{j_m-1}}(x)}.
\end{equation*}
    Lemma~\ref{adjacentres} excludes two consecutive resonant steps, hence the $(j_{m}-1)$-th step is non-resonant. By Theorem \ref{Ckalmost}\ref{item:comega} and \eqref{estb1},
\begin{equation*}
  \begin{split}
     |\tilde{f}_{l_{j_m-1}}|_{r_{{j_m}}}
     & \leqslant 2(|f_{l_{j_m-1}}'|_{r_{{j_m}}}+|B^{-1}_{l_{j_m-1}}|_{r_{j_m}}|f_{l_{{j_m}}}-f_{l_{j_m-1}}|_{r_{j_m}}|B_{l_{j_m-1}}|_{r_{j_m}}) \\
     & \leqslant 2(\epsilon_{l_{j_m-1}}^{2-\frac{7}{2}\sigma}+\epsilon_{l_{j_m-1}}^{-\sigma} 2\epsilon_{l_{1}}l_{j_m-1}^{-(k'-1)}) \\
     & \leqslant 4\epsilon_{l_{j_m-1}}^{\min(2-\frac{7}{2}\sigma,\frac{k'-1}{D\tau}-\sigma)} \leqslant \epsilon_{l_{j_m}}. 
  \end{split}
\end{equation*}
Now we apply Theorem \ref{KAM} to the cocycle $(\alpha,A_{l_{j_m-1}}e^{\tilde{f}_{l_{j_m-1}}})$, there exist $\widetilde{B}_{l_{j_m-1}} \in C^\omega_{r_{j_m+1}}(2\mathbb{T}^d,\mathrm{SL}(2,\mathbb{R}))$, $A_{l_{j_m}} \in \mathrm{SL}(2,\mathbb{R})$ and $f_{l_{j_m}}' \in C^\omega_{r_{j_m+1}}(\mathbb{T}^d,\mathrm{sl}(2,\mathbb{R}))$ such that 
\begin{equation*}
	\widetilde{B}_{l_{j_m-1}}^{-1}(x+\alpha) A_{l_{j_m-1}}e^{\tilde{f}_{l_{j_m-1}}(x)} \widetilde{B}_{l_{j_m-1}}(x)=A_{l_{j_m}} e^{f'_{l_{j_m}}(x)},  
\end{equation*}
which gives 
\begin{equation*}
	B_{l_{j_m}}^{-1}(x+\alpha) Ae^{f_{l_{j_m}}(x)} B_{l_{j_m}}(x)=A_{l_{j_m}} e^{f'_{l_{j_m}}(x)},  
\end{equation*}
where 
\begin{equation*}
    B_{l_{j_m}}=B_{l_{j_m-1}} \cdot \widetilde{B}_{l_{j_m-1}} \in C^{\omega}_{r_{j_m+1}}(2\mathbb{T}^d, \mathrm{SL}(2,\mathbb{R})). 
\end{equation*} Since $j_m$-th step is the last resonant step, then Theorem \ref{KAM}(B) implies that there exist $P_{l_{j_m}}\in \mathrm{SL}(2,\mathbb{R})$, $Y_{l_{j_m}}\in C^{\omega}_{r_{j_m+1}}(\mathbb{T}^{d}, \mathrm{sl}(2,\mathbb{R}))$, $0<|n^{*}|\leqslant N_{l_{j_m}}$ with 
\begin{equation}\label{PY}
	\begin{split}
        \|P_{l_{j_m}}\| \leqslant 4\|A_{l_{j_m-1}}\|^{1/2}{\kappa}^{-1/2}|n^*|^{\tau/2}, \quad |Y_{l_{j_m}}|_{r_{j_m+1}} \leqslant 4\|A_{l_{j_m-1}}\|^{1/2}{\kappa}^{-1/2}|n^*|^{\tau/2} \epsilon_{l_{j_m}}^{1/2}, 
	\end{split} 
\end{equation}
such that 
\begin{equation*}
    \widetilde{B}_{l_{j_{m}-1}}(x)=P_{l_{j_{m}}}^{-1} \cdot e^{Y_{l_{j_{m}}}(x)}\cdot R_{\langle n^*,x \rangle/2}
\end{equation*} 
with $\deg \widetilde{B}_{l_{j_m-1}}=n^{*}$ and $|f_{l_{j_{m}}}'|_{r_{j_{m}+1}} \leqslant \epsilon_{l_{j_{m}}}^{100}$.  
Thus by \eqref{estb1}, we have 
\begin{equation*}
    \begin{split}
        |\tilde{f}_{l_{j_{m}}}|_{r_{j_{m}+1}} & \leqslant 2(|f_{l_{j_m}}'|_{r_{{j_m}+1}}+|B^{-1}_{l_{j_m}}|_{r_{{j_m}+1}}|f_{l_{{j_m}+1}}-f_{l_{j_m}}|_{r_{{j_m}+1}}|B_{l_{j_m}}|_{r_{{j_m}+1}}) \\
        & \lesssim \epsilon_{l_{j_{m}}}^{100}+\epsilon_{l_{j_m}}^{-\sigma} 2\epsilon_{l_{1}}l_{j_m}^{-(k'-1)} \\
        & \lesssim l_{j_{m}}^{-(k'-1-\sigma D\tau)} <\epsilon_{l_{j_{m}+1}}
    \end{split}
\end{equation*}

By Theorem \ref{KAM}(A) and $D>\frac{18}{\sigma^2}$,
\begin{equation}\label{nrej0}
    |\widetilde{B}_{l_{j_{m}}}-\mathrm{Id}|_{r_{j_{m}+2}} \leqslant \epsilon_{l_{j_{m}+1}}^{-2\sigma-\sigma^2/6} |\tilde{f}_{l_{j_{m}}}|_{r_{j_{m}+1}} \lesssim \epsilon_{l_{j_m}}^{\frac{k'-1-\sigma D\tau}{D\tau}-(2\sigma+\sigma^2/6)(1+s)}, 
\end{equation}
we let $Y \in C^{k}(\mathbb{T}^{d}, \mathrm{sl}(2,\mathbb{R}))$ be such that 
\begin{equation}\label{expY}
    e^{Y}=\prod_{j=j_{m}}^{\infty}\widetilde{B}_{l_{j}} \in C^{k}(\mathbb{T}^{d},\mathrm{SL}(2,\mathbb{R})).
\end{equation}
Furthermore, we denote $\widetilde{B}(x)=B_{l_{j_{m}-1}}(x) P_{l_{j_{m}}}^{-1} e^{Y_{l_{j_{m}}}(x)}$ and
\begin{equation*}
    B(x)=\widetilde{B}(x) \cdot R_{\langle n^*,x \rangle/2} \cdot e^{Y(x)}. 
\end{equation*}

Since all the steps after $j_m$ are non-resonant, let $\bar{A}=\lim_{j \rightarrow \infty}A_{l_j} \in \mathrm{SL}(2,\mathbb{R})$, we obtain 
\begin{equation*}
    B^{-1}(x+\alpha)Ae^{f(x)}B(x)=\bar{A}=M^{-1}\exp\begin{pmatrix} 
                it & \nu \\ 
                \bar{\nu} & -it 
            \end{pmatrix}M.
\end{equation*}

In the following, we will evaluate the quantitative estimates for $B$ and $\bar{A}$ via the relation of $\gamma$ and $l_{j_{m}}$. Let $\delta=\frac{\sigma}{100k'D\tau}$. For every resonant index $j \in \{j_{1},\cdots,j_{m}\}$, we have
\begin{equation}\label{Nlj0}
	1+|n^{*}_{l_{j}}| \leqslant 8l_j |\ln \epsilon_{l_j}| \leqslant l_{j}^{1+\delta}. 
\end{equation}
Since $j_{m}$ is the last resonant step, that is
\begin{equation*}
    \|2\rho(A_{l_{j_{m}-1}})-\langle n^{*},\alpha\rangle\|_{\mathbb{R}/\mathbb{Z}} <\epsilon_{l_{j_{m}}}^{\sigma},
\end{equation*}
combining the above inequality with \eqref{con} implies that
\begin{equation}\label{lastle}
    \begin{split}
        &\quad \ \|2\rho(\alpha, A_{l_{j_{m}-1}} e^{\overline{f}_{l_{j_{m}-1}}})-\langle n^{*}, \alpha \rangle \|_{\mathbb{R}/\mathbb{Z}} \\
        &\leqslant \|2\rho(A_{l_{j_{m}-1}})-\langle n^{*},\alpha\rangle\|_{\mathbb{R}/\mathbb{Z}}+\| 2\rho(\alpha, A_{l_{j_{m}-1}} e^{\overline{f}_{l_{j_{m}-1}}}) - 2\rho(A_{l_{j_{m}-1}})\|_{\mathbb{R}/\mathbb{Z}} \\
        &\leqslant \epsilon_{l_{j_{m}}}^{\sigma}+2\tilde{c}\epsilon_{l_{j_{m}}}^{1/2} \leqslant 2\epsilon_{l_{j_{m}}}^{\sigma}.
    \end{split}
\end{equation}
On the other hand, \eqref{rotdc} shows 
\begin{equation}\label{lastge}
    \|2\rho(\alpha, A_{l_{j_{m}-1}} e^{\overline{f}_{l_{j_{m}-1}}}) - \langle n^{*}, \alpha \rangle\|_{\mathbb{R}/\mathbb{Z}} \geqslant \frac{\gamma}{(N_{l_{j_{m}}}+|\deg B_{l_{j_{m}-1}}|+1)^{2\tau}}.
\end{equation}
It follows from \eqref{lastle} and \eqref{lastge} that
\begin{equation}\label{gamma}
    \gamma \leqslant 2\epsilon_{l_{j_{m}}}^{\sigma} (N_{l_{j_{m}}}+ 8l_{j_{m}-1} |\ln \epsilon_{l_{j_{m}-1}}|+1)^{2\tau} \leqslant 2^{2\tau+1}\epsilon_{l_{j_{m}}}^{\sigma}N_{l_{j_{m}}}^{2\tau} \leqslant l_{j_{m}}^{-(\sigma D-2-2\delta)\tau},
\end{equation}
which implies
\begin{equation}\label{lj0}
    l_{j_m} \leqslant \gamma^{-\frac{1}{(\sigma D-2-2\delta)\tau}}.
\end{equation}
Therefore, it follows from \eqref{Nlj0}, \eqref{lj0} and $D>\frac{2(k'-1)}{(4-5\sigma)\tau}$ that 
\begin{equation}\label{lastres}
    |n^{*}|\leqslant N_{l_{j_{m}}} \leqslant l_{j_{m}}^{1+\delta} \leqslant {\gamma}^{-\frac{1+\delta}{(\sigma D-2-2\delta)\tau}} \leqslant \gamma^{-\frac{2}{\sigma(k'-1)}}.
\end{equation}

By \eqref{expY}, \eqref{norm}, and \eqref{nrej0}, we have  
\begin{equation*}
    \|Y\|_{k} \leqslant 2\sum_{j = j_{m}}^{\infty} \|\widetilde{B}_{l_{j}} - \mathrm{Id}\|_{k} 
    \lesssim 4k! \, (l_{j_{m}+2})^{k} \, \epsilon_{l_{j_m}}^{\frac{k'-1-\sigma D\tau}{D\tau}-(2\sigma+\sigma^2/6)(1+s)}.
\end{equation*}
Using $k\leqslant \frac{k'-8\sigma D\tau-4}{4(1+s)^2}$ and \eqref{Nlj0}, it follows that 
\begin{equation*}
    \begin{split}
        \|Y\|_{k} & \lesssim l_{j_{m}}^{(1+s)^2k-(k'-1-\sigma D\tau)+(2+\sigma/6)(1+s)\sigma D\tau} \lesssim l_{j_m}^{-\frac{3k'}{4}+\frac{6+\sigma+\sigma s+12s}{6}\sigma D\tau} \\ 
        & \lesssim (1 + |n^{*}|)^{-\frac{9k'-(12+2\sigma+2\sigma s+24s)\sigma D\tau}{12(1+\delta)}} 
        \leqslant (1+|n^{*}|)^{-\frac{k'}{2}}.
    \end{split}
\end{equation*}

By \eqref{PY} and \eqref{lastres}, we obtain 
\begin{equation}\label{P}
    \|P_{l_{j_{m}}}\| \lesssim |n^*|^{\frac{\tau}{2}} \lesssim \gamma^{-\frac{1+\delta}{2(\sigma D-2-2\delta)}}.
\end{equation} 

Let $j_{i}$ denote the resonant steps, with $n^{*}_{l_{j_i}}$ the associated resonant site ($2 \leqslant i \leqslant m$). On one hand, the resonance condition gives 
\begin{equation*}
    \|2\rho(A_{l_{j_{i}-1}})-\langle n^{*}_{l_{j_i}}, \alpha \rangle\|_{\mathbb{R}/\mathbb{Z}}<\epsilon_{l_{j_i}}^\sigma. 
\end{equation*}
Together with the Diophantine condition
\begin{equation*}
    \|\langle n^{*}_{l_{j_i}}, \alpha \rangle\|_{\mathbb{R}/\mathbb{Z}}>\kappa|n_{l_{j_i}}^{*}|^{-\tau},
\end{equation*}
we deduce that
\begin{equation*}
    \|2\rho(A_{l_{j_i-1}})\|_{\mathbb{R}/\mathbb{Z}}>\frac{2\kappa}{3} |n^{*}_{l_{j_i}}|^{-\tau}.
\end{equation*}
On the other hand, by Theorem~\ref{Ckalmost}\ref{item:re} we have $\|2\rho(A_{l_{j_{i-1}}})\|_{\mathbb{R}/\mathbb{Z}} \leqslant 4\epsilon_{l_{j_{i-1}}}^\sigma$, and Theorem~\ref{Ckalmost}\ref{item:nre} further yields  
\begin{equation*}
    \|2\rho(A_{l_{j_i-1}})\|_{\mathbb{R}/\mathbb{Z}} \leqslant 8\epsilon_{l_{j_{i-1}}}^\sigma \leqslant 8(\kappa|n_{l_{j_{i-1}}}^{*}|^{-\tau})^{18}.
\end{equation*}
Thus,  
\begin{equation*} 
    \quad |n_{l_{j_{i}}}^{*}|^{-\tau}<12\kappa^{17}|n_{l_{j_{i-1}}}^{*}|^{-18\tau} \leqslant |n_{l_{j_{i-1}}}^{*}|^{-17\tau},
\end{equation*}
Choose $0<\lambda<\min\{\frac{\sigma^2}{18}-D^{-1},D^{-1}\}$ in Lemma~\ref{adjacentres} and above inequalities imply  
\begin{equation}\label{lji}
    l_{j_{i-1}} \leqslant l_{j_i}^{\frac{1+\lambda D}{\sigma D}}, \quad 
    |n_{l_{j_{i-1}}}^{*}| \leqslant |n_{l_{j_{i}}}^{*}|^{\frac{1}{17}}.
\end{equation}

From Theorem~\ref{Ckalmost}\ref{item:comega} and \eqref{norm},  
\begin{equation*}
    \|B_{l_{j_{m-1}}}\|_0 \, \|B_{l_{j_{m-1}}}\|_{k} \leqslant \epsilon_{l_{j_{m-1}}}^{-\sigma/2} k! \,(l_{j_{m-1}+1})^{k} |B_{l_{j_{m-1}}}|_{r_{j_{m-1}+1}} \leqslant k! \,(l_{j_{m-1}+1})^{k}\epsilon_{l_{j_{m-1}}}^{-\sigma}.
\end{equation*}
Using $k \leqslant \frac{k'-8\sigma D\tau-4}{4(1+s)^2}$, and \eqref{lj0}, \eqref{lji}, we get 
\begin{equation*}
    \begin{split}
        \|B_{l_{j_{m-1}}}\|_0 \, \|B_{l_{j_{m-1}}}\|_{k} 
        & \lesssim l_{j_{m-1}}^{(1+s)k+\sigma D\tau}  
        \lesssim l_{j_{m}}^{\frac{(1+s)k+\sigma D\tau}{\sigma D}(1+\lambda D)} \\ 
        & \lesssim \gamma^{-\frac{(1+s)k+\sigma D\tau}{\sigma D(\sigma D-2-2\delta)\tau}(1+\lambda D)} \leqslant \gamma^{-\frac{k'-4\sigma D\tau-4}{2\sigma D(\sigma D-2-2\delta)\tau}}, 
    \end{split}
\end{equation*}
and 
\begin{equation*}
    \|Y_{l_{j_m}}\|_{k} \lesssim l_{j_{m}+1}^{k}|n^*|^{\tau/2}\epsilon_{l_{j_m}}^{1/2} \lesssim l_{j_{m}}^{(1+s)k+\frac{(1+\delta)\tau}{2}-\frac{D\tau}{2}} .
\end{equation*}
Here, we want $\|e^{Y_{l_{j_m}}}\|_{k}$ not to have any impact on the order of estimate of $\|\widetilde{B}\|_0 \, \|\widetilde{B}\|_{k}$, thus we need to choose $k \leqslant \frac{k'-8\sigma D\tau-4}{4(1+s)^2}$. 

Therefore, by Theorem~\ref{Ckalmost}\ref{item:nre}, \eqref{P}, and $D>\frac{2(k'-1)}{(4-5\sigma)\tau}$, we have 
\begin{equation*}
    \begin{split}
        \|\widetilde{B}\|_0 \, \|\widetilde{B}\|_{k} 
        & \leqslant 2\|B_{l_{j_{m-1}}}\|_0 \, \|B_{l_{j_{m-1}}}\|_{k} \|P_{l_{j_{m}}}\|^2 \|e^{Y_{l_{j_m}}}\|_{k} \\ 
        & \lesssim \gamma^{-\frac{k'-4\sigma D\tau-4}{2\sigma D(\sigma D-2-2\delta)\tau}-\frac{1+\delta}{\sigma D-2-2\delta}} 
        \leqslant \gamma^{-\frac{2\tau k'}{\sigma^2(k'-1)^2}}. 
    \end{split}
\end{equation*}

Moreover, by Theorem~\ref{Ckalmost}\ref{item:nre}\ref{item:re} together with \eqref{deg1}, \eqref{estb1}, \eqref{Nlj0}, \eqref{lji}, and $D>\frac{2(k'-1)}{(4-5\sigma)\tau}$, we obtain 
\begin{equation*}
    \begin{split}
        |\deg\widetilde{B}|=|\deg B_{l_{j_{m-1}}}| & \leqslant 8\,l_{j_{m-1}} |\ln \epsilon_{l_{j_{m-1}}}| 
        \leqslant l_{j_{m-1}}^{1+\delta} 
        \leqslant l_{j_{m}}^{\frac{(1+\delta)(1+\lambda D)}{\sigma D}} \\ 
        & \leqslant \gamma^{-\frac{(1+\delta)(1+\lambda D)}{\sigma D(\sigma D-2-2\delta)\tau}} \leqslant \gamma^{-\frac{10\tau}{\sigma^2(k'-1)^2}}.
    \end{split}
\end{equation*}

Since $\rho(\alpha, Ae^f) \in \Theta_\gamma^{2\tau}$, Theorem~\ref{Ckalmost}\ref{item:nre}, Theorem~\ref{Ckalmost}\ref{item:re} and \eqref{lji} give 
\begin{equation*}
    \begin{split}
        \|2\rho(\bar{A})\|_{\mathbb{R}/\mathbb{Z}} & =\|2\rho(\alpha,Ae^f)-\langle \deg B, \alpha \rangle\|_{\mathbb{R}/\mathbb{Z}} \\
        & \geqslant \frac{\gamma}{(|\deg B|+1)^{2\tau}} \geqslant \frac{\gamma}{\big(\sum_{i=1}^{m} |n_{l_{j_{i}}}^{*}|+1\big)^{2\tau}} \\
        & \geqslant \frac{\gamma}{(2|n^{*}|+1)^{2\tau}} \geqslant \frac{\gamma}{4^\tau(|n^{*}|+1)^{2\tau}}.
    \end{split}
\end{equation*}

Finally, from Theorem~\ref{Ckalmost}\ref{item:nre}\ref{item:re} we get  
\begin{equation*}
    \begin{split}
        |\nu| & \leqslant |\nu_{j_{m}}|+2\sum_{j=j_{m}}^{\infty} \|A_{l_{j+1}}-A_{l_j}\| \\ 
        & \leqslant 64\|A_{l_{j_{m}-1}}\|\kappa^{-1} |n^*|^\tau \epsilon_{l_{j_{m}}} e^{-2\pi|n^*| r_{j_{m}}}+4\sum_{j=j_{m}}^{\infty} \|A_{l_j}\| \epsilon_{l_{j+1}} \\
        & \lesssim (1+|n^*|)^\tau \epsilon_{l_{j_{m}}}.
    \end{split}
\end{equation*}
By \eqref{Nlj0}, and $D>\frac{2(k'-1)}{(4-5\sigma)\tau}$, 
\begin{equation*}
    |\nu| \lesssim (1+|n^*|)^\tau l_{j_{m}}^{-D\tau} \lesssim (1+|n^{*}|)^{-\frac{D\tau}{1+\delta}+\tau} 
    \leqslant (1+|n^{*}|)^{-(\frac{k'}{2}+2\tau)}.
\end{equation*}

Finally, if $m=1$, the conclusion follows from the same argument with $j_{m}$ replaced by $j_{1}$, where, when $j_{1}=1$, we use directly the resonant estimates for the first step in the proof of Theorem~\ref{Ckalmost} and apply \eqref{con} directly to $(\alpha,Ae^{f})$ and $(\alpha,A)$. 
\end{proof}

One can get the following corollary for the $C^k$-reducibility of the quasi-periodic Schr\"odinger cocycle as well as arithmetic PSL and PDL of dual long-range operator by Theorem \ref{Ckreducibility}, Theorem \ref{APSL} and Theorem \ref{pdl}. 

\begin{Corollary}\label{cor3.1}
	Let $\alpha \in \mathrm{DC}_d(\kappa,\tau)$, $0<\sigma<1/9$, $s \in (0,1-\frac{7}{2}\sigma)$ and $\mathfrak{r} \geqslant \tau$. Suppose that $V \in C^{k'}(\mathbb{T}^d, \mathbb{R})$, where $k'>\frac{18}{\sigma^2}(1+s+\sigma)\mathfrak{r}+1$. Let $D>0$ be a constant satisfying 
    \begin{equation*}
        \max\bigg\{\frac{2(k'-1)}{(4-5\sigma)\mathfrak{r}},\frac{18}{\sigma^2}\bigg\}<D<\min\bigg\{\frac{k'-1}{(1+\sigma+s)\mathfrak{r}},\frac{k'-16d-20}{8\sigma\mathfrak{r}}\bigg\}.
    \end{equation*}  
    Then for any 
    \begin{equation*}
        \max\left\{d,\frac{8\mathfrak{r}^2 k'}{\sigma^2(k'-1)^2-8\sigma\mathfrak{r}(k'-1)-40\mathfrak{r}^2}\right\}<k \leqslant \frac{k'-8\sigma D\mathfrak{r}-4}{4(1+s)^2}, \ k \in \mathbb{N}, 
    \end{equation*}
    there exists $\epsilon^*_3=\epsilon^*_3(\kappa,\mathfrak{r},d,k',\sigma,s,D)$ such that if $\|V\|_{k'} \leqslant \epsilon^*_3$, then for all $E \in \Sigma_{V,\alpha}$ satisfying $\rho(E) \in \Theta^{2\mathfrak{r}}$, the Schr\"odinger cocycle $(\alpha,S_E^V)$ is $C^{k}$-reducible, i.e. 
	\begin{equation}\label{conj2}
		B^{-1}(x+\alpha)S_E^V(x)B(x)=\bar{A}, 
		\end{equation}
	with the estimates of $B(x)$ and $\bar{A}$ hold as in Theorem \ref{Ckreducibility}, with $\tau$ replaced by $\mathfrak{r}$. Moreover, the long-range operator $L_{V,\alpha,\theta}$ has arithmetic $k$-PSL for every $\theta \in \Theta^\mathfrak{r}$ and the family $\{L_{V, \alpha, \theta}\}_{\theta \in \mathbb{T}}$ has arithmetic $k$-PDL. 
\end{Corollary}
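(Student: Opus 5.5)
The plan is to reduce Corollary~\ref{cor3.1} to Theorem~\ref{Ckreducibility} and then feed the resulting constants into Theorem~\ref{goodeig}, Theorem~\ref{APSL} and Theorem~\ref{pdl}. All of this is done with the Diophantine exponent $\tau$ replaced by $\mathfrak{r}\geqslant\tau$. This replacement is harmless because $\mathrm{DC}_d(\kappa,\tau)\subseteq\mathrm{DC}_d(\kappa,\mathfrak{r})$.

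\textbf{Step 1: reducibility.} Write $S_E^V(x)=A_E e^{f_E(x)}$ with
\begin{equation*}
A_E=\begin{pmatrix} E & -1\\ 1 & 0\end{pmatrix}, \qquad f_E(x)=\begin{pmatrix}0&0\\ -V(x)&0\end{pmatrix},
\end{equation*}
up to the exact form of the logarithm, which is nilpotent here. Then $\|f_E\|_{k'}\lesssim\|V\|_{k'}$. Since $\|V\|_0$ is small, $\Sigma_{V,\alpha}\subseteq[-3,3]$, so $\|A_E\|\leqslant 5$ uniformly in $E$. Consequently the smallness threshold $\epsilon_2^*$ of Theorem~\ref{Ckreducibility} can be taken uniform in $E$, and we set $\epsilon_3^*$ accordingly.

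For $E\in\Sigma_{V,\alpha}$ with $\rho(E)\in\Theta^{2\mathfrak{r}}_\gamma$, Theorem~\ref{Ckreducibility} (with $\tau\to\mathfrak{r}$) gives \eqref{red}--\eqref{est} with the following constants:
\begin{equation*}
c_1=\tfrac{2}{\sigma(k'-1)},\quad c_2=\tfrac{k'}{2},\quad c_3=\tfrac{2\mathfrak{r}k'}{\sigma^2(k'-1)^2},\quad c_4=\tfrac{10\mathfrak{r}}{\sigma^2(k'-1)^2},\quad c_5=\tfrac{k'}{2}+2\mathfrak{r}.
\end{equation*}
The last line of \eqref{est}, with $2\mathfrak{r}$ in place of $\tau$, follows from $\|2\rho(\bar A)\|\geqslant\gamma 4^{-\mathfrak{r}}(1+|n^*|)^{-2\mathfrak{r}}$ together with Remark~\ref{normalformconvention}, since $16^{2\mathfrak r}\geqslant 2\cdot4^{\mathfrak r}$.

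\textbf{Step 2: checking the numerical conditions.} We need \eqref{cond} with $2\mathfrak{r}$, namely $c_5>2\mathfrak{r}$ (immediate) and
\begin{equation*}
\frac{k}{\min\{c_2,c_5-2\mathfrak{r}\}}+2(c_3+c_4k)<1 .
\end{equation*}
Here $\min\{c_2,c_5-2\mathfrak r\}=k'/2$, so the condition reads $\frac{2k}{k'}+\frac{4\mathfrak{r}k'+20\mathfrak{r}k}{\sigma^2(k'-1)^2}<1$. Since $k\leqslant k'/4$, the first term is at most $1/2$, so it suffices that the second term is below $1/2$. This is where the lower bound on $k$ in the corollary should enter. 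I expect that solving for $k$ (or combining with the upper bound on $k$ and the constraint $D<\frac{k'-16d-20}{8\sigma\mathfrak r}$) reproduces exactly the stated threshold $\frac{8\mathfrak r^2k'}{\sigma^2(k'-1)^2-8\sigma\mathfrak r(k'-1)-40\mathfrak r^2}$.

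We also need \eqref{fulluniformcond}, i.e. $2\mathfrak{r}(c_3+c_4k+c_1k)<k/2$. Dividing by $k$, this becomes
\begin{equation*}
\frac{4\mathfrak r k'}{\sigma^2(k'-1)^2 k}+\frac{40\mathfrak r^2}{\sigma^2(k'-1)^2}+\frac{8\mathfrak r}{\sigma(k'-1)}<1,
\end{equation*}
which after rearranging is precisely $k>\frac{8\mathfrak r^2k'}{\sigma^2(k'-1)^2-8\sigma\mathfrak r(k'-1)-40\mathfrak r^2}$ (up to the bookkeeping factor of $2\mathfrak r$). So the lower bound on $k$ is designed for \eqref{fulluniformcond}. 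The condition $k>d$ is assumed, and the admissible $k$-range is nonempty because the upper constraint on $D$ involves $16d$.

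\textbf{Step 3: conclusion.} With the hypotheses verified, Theorem~\ref{APSL} gives arithmetic $k$-PSL for $\theta\in\Theta^{\mathfrak r}$, and Theorem~\ref{pdl} gives arithmetic $k$-PDL.

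The main obstacle is Step 2: the exact algebra showing that the stated window for $k$ and $D$ implies both \eqref{cond} and \eqref{fulluniformcond}, with consistent use of $\mathfrak r$ versus $2\mathfrak r$. I would first verify \eqref{fulluniformcond} as above, then bound the second term of \eqref{cond} using $k\leqslant k'/4$ and $k'\gg\mathfrak r/\sigma^2$. A secondary point is confirming that the uniformity in $E$ of $\|A_E\|$ makes $\epsilon_3^*$ independent of $E$.
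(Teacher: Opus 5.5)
Your proposal is correct and follows the paper's proof. You write $S_E^V=A_Ee^{f}$ with $\|A_E\|$ bounded uniformly on $\Sigma_{V,\alpha}$, apply Theorem~\ref{Ckreducibility} with $\tau$ replaced by $\mathfrak r$, read off $c_1,\dots,c_5$, and check \eqref{cond} (with $2\mathfrak r$) and \eqref{fulluniformcond} before invoking Theorems~\ref{APSL} and~\ref{pdl}; you actually supply more of the arithmetic than the paper, which only asserts these inequalities. A few small fixes:
\begin{itemize}
\item The lower-left entry of $f$ should be $+V$, so that $A_Ee^{f}=S_E^V$.
\item The first term of your rearranged \eqref{fulluniformcond} should be $\frac{8\mathfrak r^2k'}{\sigma^2(k'-1)^2k}$. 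With that correction, the stated lower bound on $k$ is exactly \eqref{fulluniformcond}.
\item \eqref{cond} needs no lower bound on $k$. It follows, as you planned, from $k<k'/4$ and $\sigma^2(k'-1)>18(1+s+\sigma)\mathfrak r$.
\end{itemize}
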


\begin{proof}
	Let $A_E=\begin{pmatrix} 
		    E & -1 \\ 
		    1 & 0 
	    \end{pmatrix}$
	and $f(x)=\begin{pmatrix} 
			0 & 0 \\ 
			V(x) & 0 
		\end{pmatrix}$, 
	thus
	\begin{equation*}
		A_Ee^{f(x)}=S_E^V(x) \in C^{k'}(\mathbb{T}^d,\mathrm{SL}(2,\mathbb{R})).
	\end{equation*}
	Since $\mathfrak{r} \geqslant \tau$, one has $\alpha \in {\rm DC}_d(\kappa,\mathfrak{r})$. Moreover, after possibly decreasing $\epsilon_3^*$, we may assume $\|V\|_0 \leqslant 1$. Then, for every $E \in \Sigma_{V,\alpha}$,
    \begin{equation*}
        |E| \leqslant 2+\|V\|_0 \leqslant 3.
    \end{equation*}
    Thus $\|A_E\|$ is bounded uniformly for all $E \in \Sigma_{V,\alpha}$. Choose $\epsilon_3^*=\epsilon_{3}^{*}(\kappa,\mathfrak{r},d,k',\sigma,s,D)$ sufficiently small, so that the smallness condition in Theorem~\ref{Ckreducibility}, with $\tau$ replaced by $\mathfrak{r}$, holds uniformly for all such matrices $A_E$.
        
    Fix $\gamma \in (0,1)$ and $E \in \Sigma_{V,\alpha}$ with $\rho(E) \in \Theta_{\gamma}^{2\mathfrak{r}}$. Applying Theorem~\ref{Ckreducibility} with $\tau$ replaced by $\mathfrak{r}$, $A=A_E$, and this $\gamma$, we obtain \eqref{conj2} together with the corresponding estimates, uniformly for all such $E$. Since $\mathcal{E}^{2\mathfrak{r}}=\bigcup_{\gamma \in (0,1)}\mathcal{E}_{\gamma}^{2\mathfrak{r}}$, the reducibility statement follows.
        
    It remains to verify that the hypotheses of Theorem~\ref{APSL} and Theorem~\ref{pdl} hold with $\tau$ replaced by $\mathfrak{r}$. By direct calculation, if $k'>\frac{18}{\sigma^2}(1+s+\sigma)\mathfrak{r}+1$ and $D<\frac{k'-16d-20}{8\sigma\mathfrak{r}}$, it obviously that  
    \begin{equation*}
        \frac{k'-8\sigma D\mathfrak{r}-4}{4(1+s)^2}>d+1.  
    \end{equation*}
    By Theorem~\ref{Ckreducibility}, the constants in \eqref{est} can be chosen as  
    \begin{equation*}
        c_1=\frac{2}{\sigma(k'-1)}, \ c_2=\frac{k'}{2}, \ c_3=\frac{2\mathfrak{r} k'}{\sigma^2(k'-1)^2}, \ c_4=\frac{10\mathfrak{r}}{\sigma^2(k'-1)^2}, \ c_5=\frac{k'}{2}+2\mathfrak{r}.  
        \end{equation*} 
    By the choice of $k$, 
    \begin{equation*}
        2\mathfrak{r}(c_3+c_4k+c_1k)<k/2, 
    \end{equation*}
    moreover,  
    \begin{equation*}
        c_5>2\mathfrak{r}, \quad \frac{k}{\min\{c_2,c_5-2\mathfrak{r}\}}+2(c_3+c_4k)<1. 
    \end{equation*}
    Then, Theorem~\ref{APSL} and Theorem~\ref{pdl}, with $\tau$ replaced by $\mathfrak r$, imply that the long-range operator $L_{V,\alpha,\theta}$ has arithmetic $k$-PSL for every $\theta \in \Theta^\mathfrak{r}$, and the family $\{L_{V,\alpha,\theta}\}_{\theta \in \mathbb{T}}$ has arithmetic $k$-PDL. 
        
	This finishes the proof of Corollary \ref{cor3.1}. 
\end{proof}

\section{Acknowledgements}
Ao Cai was supported by NSFC grant 12671227, 12271091 and JSTJ-2025-674. 
Yuan Shan was supported by the National Natural Science Foundation of China grant No. 12671214. 
Huihui Lv was supported by Postgraduate Research \& Practice Innovation Program of Jiangsu Province grant 26CXJH5827.

\appendix	
\section{Proof of Lemma \ref{continuous}}
\label{appendixA}
\begin{proof}
Fix $M>0$. Since $\theta,\theta' \in \Theta_{\bar{\gamma}}^{2\tau}$, for any
$|m| \leqslant M$ and any $l \in \mathbb{Z}^d$, one has
\begin{equation*}
    \begin{split}
        \|2T^m\theta-\langle l,\alpha\rangle\|_{\mathbb{R}/\mathbb{Z}}
        &=\|2\theta-\langle 2m+l,\alpha\rangle\|_{\mathbb{R}/\mathbb{Z}} \\
        & \geqslant \frac{\bar{\gamma}}{(|2m+l|+1)^{2\tau}} \geqslant \frac{(2M+1)^{-2\tau}\bar{\gamma}}{(|l|+1)^{2\tau}}.
    \end{split}
\end{equation*}
Thus
\begin{equation}\label{TmDio}
    T^m\theta,\ T^m\theta' \in \Theta_{\gamma_1}^{2\tau}, \quad \gamma_1=(2M+1)^{-2\tau}\bar{\gamma}, \quad |m| \leqslant M.
\end{equation}

We first prove the following claim. If $\theta_j,\theta'_j \in \Theta_{\bar{\gamma}}^{2\tau}$ and $\|\theta_j-\theta'_j\|_{\mathbb{R}/\mathbb{Z}} \to 0$, then for every fixed $|m| \leqslant M$ and every fixed $l \in \mathbb{Z}^d$,
\begin{equation}\label{pointwisecontinuity}
    |u_{T^m\theta_j}(l)|^2-|u_{T^m\theta'_j}(l)|^2 \to 0.
\end{equation}

It suffices to show that every subsequence admits a further subsequence along which \eqref{pointwisecontinuity} holds. Let $\{j_i\}_{i \geqslant 1}$ be an arbitrary subsequence. Since $\mathbb{T}$ is compact, after passing to a further subsequence, still denoted by $\{j_i\}$, we may assume that $T^m\theta_{j_i} \to \phi$ for some $\phi \in \mathbb{T}$. Since $\|T^m\theta_{j_i}-T^m\theta'_{j_i}\|_{\mathbb{R}/\mathbb{Z}} \to 0$, one also has $T^m\theta'_{j_i} \to \phi$. By \eqref{TmDio}, $\phi \in \Theta_{\gamma_1}^{2\tau}$. 

We first consider $\phi \in (0,1/2)$. After passing to a further subsequence, $T^m\theta_{j_i}$ and $T^m\theta'_{j_i}$ lie in the same branch of \eqref{utheta}. By the continuity and monotonicity of the rotation number on the spectrum and by the definition of $E(\cdot)$,
\begin{equation}\label{energyconv}
    E(T^m\theta_{j_i}) \to E(\phi), \quad
    E(T^m\theta'_{j_i}) \to E(\phi).
\end{equation}

By the assumptions \eqref{red}--\eqref{est}, applied with the arithmetic constant $\gamma_1$, the conjugacies used to construct $u_E$ for all $E$ satisfying $\rho(E) \in \Theta_{\gamma_1}^{2\tau}$ can be chosen with uniform bounds depending only on $\alpha,V,d,k,\gamma_1$. More precisely, after the constant diagonalization step in the proof of Theorem~\ref{goodeig}, there exist $B_{E} \in C^k(2\mathbb{T}^d,\mathrm{SL}(2,\mathbb{C}))$, $A_E \in \mathrm{SL}(2,\mathbb{R})$ such that
\begin{equation}\label{uniformBE}
    B_E^{-1}(x+\alpha)S_E^V(x)B_E(x)=
    \begin{pmatrix}
        e^{2\pi i\rho(A_E)} & 0 \\
        0 & e^{-2\pi i\rho(A_E)}
    \end{pmatrix}
\end{equation}
and
\begin{equation}\label{uniformboundBE}
    \|B_E\|_k, |\deg B_E| \leqslant C_0(\alpha,V,d,k,\gamma_1).
\end{equation}

For simplicity, denote $E_{j_i}=E(T^m\theta_{j_i}), \ E'_{j_i}=E(T^m\theta'_{j_i})$. Choose $B_{E_{j_i}}$ and $B_{E'_{j_i}}$ satisfying
\eqref{uniformBE}\eqref{uniformboundBE}. By \eqref{uniformboundBE} and the compact embedding $C^k(2\mathbb{T}^d) \hookrightarrow C^{k-1}(2\mathbb{T}^d)$, after passing to a further subsequence, still denoted by $\{j_i\}$, we have
\begin{equation}\label{Bcompact}
    B_{E_{j_i}} \to B_*, \quad B_{E'_{j_i}} \to B'_*
\end{equation}
in $C^{k-1}$, hence in $C^0$. Moreover, by \eqref{uniformboundBE}, $\deg B_{E_{j_i}}$, $\deg B_{E'_{j_i}}$ take values in the finite subset of $\mathbb{Z}^d$. Thus, by the pigeonhole principle, after passing to a further subsequence, still denoted by $\{j_i\}$, for all $i$, $\deg B_{E_{j_i}}$, $\deg B_{E'_{j_i}}$ are constant respectively.

After passing to a further subsequence if necessary, we may also assume that $\rho(A_{E_{j_i}}) \to \rho_*$ and $\rho(A_{E'_{j_i}}) \to \rho'_*$. Passing to the limit in \eqref{uniformBE}, using \eqref{energyconv}, we obtain that $B_*$ and $B'_*$ are two reducibility conjugacies for the same cocycle $(\alpha,S_{E(\phi)}^V)$. Hence the normalized eigenfunctions constructed
from $B_*$ and $B'_*$ are eigenfunctions of
$L_{V,\alpha,\rho(E(\phi))}$ associated with the same energy $E(\phi)$.

Write
\begin{equation*}
    B_{E_{j_i}}(x)=
    \begin{pmatrix}
        b^{11}_{E_{j_i}}(x) & b^{12}_{E_{j_i}}(x) \\
        b^{21}_{E_{j_i}}(x) & b^{22}_{E_{j_i}}(x)
    \end{pmatrix},
\end{equation*}
and define $B_{E'_{j_i}}$ similarly. Let $z_{E_{j_i}}^{11}(x)=e^{-2\pi i\langle \deg B_{E_{j_i}},x\rangle/2} b_{E_{j_i}}^{11}(x)$ and define $z_{E'_{j_i}}^{11}$ similarly. By \eqref{Bcompact} and $\deg B_{E_{j_i}}$, $\deg B_{E'_{j_i}}$ are constants, for every fixed $l \in \mathbb{Z}^d$, we have 
\begin{equation}\label{coefconv}
    \hat{z}_{E_{j_i}}^{11}(l) \to \hat{z}_*^{11}(l), \quad \hat{z}_{E'_{j_i}}^{11}(l) \to \hat{z}_{*}^{\prime 11}(l),
\end{equation}
and
\begin{equation}\label{normconv}
    \|z_{E_{j_i}}^{11}\|_{L^2} \to \|z_*^{11}\|_{L^2}, \quad \|z_{E'_{j_i}}^{11}\|_{L^2}\to \|z_*^{\prime 11}\|_{L^2}.
\end{equation}
By Lemma~\ref{lem3.1} and \eqref{uniformboundBE}, the $L^2$ norms in \eqref{normconv} are uniformly bounded from below. Thus
\begin{equation*}
    u_{E_{j_i}}(l) \to u_*(l), \quad u_{E'_{j_i}}(l) \to u'_*(l),
\end{equation*}
where $u_*$ and $u'_*$ are the normalized eigenfunctions obtained from $B_*$ and $B'_*$ respectively. By Lemma~\ref{welldefine}, $u_*$ and $u'_*$ are equal up to a unimodular constant. Thus $|u_*(l)|^2=|u'_*(l)|^2$. 

If $\phi \in (1/2,1)$, by $u_{T^m\theta}(l)=\overline{u_{E(T^m\theta)}^{+}(-l)}$ for $T^m\theta$ in this branch, so the same conclusion follows from \eqref{uminus} and \eqref{utheta} by applying the preceding argument to the positive-branch eigenfunctions at the index $-l$.

Consequently, along this further subsequence, we obtain 
\begin{equation*}
    |u_{T^m\theta_{j_i}}(l)|^2-|u_{T^m\theta'_{j_i}}(l)|^2 \to 0.
\end{equation*}
Since the original subsequence $\{j_i\}$ was arbitrary, every subsequence admits a further subsequence along which the convergence in \eqref{pointwisecontinuity} holds. Therefore, \eqref{pointwisecontinuity} holds for the original sequence.

Now we prove the lemma. Suppose, for contradiction, that the conclusion is false. Then there exist $\varepsilon_0>0$ and sequences $\theta_j,\theta'_j \in \Theta_{\bar{\gamma}}^{2\tau}$ and $\|\theta_j-\theta'_j\|_{\mathbb{R}/\mathbb{Z}} \to 0$ such that
\begin{equation}\label{contcontra}
    \left|\mathcal{T}_M\nu_{\theta_j,\delta_n}(\mathcal{E}^{2\tau})-\mathcal{T}_M\nu_{\theta'_j,\delta_n}(\mathcal{E}^{2\tau})\right| \geqslant \varepsilon_0.
\end{equation}
By \eqref{TmDio}, for all $|m| \leqslant M$, one has $T^m\theta_j,\ T^m\theta'_j \in \Theta_{\gamma_1}^{2\tau} \subseteq \Theta^{2\tau}$. Thus, by Definition~\ref{RM} and \eqref{22},
\begin{equation*}
    \begin{split}
        \left|\mathcal{T}_M\nu_{\theta_j,\delta_n}(\mathcal{E}^{2\tau})-\mathcal{T}_M\nu_{\theta'_j,\delta_n}(\mathcal{E}^{2\tau})
        \right| & =\left|\sum_{|m| \leqslant M}|u_{m,\theta_j}(n)|^2-\sum_{|m| \leqslant M}|u_{m,\theta'_j}(n)|^2\right| \\ 
        & \leqslant \sum_{|m| \leqslant M}\left||u_{T^m\theta_j}(n+m)|^2-|u_{T^m\theta'_j}(n+m)|^2\right|
    \end{split} 
\end{equation*}
For each fixed $|m| \leqslant M$, applying \eqref{pointwisecontinuity} with $l=n+m$ gives
\begin{equation*}
    |u_{T^m\theta_j}(n+m)|^2-|u_{T^m\theta'_j}(n+m)|^2 \to 0.
\end{equation*}
Since the set $\{m \in \mathbb{Z}^d \colon |m| \leqslant  M\}$ is finite and all eigenfunctions are normalized, it follows that
\begin{equation*}
    \left|\mathcal{T}_M\nu_{\theta_j,\delta_n}(\mathcal{E}^{2\tau})-\mathcal{T}_M\nu_{\theta'_j,\delta_n}(\mathcal{E}^{2\tau})
    \right| \to 0,
\end{equation*}
which contradicts \eqref{contcontra}. This proves Lemma~\ref{continuous}.
\end{proof}

\section{Analytic almost reducibility}
\begin{theorem}[\cite{MR4512234}]\label{KAM}
     Let $\alpha \in \mathrm{DC}_{d}(\kappa,\tau)$, $\kappa, r \in (0,1)$, $\tau>d$, $\sigma \in (0,1/6)$, and $D>2/\sigma$. Suppose that $A \in \mathrm{SL}(2,\mathbb{R})$ and $f \in C^\omega_r(\mathbb{T}^d,\mathrm{sl}(2,\mathbb{R}))$. Then for any $r' \in (0,r)$, there exist constants $c=c(\kappa,\tau,d,\sigma,D)$ and $\tilde{D}=\tilde{D}(D)>0$ such that if 
     \begin{equation}\label{pertur}
         |f|_r \leqslant \epsilon \leqslant \frac{c}{\|A\|^{\tilde{D}}}(r-r')^{D\tau}, 
     \end{equation}
     then there exist $B \in C^{\omega}_{r'}(2\mathbb{T}^d,\mathrm{SL}(2,\mathbb{R}))$, $A_+ \in \mathrm{SL}(2,\mathbb{R})$ and $f_+ \in C^\omega_{r'}(\mathbb{T}^d,\mathrm{sl}(2,\mathbb{R}))$ such that 
     \begin{equation*}
         B^{-1}(x+\alpha)Ae^{f(x)}B(x)=A_+e^{f_+(x)}. 
     \end{equation*}
     More precisely, let $N=\frac{2}{r-r'}|\ln{\epsilon}|$,  we distinguish two case:
     \begin{enumerate}[label=(\Alph*)]
		\item (Non-resonant case) if for any $n \in \mathbb{Z}^d$ with $0<|n| \leqslant N$, we have 
        \begin{equation}\label{Dio1}
            \|2\rho(A)-\langle n,\alpha \rangle\|_{\mathbb{R}/\mathbb{Z}} \geqslant \epsilon^\sigma, 
        \end{equation}
        then
        \begin{equation*}
            \begin{split}
                & |B-\mathrm{Id}|_{r'} \leqslant \epsilon^{-(2\sigma+3/D)}|f|_r \leqslant \epsilon^{1-2\sigma-3/D}, \\ 
                & |f_+|_{r'} \leqslant \epsilon^{2-2\sigma-3/D}, \quad \|A_{+}- A\| \leqslant 2\|A\|\epsilon.
            \end{split} 
        \end{equation*}

        \item (Resonant case) if there exists $n^*$ with $0<|n^*| \leqslant N$ such that 
        \begin{equation*}
            \|2\rho(A)-\langle n^*,\alpha \rangle\|_{\mathbb{R}/\mathbb{Z}}<\epsilon^\sigma, 
        \end{equation*}
        then
        \begin{equation*}
            B(x)=P^{-1} \cdot e^{Y(x)} \cdot R_{\langle n^*,x \rangle/2}
        \end{equation*}
        where $P \in \mathrm{SL}(2,\mathbb{R}), Y \in C_{r'}^{\omega}(\mathbb{T}^d,\mathrm{sl}(2,\mathbb{R}))$ and $R_{\langle n^*,x \rangle/2}$ is the rotation matrix satisfy 
        \begin{equation*}
            \begin{split}
                & \qquad \|P\| \leqslant 4\|A\|^{1/2}\kappa^{-1/2}|n^*|^{\tau/2}, \quad |Y|_{r'} \leqslant 4\|A\|^{1/2}\kappa^{-1/2}|n^*|^{\tau/2} \epsilon^{1/2}, 
            \end{split}
        \end{equation*}
        and
        \begin{equation*}
            \begin{split}
                & \qquad |B|_{r'} \leqslant 8\|A\|^{1/2}\kappa^{-1/2}|n^{*}|^{\tau/2} e^{\pi|n^{*}|r'}, \\ 
                & \qquad \|B\|_0 \leqslant 8\|A\|^{1/2}\kappa^{-1/2}|n^{*}|^{\tau/2}, \quad |f_+|_{r'} \leqslant \epsilon^{100}.
            \end{split}
        \end{equation*}
    Moreover, there exist $t \in \mathbb{R}$ and $\nu \in \mathbb{C}$ such that
     \begin{equation}\label{KAMnormalform}
        A_{+}=M^{-1}\exp
            \begin{pmatrix}
                it & \nu \\
                \bar{\nu} & -it
            \end{pmatrix}M,
    \end{equation}
    with  
    \begin{equation*}
        \qquad \|2\rho(A_{+})\|_{\mathbb{R}/\mathbb{Z}} \leqslant 4\epsilon^{\sigma}, \quad |t| \leqslant 4\epsilon^\sigma, \quad |\nu| \leqslant 64\|A\|\kappa^{-1}|n^*|^\tau\epsilon e^{-2\pi|n^*|r}.
    \end{equation*} 
    \end{enumerate}
\end{theorem}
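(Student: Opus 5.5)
The plan is to run a single KAM step in the style of \cite{MR4512234}, with the usual dichotomy between non-resonant and resonant cases. First reduce $A$ to a normal form. By a constant conjugacy $P\in\mathrm{SL}(2,\mathbb{R})$, transform $A$ so that $MAM^{-1}$ is either diagonal-elliptic, $\exp\begin{pmatrix} i\xi & 0\\ 0&-i\xi\end{pmatrix}$ with $\xi=2\pi\rho(A)$, or a small hyperbolic/parabolic normal form. Then pass to the $\mathrm{su}(1,1)$ coordinates and write the perturbation $MfM^{-1}$ as $\begin{pmatrix} i f_3 & f_1+if_2\\ f_1-if_2 & -if_3\end{pmatrix}$. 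Truncate the Fourier series of $f$ at order $N=\frac{2}{r-r'}|\ln\epsilon|$. The tail satisfies $|\mathcal{T}_{>N}f|_{r'}\le \epsilon\, e^{-2\pi N(r-r')}\lesssim\epsilon^{2}$ (up to $(r-r')^{-d}$ factors, which are absorbed using \eqref{pertur}), so it can be put into $f_+$.

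In the non-resonant case \eqref{Dio1}, I will solve the linearized homological equation $Y(x+\alpha)A-AY(x)=A\,\mathcal{T}_N f(x)-A\,[\hat f(0)]$ mode by mode. In the diagonal entry the divisors are $e^{2\pi i\langle n,\alpha\rangle}-1$, which are bounded below by $\kappa|n|^{-\tau}\ge\kappa N^{-\tau}$. In the off-diagonal entries the divisors are $e^{2\pi i\langle n,\alpha\rangle}-e^{\pm 4\pi i\rho(A)}$, which by \eqref{Dio1} are at least $\epsilon^{\sigma}$. Since $N^{\tau}\le\epsilon^{-1/D}$ up to a constant by \eqref{pertur}, this gives $|Y|_{r'}\le\epsilon^{-\sigma-1/D}\,\|P\|^2\,|f|_r$. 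The condition \eqref{pertur}, through the factor $\|A\|^{\tilde D}$, absorbs the norm of the normalizing conjugacy, so that $|B-\mathrm{Id}|_{r'}\le\epsilon^{-(2\sigma+3/D)}|f|_r$. Setting $B=e^{Y}$ and $A_+=Ae^{\hat f(0)}$, the remainder is quadratic, so $|f_+|_{r'}\le\epsilon^{2-2\sigma-3/D}$, and $\|A_+-A\|\le 2\|A\|\epsilon$.

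In the resonant case, I first check that the resonant site $n^*$ is unique in $0<|n|\le N$. If $n\ne n^*$ were also resonant, then $\|\langle n-n^*,\alpha\rangle\|<2\epsilon^\sigma$, which contradicts $\kappa(2N)^{-\tau}\gg\epsilon^{\sigma}$ since $\sigma D>2$. The resonance also gives $|\rho(A)|\gtrsim\kappa|n^*|^{-\tau}$, so $A$ is elliptic. Its conjugation to a rotation costs $\|P\|\le4\|A\|^{1/2}\kappa^{-1/2}|n^*|^{\tau/2}$, because the rotation-to-norm ratio controls the eccentricity. I will then do the non-resonant elimination for all modes except the off-diagonal resonant modes $\pm n^*$, which gives $e^{Y}$ with $|Y|_{r'}\lesssim\|P\||n^*|^{\tau/2}\epsilon^{1/2}$. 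Next I apply $R_{\langle n^*,x\rangle/2}$, which is diagonal in $\mathrm{su}(1,1)$ coordinates. It turns the resonant terms $\hat f(\pm n^*)e^{\pm2\pi i\langle n^*,x\rangle}$ into constants and shifts $\rho$ by $\langle n^*,\alpha\rangle/2$, so that $\|2\rho(A_+)\|\le 4\epsilon^\sigma$ and $|t|\le 4\epsilon^\sigma$. The new off-diagonal constant is $\nu\approx\hat f_{12}(n^*)$, which is bounded by $|f|_r e^{-2\pi|n^*|r}$ times $\|P\|^2$; this gives the stated estimate on $|\nu|$. The growth $|B|_{r'}\le 8\|A\|^{1/2}\kappa^{-1/2}|n^*|^{\tau/2}e^{\pi|n^*|r'}$ comes from the rotation, and $\deg B=n^*$.

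The main obstacle is the estimate $|f_+|_{r'}\le\epsilon^{100}$ in the resonant case. After the rotation, the new cocycle sits near a constant with tiny rotation number, and the remaining perturbation is only $O(\epsilon^{2-\cdots})$. To bring it down to $\epsilon^{100}$, I would iterate finitely many (about $\log_{2}100$) further non-resonant sub-steps on a shrinking sequence of strips between $r$ and $r'$, each time with cutoff comparable to $N$. I need to check that none of them is resonant: the post-rotation rotation number is at most $\epsilon^\sigma$, which is far from every $\langle n,\alpha\rangle$ with $0<|n|\le N$ by the Diophantine condition. The inflated $e^{\pi|n^*|r'}$ factor must stay absorbed by the strip loss, and this is where $D>2/\sigma$ and the choice of $\tilde D$ are used. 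I expect this bookkeeping of the strip widths and of the repeated small divisors to be the delicate point. I would try a geometric splitting of $r-r'$ into about $\log_{2}100$ pieces, with each sub-step quadratically improving the error.
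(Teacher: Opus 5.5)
Your proposal has a genuine gap in the non-resonant case, which is the only case the paper actually proves. The problem is how you normalize $A$. You conjugate $A$ by a constant $P$ to a diagonal elliptic form, so that the divisors become $e^{2\pi i\langle n,\alpha\rangle}-1$ and $e^{2\pi i\langle n,\alpha\rangle}-e^{\pm 4\pi i\rho(A)}$. You then claim that the factor $\|A\|^{\tilde D}$ in \eqref{pertur} absorbs $\|P\|^2$. That is false, because $\|P\|$ is not controlled by $\|A\|$:
\begin{itemize}
\item Take $A=\mathrm{diag}(\mu,\mu^{-1})\,R_{\rho}\,\mathrm{diag}(\mu^{-1},\mu)$ with $\mu^{2}|\sin 2\pi\rho|=1$. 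Then $\|A\|\leqslant 2$, yet every conjugacy of $A$ to $R_\rho$ in $\mathrm{SL}(2,\mathbb{R})$ has norm $\mu=|\sin 2\pi\rho|^{-1/2}$.
\item A parabolic $A$ cannot be diagonalized at all.
\end{itemize}
The bound you use correctly in the resonant case, $\|P\|^2\lesssim\|A\|/|\sin 2\pi\rho(A)|$, helps there only because resonance forces $|\rho(A)|\gtrsim\kappa|n^*|^{-\tau}$. Hypothesis \eqref{Dio1} involves only $0<|n|\leqslant N$, so it gives no lower bound on $|\rho(A)|$; even $\rho(A)=0$ is allowed. This is not a corner case. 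In Theorem~\ref{Ckalmost}, the non-resonant steps after a resonant step are applied to $A_{l_j}$ with $\|2\rho(A_{l_j})\|_{\mathbb{R}/\mathbb{Z}}\leqslant 4\epsilon_{l_j}^{\sigma}$, i.e.\ to nearly parabolic matrices.

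The paper avoids this with Schur's theorem. A unitary $U\in\mathrm{SU}(2)$, which costs nothing in norm, brings $A$ to an upper triangular $\Lambda$ with diagonal entries $e^{\pm 2\pi i\rho}$ and corner entry $|p|\leqslant\|A\|$, uniformly in $\rho$. The homological equation then becomes a triangular system, solved in order:
\begin{itemize}
\item $\hat y_3$, with divisor $e^{2\pi i(\langle k,\alpha\rangle+2\rho)}-1$;
\item $\hat y_1$, with divisor $e^{2\pi i\langle k,\alpha\rangle}-1$ and source containing $p\hat y_3$;
\item $\hat y_2$, with divisor $e^{2\pi i(\langle k,\alpha\rangle-2\rho)}-1$ and source containing $p^2\hat y_3$ and $p\hat y_1$.
\end{itemize}
Only powers of $\|A\|$ and $N^\tau$ must then be absorbed by \eqref{pertur}. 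The product of two $\epsilon^{-\sigma}$ divisors is why the statement has $\epsilon^{-(2\sigma+3/D)}$ rather than the $\epsilon^{-\sigma-\cdots}$ your estimate produces. Diagonalization is harmless only when $|\sin 2\pi\rho(A)|$ is bounded below by a power of $\epsilon$, so some triangular treatment of the near-parabolic regime is unavoidable.

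Your resonant step has a second problem, in the order of operations; the paper does not prove this case but takes it from \cite{MR4512234}. You rotate first, then run finitely many non-resonant sub-steps with cutoffs comparable to $N$. Conjugating a perturbation by $R_{\langle n^*,x\rangle/2}$ shifts its off-diagonal Fourier modes by $\pm n^*$. This can multiply its sup-norm on $\mathcal{S}_{r'}$ by $e^{2\pi|n^*|r'}$, which for $|n^*|$ near $N=\frac{2}{r-r'}|\ln\epsilon|$ is of order $\epsilon^{-4\pi r'/(r-r')}$. For general $r'\in(0,r)$ this is not bounded by any fixed power of $\epsilon^{-1}$, since \eqref{pertur} only gives $r'/(r-r')\lesssim\epsilon^{-1/(D\tau)}$. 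So an $O(\epsilon^{2-\cdots})$ remainder left before rotating need not be small afterwards. Neither $\tilde D$ nor a strip loss $(r-r')/K$ with cutoffs of order $N$ can compensate.

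The standard remedy is to eliminate all non-resonant modes before rotating, and far beyond $N$:
\begin{itemize}
\item Uniqueness of $n^*$ and $\alpha\in\mathrm{DC}_d(\kappa,\tau)$ give $\|2\rho(A)-\langle n,\alpha\rangle\|_{\mathbb{R}/\mathbb{Z}}\geqslant\epsilon^{\sigma}$ for all $n\neq n^*$ with $|n|\lesssim\epsilon^{-\sigma/\tau}$, so one can eliminate up to a cutoff $N'\sim\epsilon^{-\sigma/\tau}$.
\item What survives is the constant term, the modes $\pm n^*$ (which the rotation makes constant), and a tail of size $\epsilon e^{-2\pi N'(r-r')}$.
\item Since $N'(r-r')^2\gtrsim\epsilon^{-(\sigma-2/D)/\tau}$, a positive power of $\epsilon^{-1}$ exactly because $D>2/\sigma$, the tail beats $e^{2\pi|n^*|r'}$ and yields $|f_+|_{r'}\leqslant\epsilon^{100}$.
\end{itemize}
Finally, any post-rotation sub-step would again hit the near-parabolic issue above, since $|\rho(A_+)|\leqslant 2\epsilon^{\sigma}$.
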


\begin{proof}
The resonant case follows from the argument in \cite[Appendix]{MR4512234}, so we only prove the non-resonant case. 

The hyperbolic case is standard, so we give the details for the non-hyperbolic case. Then we can choose $\rho \in \mathbb{R}$ such that
\begin{equation*}
    \rho=\rho(A) \bmod \mathbb{Z}, \quad \mathrm{spec}(A)=\{e^{2\pi i\rho},e^{-2\pi i\rho}\}.
\end{equation*}
Define the truncated operator $\mathcal{T}_{N}$ and residual operator $\mathcal{R}_{N}$ by 
\begin{equation*}
    (\mathcal{T}_{N}f)(x)=\sum_{|k| \leqslant N} \hat{f}(k)e^{2\pi i\langle k, x\rangle}, \quad (\mathcal{R}_{N}f)(x)=\sum_{|k|>N} \hat{f}(k)e^{2\pi i\langle k, x\rangle}.
\end{equation*}

{\bf Step 1: Upper triangularization.}

By Schur's Theorem, there exists $U \in \mathrm{SU}(2)$ such that 
\begin{equation*}
    UAU^{-1}=\begin{pmatrix}
    e^{2\pi i\rho} & p \\ 
    0 & e^{-2\pi i\rho}
    \end{pmatrix} \eqcolon \Lambda. 
\end{equation*}

{\bf Step 2: Solve truncated equation.}

Consider the linearized cohomological equation
    \begin{equation}\label{cohomo}
        A^{-1}Y(x+\alpha)A-Y(x)=(\mathcal{T}_{N}f)(x)-\hat{f}(0).  
    \end{equation}
To solve $Y$, we conjugate the equation to the form
\begin{equation}\label{complex}
    \Lambda^{-1} UY(x+\alpha) U^{-1}\Lambda- UY(x)U^{-1}= U((\mathcal{T}_{N}f)(x)-\hat{f}(0))U^{-1} \eqcolon g(x). 
\end{equation}
We let $UYU^{-1}=\begin{pmatrix}
	    	y_1 & y_2 \\
	    	y_3 & y_4 \\
	    \end{pmatrix}$ and $g=\begin{pmatrix}
	    	g_1 & g_2 \\
	    	g_3 & g_4 \\
	    \end{pmatrix}$,
comparing the Fourier coefficients, one obtains that
\begin{equation}\label{hatyi}
    \begin{split}
        &\hat{y}_{3}(k)=\frac{\hat{g}_3(k)}{e^{2\pi i(\langle k,\alpha\rangle+2\rho)}-1}, \\
        &\hat{y}_{1}(k)=-\hat{y}_{4}(k)=\frac{\hat{g}_1(k)+pe^{2\pi i(\langle k,\alpha \rangle+\rho)}\hat{y}_3(k)}{e^{2\pi i\langle k,\alpha\rangle}-1}, \\ 
        & \hat{y}_{2}(k)=\frac{\hat{g}_2(k)+p^2e^{2\pi i\langle k,\alpha \rangle}\hat{y}_3(k)-2pe^{2\pi i(\langle k,\alpha \rangle-\rho)}\hat{y}_1(k)}{e^{2\pi i(\langle k,\alpha \rangle-2\rho)}-1}. 
    \end{split}
\end{equation}

By \eqref{pertur} and $D>2/\sigma$, for any $0<|n|\leqslant N$ we have 
    \begin{equation}\label{Dio2}
        \|\langle n,\alpha \rangle\|_{\mathbb{R}/\mathbb{Z}} \geqslant \frac{\kappa}{|n|^\tau} \geqslant \frac{\kappa}{N^\tau}.  
    \end{equation}
Use \eqref{Dio1} and \eqref{Dio2}, we can get 
\begin{equation*}
    \begin{split}
        & \frac{1}{|e^{2\pi i(\langle k,\alpha \rangle \pm 2\rho)}-1|}=\frac{1}{2|\sin(\pi (\langle k,\alpha \rangle \pm 2\rho))|} \leqslant \frac{1}{4\|2\rho \pm \langle k,\alpha \rangle\|_{\mathbb{R}/\mathbb{Z}}} \leqslant \frac{1}{4}\epsilon^{-\sigma}, \\ 
        & \frac{1}{|e^{2\pi i\langle k,\alpha \rangle}-1|}=\frac{1}{2|\sin(\pi \langle k,\alpha \rangle)|} \leqslant \frac{1}{4\|\langle k,\alpha \rangle\|_{\mathbb{R}/\mathbb{Z}}} \leqslant \frac{1}{4\kappa}N^\tau. 
    \end{split}
\end{equation*}
This yields, 
\begin{equation*}
    \begin{split}
        |y_3|_{r'} & =|\sum_{k\in \mathbb{Z}^d,0<|k|\leqslant N} \hat{y}_3(k)e^{2\pi i\langle k,x \rangle}|_{r'}=|\sum_{k\in \mathbb{Z}^d,0<|k|\leqslant N} \frac{\hat{g}_3(k)}{e^{2\pi i(\langle k,\alpha\rangle+2\rho)}-1}e^{2\pi i\langle k,x \rangle}|_{r'} \\ 
        & \leqslant \frac{1}{4}\epsilon^{-\sigma}\sum_{k\in \mathbb{Z}^d,0<|k|\leqslant N} |\hat{g}_3(k)e^{2\pi i\langle k,x \rangle}|_{r'} \leqslant \frac{1}{4}\epsilon^{-\sigma}|g|_r \sum_{k\in \mathbb{Z}^d,0<|k|\leqslant N} e^{-2\pi|k|(r-r')}, \\ 
        |y_4|_{r'} & =|y_1|_{r'}=|\sum_{k\in \mathbb{Z}^d,0<|k|\leqslant N} \frac{\hat{g}_1(k)+pe^{2\pi i(\langle k,\alpha \rangle+\rho)}\hat{y}_3(k)}{e^{2\pi i\langle k,\alpha\rangle}-1}e^{2\pi i\langle k,x \rangle}|_{r'} \\ 
        & \leqslant \frac{1}{4\kappa}N^\tau \sum_{k\in \mathbb{Z}^d,0<|k|\leqslant N} (|\hat{g}_1(k)|+|pe^{2\pi i(\langle k,\alpha \rangle+\rho)}\hat{y}_3(k)|)|e^{2\pi i\langle k,x \rangle}|_{r'} \\ 
        & \leqslant \frac{1}{4\kappa}N^{\tau}\epsilon^{-\sigma}|g|_r(1+|p||e^{2\pi i\rho}|) \sum_{k\in \mathbb{Z}^d,0<|k|\leqslant N} e^{-2\pi|k|(r-r')}, \\ 
    \end{split}
\end{equation*}
\begin{equation*}
    \begin{split}        
        |y_2|_{r'} & \leqslant |\sum_{k\in \mathbb{Z}^d,0<|k|\leqslant N}  \frac{\hat{g}_2(k)+p^2e^{2\pi i\langle k,\alpha \rangle}\hat{y}_3(k)-2pe^{2\pi i(\langle k,\alpha \rangle-\rho)}\hat{y}_1(k)}{e^{2\pi i(\langle k,\alpha \rangle-2\rho)}-1}e^{2\pi i\langle k,x \rangle}|_{r'} \\ 
        & \leqslant \frac{1}{16\kappa}N^\tau\epsilon^{-2\sigma}|g|_r(1+|p|^2+2|p||e^{-2\pi i\rho}|+2|p|^2) \sum_{k\in \mathbb{Z}^d,0<|k|\leqslant N} e^{-2\pi|k|(r-r')}. \\ 
    \end{split}
\end{equation*}
Then for $y_j, j=1,2,3,4$, by integrating by parts, we have 
\begin{equation*}
    \begin{split}
        |y_j|_{r'} & \leqslant \frac{1}{4\kappa}N^\tau\epsilon^{-2\sigma}|g|_r(1+|p|^2+2|p||e^{-2\pi i\rho}|+2|p|^2) \sum_{k\in \mathbb{Z}^d,0<|k|\leqslant N} e^{-2\pi|k|(r-r')} \\ 
        & \leqslant \frac{1}{4\kappa}N^\tau\epsilon^{-2\sigma}|g|_r6\|A\|^2 \sum_{k\in \mathbb{Z}^d,0<|k|\leqslant N} e^{-2\pi|k|(r-r')} \\ 
        & \leqslant \frac{1}{4}\epsilon^{-2(\sigma+1/D)}|g|_r, 
    \end{split}
\end{equation*}
Therefore, 
\begin{equation}\label{anaY}
    |Y|_{r'}=|UYU^{-1}|_{r'} \leqslant \epsilon^{-2(\sigma+1/D)}|g|_r \leqslant \epsilon^{-2(\sigma+1/D)}|\mathcal{T}_{N}f-\hat{f}(0)|_r.  
\end{equation} 
And we have $Y \in C^{\omega}_{r'}(\mathbb{T}^d,\mathrm{sl}(2,\mathbb{R}))$. Firstly, \eqref{hatyi} indicates that $\mathrm{tr}Y=\mathrm{tr}(UYU^{-1})=0$. Then, for $k, 0<|k| \leqslant N$, equation \eqref{cohomo} induces a linear equation 
\begin{equation*}
    M_k(\hat{Y}(k)) \coloneq A^{-1}\hat{Y}(k)e^{2\pi i\langle k,\alpha \rangle}A-\hat{Y}(k)=\widehat{\mathcal{T}_Nf}(k). 
\end{equation*}
By \eqref{Dio1} and \eqref{Dio2}, the operator $M_k \colon \mathrm{sl}(2,\mathbb{C}) \rightarrow \mathrm{sl}(2,\mathbb{C})$ is invertible, then there exists a unique solution $Y(x)=\sum_{k \in \mathbb{Z}^d, 0<|k| \leqslant N}\hat{Y}(k)e^{2\pi i\langle k,x \rangle} \in C^{\omega}_{r'}(\mathbb{T}^d,\mathrm{sl}(2,\mathbb{C}))$. On the other hand, the original operator
\begin{equation*}
    \mathcal{L}Y(x)=A^{-1}Y(x+\alpha)A-Y(x)=\mathcal{T}_{N}f-\hat{f}(0)
\end{equation*}
preserves real-valued trigonometric polynomials. Since $\mathcal{T}_{N}f-\hat{f}(0) \in C^{\omega}_{r}(\mathbb{T}^d,\mathrm{sl}(2,\mathbb{R}))$, the solution $Y$ must satisfy $\hat{Y}(-k)=\overline{\hat{Y}(k)}$, then $Y \in C^{\omega}_{r'}(\mathbb{T}^d,\mathrm{sl}(2,\mathbb{R}))$.  

By cohomological equation \eqref{cohomo}, we obtain 
\begin{equation*}
    -Y(x+\alpha)=-AY(x)A^{-1}-A((\mathcal{T}_{N}f)(x)-\hat{f}(0))A^{-1}.
\end{equation*}
Then this yields, 
\begin{equation*}
    \begin{split}
        e^{-Y(x+\alpha)}(Ae^{f(x)})e^{Y(x)} 
	    = & \ e^{-AY(x)A^{-1}-A((\mathcal{T}_{N}f)(x)-\hat{f}(0))A^{-1}}(Ae^{f(x)})e^{Y(x)} \\
	    = & \ Ae^{\hat{f}(0)-(\mathcal{T}_Nf)(x)-Y(x)}e^{f(x)}e^{Y(x)} \\
	    = & \ Ae^{\hat{f}(0)} \left[e^{-\hat{f}(0)}e^{\hat{f}(0)-(\mathcal{T}_Nf)(x)-Y(x)}e^{f(x)}e^{Y(x)}\right] \\ 
	    = & \ A_+e^{f_+(x)}.
    \end{split} 
\end{equation*}
Therefore, we define 
\begin{equation*}
	B=e^{Y}, \quad A_+=Ae^{\hat{f}(0)}, 
\end{equation*}
and 
\begin{equation*}
    f_+ =\ln(e^{-\hat{f}(0)}e^{\hat{f}(0)-(\mathcal{T}_Nf)(x)-Y(x)}e^{f(x)}e^{Y(x)}).
\end{equation*}

{\bf Step 3: Estimate residual terms.}

\begin{Lemma}\label{ert}
    Let $r' \in (0,r)$, $f \in C^{\omega}_r(\mathbb{T}^d,*)$ and let $f_{N}$ be the truncated Fourier series of $f$ at order $N$. We get 
    \begin{equation*}
        |f_N-f|_{r'} \leqslant \tilde{c}|f|_re^{-2\pi N(r-r')}(N+\frac{1}{r-r'})^d.  
    \end{equation*}
    Here, the constant $\tilde{c}$ is independent of $r,r',N$. 
\end{Lemma}
\begin{proof}
    By directly calculation, we have 
    \begin{equation*}
        \begin{split}
            |(f_N-f)(x)|_{r'} & =|\sum_{k\in \mathbb{Z}^d, |k|>N} \hat{f}(k)e^{2\pi i\langle k,x \rangle}|_{r'} \leqslant \sum_{k\in \mathbb{Z}^d, |k|>N} |\hat{f}(k)||e^{2\pi i\langle k,x \rangle}|_{r'} \\ 
            & \leqslant |f|_r \sum_{k\in \mathbb{Z}^d, |k|>N} e^{-2\pi|k|(r-r')} \leqslant C(d)|f|_r \int_N^{+\infty} l^{d-1}e^{-2\pi l(r-r')} dl. 
        \end{split}
    \end{equation*}
    Using integration by parts, this yields
    \begin{equation*}
        \int_N^{+\infty} l^{d-1}e^{-2\pi l(r-r')} dl=e^{-2\pi N(r-r')}\sum_{j=0}^{d-1}\frac{N^j}{(2\pi)^{d-j}(r-r')^{d-j}}\frac{(d-1)!}{j!}, 
    \end{equation*}
    and 
    \begin{equation*}
        \sum_{j=0}^{d-1}\frac{N^j}{(2\pi)^{d-j}(r-r')^{d-j}}\frac{(d-1)!}{j!} \leqslant C'(d)(N+\frac{1}{r-r'})^d. 
    \end{equation*}
    Consequently, we can get 
    \begin{equation*}
        |f_N-f|_{r'} \leqslant \tilde{c}|f|_re^{-2\pi N(r-r')}(N+\frac{1}{r-r'})^d, 
    \end{equation*}
    where the constant $\tilde{c}=\tilde{c}(d)$.  
\end{proof}

By \eqref{anaY}, we have the following estimates 
\begin{equation*}
    \begin{split}
        & |(\mathcal{T}_{N}f)(x)-\hat{f}(0)|_{r} \leqslant |\sum_{k \in \mathbb{Z}^d,0<|k|\leqslant N}\hat{f}(k)e^{2\pi i\langle k,x \rangle}|_{r} \leqslant \sum_{k \in \mathbb{Z}^d,0<|k|\leqslant N}|\hat{f}(k)||e^{2\pi i\langle k,x \rangle}|_{r} \\ 
        & \qquad \qquad \qquad \qquad \leqslant |f|_r\sum_{k \in \mathbb{Z}^d,0<|k|\leqslant N} e^{-2\pi|k|r}|e^{2\pi i\langle k,x \rangle}|_{r} \lesssim N^d|f|_r, \\  
        & |Y|_{r'} \leqslant \epsilon^{-2(\sigma+1/D)}|\mathcal{T}_{N}f-\hat{f}(0)|_{r} \lesssim \epsilon^{-2(\sigma+1/D)}N^d|f|_r \leqslant \frac{1}{2}\epsilon^{-(2\sigma+3/D)}|f|_r. 
    \end{split}
\end{equation*}
Since $N(r-r')=2|\ln\epsilon|$, by Lemma~\ref{ert}, one can get 
\begin{equation*}
    |\mathcal{R}_Nf|_{r'} \leqslant \tilde{c}|f|_re^{-2\pi N(r-r')}(N+\frac{1}{r-r'})^d \lesssim |f|_r e^{-2\pi N(r-r')}N^{d} \lesssim \epsilon^{4\pi}N^{d}|f|_r. 
\end{equation*}
Therefore,     
\begin{equation*}
    \begin{split}
        & |B-\mathrm{Id}|_{r'} \leqslant 2|Y|_{r'} \leqslant \epsilon^{-(2\sigma+3/D)}|f|_r \leqslant \epsilon^{1-2\sigma-3/D}, \\
        & \|A_+-A\| \leqslant \|A\|\|\mathrm{Id}-e^{\hat{f}(0)}\| \leqslant 2\|A\|\epsilon,  
    \end{split}
\end{equation*}
and by Baker-Campbell-Hausdorff formula,  
\begin{equation*}
    \begin{split}
         |f_+|_{r'} & =|\ln(e^{-\hat{f}(0)}e^{\hat{f}(0)-(\mathcal{T}_Nf)(x)-Y(x)}e^{f(x)}e^{Y(x)})|_{r'} \\ 
         & \lesssim |\mathcal{R}_{N}f+\frac{1}{2}([\hat{f}(0),\mathcal{T}_{N}f]+[\hat{f}(0),Y]-[\mathcal{T}_{N}f,f]+2[f,Y]-[\mathcal{T}_{N}f,Y])|_{r'} \\ 
         & \qquad+\text{ higher order terms} \\
         & \lesssim |\mathcal{R}_Nf|_{r'}+|f(\mathcal{T}_Nf)|_{r'}+|fY|_{r'}+|(\mathcal{R}_Nf)Y|_{r'} \\ 
         & \lesssim \epsilon^{4\pi}N^{d}|f|_r+|f|_{r'}N^d|f|_r+|f|_{r'}\epsilon^{-2(\sigma+1/D)}N^d|f|_r+\epsilon^{4\pi}N^d|f|_r\epsilon^{-(2\sigma+3/D)}|f|_r \\
         & \leqslant \epsilon^{2-2\sigma-3/D} . 
    \end{split}
\end{equation*}

This finishes the proof of Theorem \ref{KAM}.
\end{proof}

	\bibliographystyle{abbrv}
	\bibliography{ref}
\end{document}